\documentclass[11pt]{article}
\usepackage{amsmath, amsthm, amsfonts}
\usepackage{latexsym}
\usepackage{graphicx, psfrag}
\setlength{\textwidth}{6.3in}
\setlength{\textheight}{8.7in}
\setlength{\topmargin}{0pt}
\setlength{\headsep}{0pt}
\setlength{\headheight}{0pt}
\setlength{\oddsidemargin}{0pt}
\setlength{\evensidemargin}{0pt}

\makeatletter
\newfont{\footsc}{cmcsc10 at 8truept}
\newfont{\footbf}{cmbx10 at 8truept}
\newfont{\footrm}{cmr10 at 10truept}
\makeatother
\pagestyle{plain}

\newtheorem{theorem}{\bf Theorem}
\newtheorem{proposition}{\bf Proposition}
\newtheorem{lemma}{\bf Lemma}
\newtheorem{definition}{\bf Definition}
\newtheorem{conjecture}{\bf Conjecture}
\newtheorem{corollary}{\bf Corollary}

\begin{document}
\title{On the Unique Crossing Conjecture of Diaconis and Perlman on Convolutions of Gamma Random Variables}

\author{Yaming Yu\\
\small Department of Statistics\\[-0.8ex]
\small University of California\\[-0.8ex] 
\small Irvine, CA 92697, USA\\[-0.8ex]
\small \texttt{yamingy@uci.edu}}

\date{}
\maketitle

\begin{abstract}
Diaconis and Perlman (1990) conjecture that the distribution functions of two weighted sums of iid gamma random variables cross exactly once if one weight vector majorizes the other.  We disprove this conjecture when the shape parameter of the gamma variates is $\alpha <1$ and prove it when $\alpha\geq 1$.

{\bf Keywords}: convolution; log-concavity; majorization; tail probability; total positivity; unimodality.
\end{abstract}

\section{Introduction}
Let $X_1,\ldots, X_n$ be independent and identically distributed (iid) gamma($\alpha, 1$) random variables and denote the distribution function for $\sum_{i=1}^n \theta_i X_i$ by $F_\theta$ where $\theta \equiv (\theta_1,\ldots, \theta_n)$ is a nonnegative weight vector.  Diaconis and Perlman (1990) made the following 
\begin{conjecture}
\label{conj1}
If $\eta\prec \theta$ (Marshall, Olkin and Arnold 2009), but $\theta$ is not a permutation of $\eta$, then $F_\eta(x) -F_\theta(x)$ changes signs exactly once, from $-$ to $+$, as $x$ increases from $0$ to $\infty$.  
 \end{conjecture}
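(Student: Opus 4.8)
The plan is to recast the single-crossing statement for the distribution functions as a sign-change count for the underlying densities, and then to build up the number of components by convolution. Write $G(x)=F_\eta(x)-F_\theta(x)$. Since $\eta\prec\theta$ forces $\sum_i\eta_i=\sum_i\theta_i$ and hence $E\sum_i\eta_iX_i=E\sum_i\theta_iX_i$, one has the two boundary facts $G(0)=G(\infty)=0$ together with $\int_0^\infty G(x)\,dx=0$. Because $G(x)=\int_0^x(f_\eta-f_\theta)(t)\,dt$ under these constraints, it suffices to show that the density difference $f_\eta-f_\theta$ changes sign exactly twice, in the pattern $-,+,-$: an elementary argument then shows $G$ is negative just to the right of $0$, increases through zero exactly once, and returns to $0$ from above, i.e. crosses once from $-$ to $+$. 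To fix the two extreme signs I would examine the ratio $f_\eta/f_\theta$ at the endpoints. As $x\to0^+$ a gamma convolution density behaves like a constant times $x^{n\alpha-1}$ with constant proportional to $(\prod_i\theta_i)^{-\alpha}$; strict Schur-concavity of $\sum_i\log\theta_i$ (using that $\theta$ is not a permutation of $\eta$) gives $\prod_i\theta_i<\prod_i\eta_i$, so the ratio tends to a limit below $1$. As $x\to\infty$ the tail is governed by the largest weight, and $\max_i\eta_i\le\max_i\theta_i$ forces $f_\eta/f_\theta\to0$. Thus both extreme signs are $-$, and the whole problem reduces to the bound $S^{-}(f_\eta-f_\theta)\le 2$, where $S^{-}$ counts sign changes.

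The engine for building up components is the variation-diminishing property of convolution with a log-concave (Pólya-frequency of order two, $\mathrm{PF}_2$) density, available precisely because $\alpha\ge1$ makes $\mathrm{gamma}(\alpha,1)$---and hence, by closure of log-concavity under convolution and scaling, every partial sum $\sum_{i\in S}\theta_iX_i$---log-concave. To exploit this cleanly I would work one step at the distribution-function level, where only a single sign change is at issue and $\mathrm{PF}_2$ already suffices. Concretely, suppose first that $\eta$ and $\theta$ differ in only two coordinates, say the first two, and let $H_{a,b}$ denote the distribution function of $aX_1+bX_2$. Then $F_\eta-F_\theta=(H_{\eta_1,\eta_2}-H_{\theta_1,\theta_2})*w$, where $w$ is the common log-concave density of the untouched remainder $\sum_{i\ge3}\theta_iX_i$. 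Since the kernel $w(x-\cdot)$ is totally positive of order two and a single-crossing function has but one sign change, the variation-diminishing property yields $S^{-}\big((H_{\eta_1,\eta_2}-H_{\theta_1,\theta_2})*w\big)\le S^{-}(H_{\eta_1,\eta_2}-H_{\theta_1,\theta_2})$ with the sign pattern preserved. It therefore remains to prove the two-variable instance of the conjecture: for $(\eta_1,\eta_2)\prec(\theta_1,\theta_2)$, the difference $H_{\eta_1,\eta_2}-H_{\theta_1,\theta_2}$ crosses zero once from $-$ to $+$. This base case I would settle by the density criterion of the first paragraph, using the explicit form of the density of $\theta_1X_1+\theta_2X_2$ (a confluent hypergeometric factor times $x^{2\alpha-1}e^{-x/\theta_1}$) together with $\alpha\ge1$ to confirm the $-,+,-$ pattern.

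The hard part will be passing from two-coordinate transfers to an arbitrary pair $\eta\prec\theta$. Majorization is realized by a finite sequence of two-coordinate (Robin Hood) $T$-transforms, but single crossing is neither transitive nor additive: the sum of two functions that each cross once from $-$ to $+$ may cross several times, so the clean two-coordinate result of the previous paragraph does not simply chain. I expect this globalization to be the crux. The resolution I would pursue is a genuinely global total-positivity argument giving $S^{-}(f_\eta-f_\theta)\le2$ directly for the full majorization---for instance an induction on the number of unequal coordinates in which the convolution reduction is combined with explicit control on the locations of the (at most two) sign changes, so that the comparison never leaves the class of $-,+,-$ differences; alternatively, one might seek a sign-regular representation $f_\eta-f_\theta=\int K(x,s)\,d\mu(s)$ with $K$ totally positive and $\mu$ carrying exactly two sign changes, whence the variation-diminishing property delivers the bound in one stroke. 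Throughout, the hypothesis $\alpha\ge1$ enters exactly where log-concavity ($\mathrm{PF}_2$) is invoked---both in the convolution step and in the two-variable base case---which is consistent with, and indeed explains, the breakdown of the conjecture for $\alpha<1$.
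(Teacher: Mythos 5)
Your setup is sound as far as it goes: the boundary analysis at $0$ and at $\infty$, the reduction of a two--coordinate transfer to a variation--diminishing convolution with the log-concave density of the untouched components, and the observation that $\alpha\ge 1$ enters precisely through log-concavity are all correct --- but they essentially reproduce what Diaconis and Perlman had already established (their cases (a) and (c), cited in the introduction). The genuine gap is exactly where you flag it yourself: the passage from two--coordinate transfers to an arbitrary pair $\eta\prec\theta$ is not a remaining technicality, it is the entire content of the theorem, and neither of the two directions you sketch is carried out or shown to be workable. Chaining $T$-transforms fails for the reason you give (single crossing is not preserved under sums or under composition of one-step comparisons); no sign-regular representation $f_\eta-f_\theta=\int K(x,s)\,d\mu(s)$ with a two-sign-change measure is exhibited, nor is it clear one exists. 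Moreover your intended target $S^{-}(f_\eta-f_\theta)\le 2$ is strictly stronger than single crossing of the distribution functions and is nowhere established beyond $n=2$; the paper never proves (or needs) this density-level statement for general $n$. A smaller point: at $x\to\infty$ you need $\max_i\eta_i<\max_i\theta_i$ strictly for the ratio to vanish; the equality case must be split off separately (by cancelling the common largest component), and the paper is careful about this hypothesis in Theorem~1 and Remark~1.

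The paper's actual resolution is quite different and requires several ingredients the proposal does not anticipate: (i) an induction on $n$ in which an intermediate weight vector is deformed continuously between two extreme configurations, with stochastic monotonicity and continuity of the resulting crossing points used to force two single crossings to merge into one (the proofs of Theorem~1 and Theorem~4); (ii) a new ``V-majorization'' order, introduced so that the induction hypothesis remains applicable along this deformation; (iii) a local perturbation analysis (the lemma on analytic families of densities) which, by a root-counting argument for analytic functions on a strip, reduces the absence of extra crossings for nearby weight vectors to the strict unimodality of arbitrary mixtures of certain gamma-convolution densities --- this is also what explains the failure for $\alpha<1$, via the bimodal mixture of Lemma~3; and (iv) a new ``supplemented likelihood ratio'' order, shown to be preserved under convolution with PF$_3$ densities, which is the tool that actually delivers the required unimodality. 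Your proposal would need to supply something equivalent to (i)--(iv) before it could be called a proof; as written it stops at the point where the real work begins.
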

 Intuitively $\sum_{i=1}^n \theta_i X_i$ is more dispersed when the weight vector $\theta$ becomes less uniform.  This conjecture, known as the unique crossing conjecture (UCC), can be seen as a strong statement about the dispersion of these weighted sums in terms of tail probabilities.  Disconis and Perlman verified the UCC in the following cases: (a) $n=2$; (b) $n=3$ and $\alpha=1$; (c) $n\geq 3,\ \alpha\geq 1$ and $\theta$ and $\eta$ differ in only two components; (d) $n\geq 3$ and components of $\eta$ are equal.  Case (d) is useful for providing bounds on $F_\theta$ in terms of the distribution function of a single gamma variable.  For this purpose Diaconis and Perlman also carried out detailed analysis of the location of the crossing point between $F_\theta$ and $F_\eta$ when all components of $\eta$ are equal.  Nevertheless, as remarked by several authors (Kochar and Xu, 2012; Roosta-Khorasani and Sz\'{e}kely, 2015) the UCC itself has remained an open problem. 

In this paper we show that the UCC holds when $\alpha \geq 1$, but may fail when $\alpha <1$, which is surprising as previous work has all supported the general validity of the UCC.  This is relevant for understanding the behavior of tail probabilities for weighted sums of $\chi^2$ random variables (corresponding to $\alpha=1/2$) which arise naturally in statistical applications.  For general $\alpha$, weighted sums of gamma variables appear in diverse areas including reliability, actuarial science and statistics, and their properties have been extensively studied; see, for example Bock et al. (1987), Diaconis and Perlman (1990), Sz\'{e}kely and Bakirov (2003), Khaledi and Kochar (2004), Zhao and Balakrishnan (2009), Yu (2009, 2011), Kochar and Xu (2012), and Roosta-Khorasani and Sz\'{e}kely (2015).

\section{Special cases and a counterexample}
Theorem~\ref{thm1} gives a positive answer to the UCC when $\alpha \geq 1$ and the weight vectors form a special configuration.  
\begin{theorem}
\label{thm1}
Suppose $\alpha \geq 1$.  Suppose $0<\theta_1\leq \cdots \leq \theta_n$ and $\eta_1\leq \cdots \leq \eta_n$ and (a) there exists $2\leq k\leq n$ such that $\theta_i<\eta_i$ for $i< k$ and $\theta_i > \eta_i$ for $i\geq k$;
(b) $\prod_{i=1}^n \eta_i > \prod_{i=1}^n \theta_i$.  Then there exists $x_0\in (0, \infty)$ such that $F_\eta(x) < F_\theta(x)$ for $x\in (0, x_0)$ and the inequality is reversed for $x> x_0$.
\end{theorem}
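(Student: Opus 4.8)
The plan is to reduce the theorem to a single statement about the log-concavity of the ratio of the two densities, and then to reduce that in turn to a variance comparison. Write $f_\theta$ and $f_\eta$ for the densities of $\sum_i\theta_i X_i$ and $\sum_i\eta_i X_i$, and set $h(x)=\log f_\eta(x)-\log f_\theta(x)$. First I would pin down the behavior of $h$ at the two ends. Near $x=0$ one has $f_\theta(x)\sim c_n\,x^{n\alpha-1}\prod_i\theta_i^{-\alpha}$ with $c_n$ independent of $\theta$, so hypothesis (b) gives $h(0^+)=\alpha\log\big(\prod_i\theta_i/\prod_i\eta_i\big)<0$; in the tail, hypothesis (a) at $i=n\ge k$ gives $\theta_n>\eta_n$, so $f_\theta$ has the heavier exponential tail and $h(x)\to-\infty$. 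Since $f_\eta,f_\theta$ are probability densities, $\int_0^\infty(f_\eta-f_\theta)=0$ forces $h>0$ on some interval. Hence, if I can show $h$ is \emph{concave} on $(0,\infty)$, it will have at most two zeros, so it crosses $0$ exactly twice; then $f_\eta-f_\theta=f_\theta(e^{h}-1)$ has the sign pattern $-,+,-$, and a routine integration argument (using $F_\eta(0)=F_\theta(0)$ and $F_\eta(\infty)=F_\theta(\infty)$) yields a single crossing of $F_\eta-F_\theta$ from $-$ to $+$, which is exactly the assertion.

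Second, I would convert the concavity of $h$ into a probabilistic inequality. The scaling substitution $x_i=s z_i$ in the convolution integral for $f_\theta$ gives $f_\theta(s)=C\,s^{n\alpha-1}G_\theta(s)$, where $G_\theta(s)=\int_0^\infty e^{-sw}\,d\nu_\theta(w)$ is the Laplace transform of a positive measure (the law of $\sum_i z_i$ on $\{\sum_i\theta_i z_i=1\}$ weighted by $(\prod_i z_i)^{\alpha-1}$). Differentiating $\log$ twice and recognizing the exponential tilt as the conditional law of $X$ given $\sum_i\theta_i X_i=s$ yields
\[
(\log f_\theta)''(s)=\frac{1}{s^2}\Big[\operatorname{Var}\Big(\textstyle\sum_i X_i\,\Big|\,\sum_i\theta_i X_i=s\Big)-(n\alpha-1)\Big].
\]
Here $\alpha\ge1$ enters: it makes each $X_i$, hence $\sum_i\theta_i X_i$, log-concave, i.e.\ $(\log f_\theta)''\le0$. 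The constant term cancels in the difference, so
\[
h''(s)=\frac{1}{s^2}\Big[\operatorname{Var}\Big(\textstyle\sum_i X_i\,\Big|\,\sum_i\eta_i X_i=s\Big)-\operatorname{Var}\Big(\textstyle\sum_i X_i\,\Big|\,\sum_i\theta_i X_i=s\Big)\Big],
\]
and concavity of $h$ is equivalent to the variance inequality, which I will call $(\star)$:
\[
\operatorname{Var}\Big(\textstyle\sum_i X_i\,\Big|\,\sum_i\eta_i X_i=s\Big)\;\le\;\operatorname{Var}\Big(\textstyle\sum_i X_i\,\Big|\,\sum_i\theta_i X_i=s\Big),\qquad s>0.
\]

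Third, I would prove $(\star)$. The intuition is transparent: the more uniform the weights, the more tightly the single linear constraint pins down $\sum_i X_i$ (for exactly equal weights the constraint determines $\sum_i X_i$, giving variance $0$), and (a) says precisely that $\eta$ is more uniform than $\theta$ in the single-crossing sense. To make this rigorous I would interpolate along the geometric path $\log w_i(\lambda)=(1-\lambda)\log\eta_i+\lambda\log\theta_i$, so that the coordinate increments are $\log\theta_i-\log\eta_i$, compute $\tfrac{d}{d\lambda}V\big(w(\lambda)\big)=\sum_i(\log\theta_i-\log\eta_i)\,w_i\,\partial_{w_i}V$ for $V(w)=\operatorname{Var}(\sum_i X_i\mid\langle w,X\rangle=s)$, and show the increment is nonnegative. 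By (a) the sequence $\log\theta_i-\log\eta_i$ changes sign exactly once (from $-$ to $+$) as $i$ increases, so an Abel-summation / Chebyshev-type argument reduces the sign of the increment to the monotonicity of $w_i\,\partial_{w_i}V$ in the index $i$ for sorted weights, together with a residual scale term governed by $\sum_i(\log\theta_i-\log\eta_i)=\log\big(\prod_i\theta_i/\prod_i\eta_i\big)<0$, which is exactly hypothesis (b).

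The main obstacle is this last step, namely $(\star)$: controlling the conditional-variance functional $V(w)$ as $w$ ranges over sorted weight vectors, and in particular proving that $w_i\,\partial_{w_i}V$ is suitably monotone in $i$ \emph{uniformly in} $s$, then combining this with the product constraint (b). Everything else is boundary asymptotics or the exact density identity above. The special configuration in the hypotheses is precisely what renders the sign of $dV/d\lambda$ tractable, and I expect the two-component cases already settled by Diaconis and Perlman (their case (c), $\alpha\ge1$) to provide the base case for the pairwise comparison underlying the Abel-summation argument.
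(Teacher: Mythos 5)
Your reduction is clean up to a point: the boundary asymptotics are correct (the small-$x$ behavior matches Yu (2009), Eq.~(13), giving $h(0^+)=\alpha\log(\prod_i\theta_i/\prod_i\eta_i)<0$, and $\theta_n>\eta_n$ does force $h(x)\to-\infty$), the passage from a $-,+,-$ sign pattern of $f_\eta-f_\theta$ to a single crossing of the distribution functions is standard, and the identity
\[
(\log f_\theta)''(s)=s^{-2}\Bigl[\operatorname{Var}\bigl(\textstyle\sum_i X_i\mid\sum_i\theta_i X_i=s\bigr)-(n\alpha-1)\Bigr]
\]
is a correct consequence of the scaling substitution and exponential tilting. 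The genuine gap is that the entire theorem now rests on the inequality $(\star)$, and $(\star)$ is not proved: your third step is a plan, not an argument. You never formulate, let alone establish, the monotonicity of $w_i\,\partial_{w_i}V$ in $i$ that the Abel-summation step would require, you give no reason it should hold uniformly in $s$, and the way hypothesis (b) is supposed to absorb the ``residual scale term'' is left entirely vague. Note also that $(\star)$ is strictly stronger than the theorem: it asserts concavity of $\log(f_\eta/f_\theta)$, i.e.\ log-concavity of the density ratio, a pointwise Schur-type comparison of conditional variances that would imply at most two density crossings between \emph{every} pair along your interpolation path. No such result appears in the literature cited here, and it is quite possibly harder than the original conjecture. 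Two further cautions: conditions (a)--(b) do not imply $\eta\prec\theta$ (the sums need not match), so the majorization intuition and the appeal to Diaconis--Perlman's case (c) as a ``base case'' do not directly apply; and a concave $h$ is sufficient but far from necessary --- all your first step needs is that $h$ changes sign at most twice.

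For contrast, the paper proves Theorem~\ref{thm1} by induction on $n$ rather than by any density-ratio argument. The base case $n=2$ is Proposition~\ref{prop1}, handled by writing $F_\theta-F_\eta$ as an integral of $H_\theta-H_\eta$ against the strictly totally positive kernel $g_{2\alpha}(x/u)$ and invoking variation diminishing. The inductive step inserts an intermediate vector $\tau(\delta)$ differing from $\theta$ in only two components and sharing a component with $\eta$, so the induction hypothesis controls the crossings $x_*(\delta)$ of $F_\tau$ with $F_\theta$ and $x^*(\delta)$ of $F_\eta$ with $F_\tau$; monotonicity and continuity in $\delta$, together with the stochastic-order endpoints supplied by Lemma~\ref{lem1}, produce a $\delta$ with $x_*=x^*$, sandwiching $F_\tau$ between $F_\theta$ and $F_\eta$ on both sides of the common crossing point. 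To salvage your route you would need an actual proof of $(\star)$, or at minimum of the weaker statement that $\operatorname{Var}(\sum_i X_i\mid\sum_i\eta_i X_i=s)-\operatorname{Var}(\sum_i X_i\mid\sum_i\theta_i X_i=s)$ changes sign at most once in $s$; as written, the central claim is missing.
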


\begin{corollary}
\label{coro0}
The UCC is valid if $n\geq 3,\ \alpha\geq 1$ and the weight vectors differ in at most three components.
\end{corollary}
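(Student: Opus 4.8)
The plan is to deduce the corollary from Theorem~\ref{thm1} after reducing to its hypotheses. Since $F_\theta$ depends only on the multiset of weights, I first reorder so that $\theta_1\le\cdots\le\theta_n$ and $\eta_1\le\cdots\le\eta_n$. Assume $\eta\prec\theta$ with $\theta$ not a permutation of $\eta$, and that the two weight vectors agree except in at most three entries; write $m$ for the size of the multiset difference, so $2\le m\le 3$. Condition (b) of Theorem~\ref{thm1} is then automatic: the map $\theta\mapsto\prod_i\theta_i$ is strictly Schur-concave, because $\sum_i\log\theta_i$ is a sum of strictly concave functions, so $\eta\prec\theta$ together with $\theta$ not a permutation of $\eta$ forces $\prod_i\eta_i>\prod_i\theta_i$.

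The heart of the argument is condition (a): the sorted differences $d_i:=\theta_i-\eta_i$ must change sign exactly once, from $-$ to $+$. Majorization gives $\sum_{i\le k}d_i\le 0$ for every $k$, with equality at $k=n$; hence the first nonzero $d_i$ is negative and the last is positive, so the number of sign changes of $(d_i)$ is odd. To bound this number, observe that it cannot exceed the number of sign changes of the integer step function $\Delta(x):=\#\{i:\eta_i\le x\}-\#\{i:\theta_i\le x\}$, which starts and ends at $0$ and jumps only at the multiset difference, by $+1$ at each $\eta$-value and $-1$ at each $\theta$-value---$2m$ jumps in all, $m$ of each sign. Each time $\Delta$ passes from a negative to a positive level it must absorb at least two $+1$-jumps, and a path with $k$ sign changes makes $(k+1)/2$ such upward passages, so $k+1\le m$, that is $k\le m-1\le 2$. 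An odd number that is at most $2$ equals $1$, which gives the single $-$-to-$+$ crossing demanded by (a).

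Theorem~\ref{thm1} is stated with strict inequalities, so before invoking it I delete every index with $\theta_i=\eta_i$. Removing a matched pair of equal entries preserves sortedness, the majorization $\eta\prec\theta$ (the bottom partial-sum inequalities only shift by the common value), the difference count $m$, and the product inequality (b); after finitely many deletions the reduced vectors $\theta^{*},\eta^{*}$ satisfy $\theta^{*}_i\ne\eta^{*}_i$ for all $i$, so the single sign change found above becomes exactly the strict alternative (a) with a genuine threshold $k$, and Theorem~\ref{thm1} yields an $x_0$ with $F_{\eta^{*}}(x)<F_{\theta^{*}}(x)$ on $(0,x_0)$ and the reverse beyond. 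To lift this to the full vectors, let $T$ be the sum over the deleted common coordinates, so that $F_\eta-F_\theta=(F_{\eta^{*}}-F_{\theta^{*}})*G_T$ with $G_T$ the law of $T$. Since $\alpha\ge 1$ the gamma$(\alpha,1)$ density is log-concave, hence so is the density of $T$ (log-concavity is closed under convolution), and its translation kernel is totally positive of order two; such convolutions are variation-diminishing and preserve sign order, so $F_\eta-F_\theta$ has at most one sign change, again from $-$ to $+$. The two full sums have equal means $\alpha\sum_i\theta_i=\alpha\sum_i\eta_i$, so $\int_0^\infty(F_\eta-F_\theta)\,dx=0$; a nonzero function of constant sign cannot integrate to $0$, so there is at least one crossing, and therefore exactly one. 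This is the UCC.

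I expect the combinatorial sign-change bound of the second paragraph to be the main obstacle, since it is precisely where the restriction to three components is used: it is what rules out the multiple crossings of $(d_i)$ that majorization alone permits. By contrast the concluding convolution step is routine given the log-concavity available for $\alpha\ge 1$, and it is the failure of exactly this log-concavity for $\alpha<1$ that opens the door to the counterexamples found elsewhere in the paper.
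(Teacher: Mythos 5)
Your proof is correct and follows essentially the same route as the paper: reduce to Theorem~\ref{thm1} by verifying its two hypotheses under majorization (the product inequality from strict Schur-concavity of $\prod_i\theta_i$, the single sign change of the sorted differences from the restriction to at most three differing components), and then absorb the common components via the log-concavity/variation-diminishing extension for $\alpha\geq 1$, using equality of means to recover at least one crossing. The only difference is one of detail: where the paper writes ``easy to verify'' and delegates the extension step to Diaconis and Perlman, you supply an explicit combinatorial argument (the step function $\Delta$) and carry out the convolution step yourself; both check out.
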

\begin{proof}
When $n=3$, conditions of Theorem~\ref{thm1} can be written as $\eta_{(1)} > \theta_{(1)},\ \eta_{(3)} < \theta_{(3)}$ and $\theta_1\theta_2\theta_3 <\eta_1\eta_2\eta_3$.  It is easy to verify that if $\eta\prec \theta$ then these conditions are satisfied (if any $\eta_i=\theta_j$ then the problem reduces to the $n=2$ case).  As noted by Diaconis and Perlman, when $\alpha\geq 1$, one may extend the validity of the UCC for $n=3$ to $n\geq 3$ provided that the weight vectors differ in at most three components.  
\end{proof}

{\bf Remark 1.} The conditions of Theorem~\ref{thm1} may be relaxed to allow $\theta_i= \eta_i$ for some $i<n$ but we cannot relax the assumption $\theta_n > \eta_n$, which ensures that $F_\eta(x) > F_\theta(x)$ for large enough $x$.  For example, letting $n=3,\, \alpha=1$, $\theta=(1, 6, 10)$ and $\eta = (4, 5, 10)$ we can directly check that $F_\theta\leq_{\rm st} F_\eta$ and there is no crossing.  When $\theta_n = \eta_n$, deciding whether there is one crossing or no crossing requires additional analysis. 

To prove Theorem~\ref{thm1} we need some preliminary results.  The following lemma is a special case of Theorem~1 of Yu (2011). 
\begin{lemma}
\label{lem1}
For $n \geq 2$ and $\alpha >0$, if $\log \eta \prec \log \theta$ then $F_\eta(x)\geq F_\theta(x)$ for all $x\in (0, \infty)$.  That is, $F_\eta$ is stochastically dominated by $F_\theta$. 
\end{lemma}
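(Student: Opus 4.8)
The plan is to reduce the claim, via majorization theory, to the case $n=2$ and then settle that two-variable case by a symmetrization argument. Since $\log\eta\prec\log\theta$, the Hardy--Littlewood--P\'olya characterization of majorization lets me write a chain $\log\theta=w^{(0)},w^{(1)},\dots,w^{(m)}=\log\eta$ in which each $w^{(k+1)}$ is obtained from $w^{(k)}$ by a single $T$-transform acting on two coordinates (moving them closer together while preserving their sum) and fixing the rest. On the original scale this replaces one pair $(\theta_i,\theta_j)$ by a more uniform pair with the same product $\theta_i\theta_j$, all other weights unchanged. Because usual stochastic order is preserved under convolution with an independent summand, conditioning on $Y=\sum_{l\ne i,j}\theta_l X_l$ reduces each step to the two-variable statement: if $\theta_1\theta_2$ is held fixed, then $\theta_1X_1+\theta_2X_2$ is stochastically larger the more $(\log\theta_1,\log\theta_2)$ is spread out. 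Chaining the $m$ steps then gives $\bar F_\eta\le\bar F_\theta$ pointwise, equivalently $F_\eta\ge F_\theta$, which is the assertion.

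For the two-variable core I would scale out the common factor (stochastic order is scale invariant) and write $\theta_1=e^{t}$, $\theta_2=e^{-t}$, so that it suffices to show $\bar G_t(x):=P(e^{t}X_1+e^{-t}X_2>x)$ is nondecreasing in $t\ge0$ for every fixed $x$. The key manipulation is the substitution $u=e^{-t}p,\ v=e^{t}q$ in
\[
\bar G_t(x)=\frac{1}{\Gamma(\alpha)^2}\iint_{e^{t}u+e^{-t}v>x}(uv)^{\alpha-1}e^{-u-v}\,du\,dv,
\]
which has unit Jacobian, leaves $(uv)^{\alpha-1}$ invariant, and turns the region into the $t$-free set $\{p+q>x\}$, pushing all $t$-dependence into the exponent. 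Differentiating under the integral sign (legitimate for every $\alpha>0$, since $(pq)^{\alpha-1}$ is integrable at the axes and the exponential dominates) gives
\[
\partial_t\bar G_t(x)=\frac{1}{\Gamma(\alpha)^2}\iint_{p+q>x}(pq)^{\alpha-1}\bigl(e^{-t}p-e^{t}q\bigr)e^{-e^{-t}p-e^{t}q}\,dp\,dq.
\]

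To read off the sign I would set $P=e^{-t}p,\ Q=e^{t}q$ (again unit Jacobian, $(pq)^{\alpha-1}=(PQ)^{\alpha-1}$), obtaining
\[
\partial_t\bar G_t(x)=\frac{1}{\Gamma(\alpha)^2}\iint_{e^{t}P+e^{-t}Q>x}(PQ)^{\alpha-1}(P-Q)\,e^{-P-Q}\,dP\,dQ,
\]
and exploit the reflection $\sigma:(P,Q)\mapsto(Q,P)$. The measure $d\mu=(PQ)^{\alpha-1}e^{-P-Q}\,dP\,dQ$ is $\sigma$-invariant while the sign factor satisfies $(P-Q)\circ\sigma=-(P-Q)$, so the contribution of the $\sigma$-invariant set $R\cap\sigma(R)$, where $R=\{e^{t}P+e^{-t}Q>x\}$, vanishes. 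On the remaining set $R\setminus\sigma(R)$ one has $e^{t}P+e^{-t}Q>x\ge e^{t}Q+e^{-t}P$, forcing $(e^{t}-e^{-t})(P-Q)>0$ and hence $P-Q>0$ for $t>0$; thus the surviving integrand is nonnegative and $\partial_t\bar G_t(x)\ge0$. I expect the main obstacle to be precisely this sign determination: the tempting shortcut through the Laplace transform, where $\log E[e^{-\lambda S_\theta}]=-\alpha\sum_i\log(1+\lambda\theta_i)$ is visibly Schur-concave in $\log\theta$, only yields the weaker Laplace-transform order rather than pointwise domination of survival functions, so it is the symmetrization above—not a transform comparison—that must carry the proof.
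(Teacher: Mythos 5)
Your proof is correct, but it is a genuinely different route from the paper's: the paper disposes of this lemma in one line, citing it as a special case of Theorem~1 of Yu (2011), whereas you supply a self-contained argument. Your reduction is the standard one --- Hardy--Littlewood--P\'olya lets you realize $\log\eta\prec\log\theta$ as a chain of $T$-transforms, each of which on the original scale perturbs a single pair $(\theta_i,\theta_j)$ while fixing the product $\theta_i\theta_j$ and all other coordinates, and closure of $\leq_{\rm st}$ under independent convolution then reduces everything to $n=2$. The two-variable core is handled cleanly: after scaling so that $\theta_1=e^t$, $\theta_2=e^{-t}$, your change of variables $u=e^{-t}p$, $v=e^{t}q$ (unit Jacobian, $(uv)^{\alpha-1}$ invariant) moves the $t$-dependence out of the region $\{p+q>x\}$ and into the exponent, the differentiation under the integral is easily dominated, and the reflection $(P,Q)\mapsto(Q,P)$ kills the contribution of the symmetric part of the region while forcing $P>Q$ on the asymmetric remainder $R\setminus\sigma(R)$, since there $e^tP+e^{-t}Q>x\geq e^tQ+e^{-t}P$ gives $(e^t-e^{-t})(P-Q)>0$. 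This yields $\partial_t\bar G_t(x)\geq 0$ and hence the claim; I also checked that the direction of the monotonicity matches the direction of the majorization. Your closing remark that the Laplace-transform (Schur-concavity) shortcut only delivers the weaker Laplace order is apt. What the paper's citation buys is generality --- Yu (2011) proves the stochastic comparison for weighted sums of a broader class of iid variables, not just gammas --- while your argument buys transparency and self-containment at the cost of being tied to the explicit product form of the gamma density, which is exactly what makes $(uv)^{\alpha-1}$ invariant under your substitution.
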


Proposition~\ref{prop1} summarizes conditions for unique crossing for $n=2$ and general $\alpha>0$.
\begin{proposition}
\label{prop1}
Suppose $n=2$.  Then $F_\eta$ crosses $F_\theta$ exactly once, and from below, as $x$ increases from $0$ to $\infty$ if and only if $\theta_1 \theta_2 < \eta_1 \eta_2$ and $\max(\theta_1, \theta_2) > \max(\eta_1, \eta_2)$.
\end{proposition}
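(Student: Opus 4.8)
The plan is to reduce the single-crossing statement about the distribution functions to a sign-change count for the underlying densities, and to settle the ``only if'' direction by matching the behavior of $F_\eta-F_\theta$ at the two endpoints $x\to 0^+$ and $x\to\infty$. By exchangeability of $X_1,X_2$ the law of $\theta_1X_1+\theta_2X_2$ depends only on the unordered pair $\{\theta_1,\theta_2\}$, so I order $\theta_1\le\theta_2$ and $\eta_1\le\eta_2$, whence $\max(\theta_1,\theta_2)=\theta_2$ and $\max(\eta_1,\eta_2)=\eta_2$. First I would record the two endpoint asymptotics. Writing out the convolution and substituting $y=xu$ gives, for $\theta_1<\theta_2$,
\[
f_\theta(x)=\frac{B(\alpha,\alpha)}{\Gamma(\alpha)^2(\theta_1\theta_2)^\alpha}\,x^{2\alpha-1}e^{-x/\theta_2}\,M(\alpha,2\alpha,-(1/\theta_1-1/\theta_2)x),
\]
where $M$ is the confluent hypergeometric function and $B$ the beta function. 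From this (or directly from the convolution integral) one reads off $F_\theta(x)\sim \tfrac{B(\alpha,\alpha)}{2\alpha\,\Gamma(\alpha)^2}(\theta_1\theta_2)^{-\alpha}x^{2\alpha}$ as $x\to0^+$, so the origin behavior is governed by the product $\theta_1\theta_2$; and $1-F_\theta(x)\sim c_\theta\,x^{\alpha-1}e^{-x/\theta_2}$ as $x\to\infty$, so the tail is governed by the maximum $\theta_2$.

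These asymptotics immediately dispatch the ``only if'' direction, which I would carry out by contraposition. If $\eta_1\eta_2<\theta_1\theta_2$, then $F_\eta-F_\theta>0$ near $0$, so a first crossing would be from above, not below. If $\eta_1\eta_2=\theta_1\theta_2$, the two log-vectors have equal coordinate sum and are therefore comparable in majorization, so Lemma~\ref{lem1} forces $F_\eta\ge F_\theta$ or $F_\theta\ge F_\eta$ throughout, precluding a genuine single crossing. Similarly, if $\eta_2>\theta_2$ the tail asymptotics give $F_\eta-F_\theta<0$ for large $x$, so $\eta$ ends up below; the boundary case $\eta_2=\theta_2$ reduces to a common-maximum comparison which, together with $\eta_1\eta_2\neq\theta_1\theta_2$, is again excluded by the endpoint signs. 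Hence a unique crossing from below forces both $\theta_1\theta_2<\eta_1\eta_2$ and $\theta_2>\eta_2$, and one checks these give $\theta_1<\eta_1\le\eta_2<\theta_2$.

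For the ``if'' direction I would pass to the density difference $g:=f_\eta-f_\theta$. The endpoint asymptotics show $g(x)<0$ both as $x\to0^+$ (since $\eta_1\eta_2>\theta_1\theta_2$ makes $f_\eta$ smaller near $0$) and as $x\to\infty$ (since $\eta_2<\theta_2$ makes the $\eta$-tail lighter), while $\int_0^\infty g=0$. The reduction I would use is the elementary fact that a function $g$ with $\int_0^\infty g=0$ that is negative at both ends and not identically zero cannot change sign zero or one time; so if one shows $g$ changes sign \emph{at most} twice, the pattern must be $-,+,-$, and its primitive $G(x)=\int_0^x g=F_\eta(x)-F_\theta(x)$ then changes sign exactly once, from $-$ to $+$, which is precisely the assertion. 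Thus the whole problem collapses to the bound $S^-(f_\eta-f_\theta)\le 2$, equivalently that the likelihood ratio $r(x)=f_\eta(x)/f_\theta(x)$ crosses the level $1$ at most twice. Using the representation above, $r(x)$ equals a constant times $e^{-(1/\eta_2-1/\theta_2)x}\,M(\alpha,2\alpha,-\beta_\eta x)/M(\alpha,2\alpha,-\beta_\theta x)$ with $\beta_\theta=1/\theta_1-1/\theta_2$ and $\beta_\eta=1/\eta_1-1/\eta_2$, and $\theta_1<\eta_1\le\eta_2<\theta_2$ yields $\beta_\eta<\beta_\theta$.

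I expect this last step to be the main obstacle: for $\alpha\ge1$ the convolution is log-concave and monotonicity of the score makes the ratio argument soft, but Proposition~\ref{prop1} must cover all $\alpha>0$, where log-concavity fails and the Kummer-function ratio must be controlled directly. I would handle it by differentiating $\log r$ and using $M(\alpha,2\alpha,-\beta x)=B(\alpha,\alpha)^{-1}\int_0^1(u(1-u))^{\alpha-1}e^{-\beta x u}\,du$ to write $(\log r)'(x)=-(1/\eta_2-1/\theta_2)+\beta_\theta\,m(\beta_\theta x)-\beta_\eta\,m(\beta_\eta x)$, where $m(t)$ is the mean of $u$ under the beta$(\alpha,\alpha)$ law tilted by $e^{-tu}$. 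Since $t\mapsto m(t)$ decreases from $\tfrac12$ to $0$ and $\beta\,m(\beta x)$ tends to $0$ at both $x\to\infty$ (uniformly in $\beta$) and to $\beta/2$ at $x\to0$, the point is to show the tilted-mean difference $\beta_\theta\,m(\beta_\theta x)-\beta_\eta\,m(\beta_\eta x)$ crosses the constant level $1/\eta_2-1/\theta_2$ at most twice; a unimodality estimate for this difference would then give $S^-(g)\le2$. An alternative, if the tilting estimate proves stubborn, is to invoke the total positivity of the gamma convolution kernel to bound the number of sign changes of $f_\eta-f_\theta$ directly, bypassing the explicit special-function manipulation.
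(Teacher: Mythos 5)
Your necessity argument is sound and is essentially the paper's: the small-$x$ behavior of $F_\theta$ is governed by $(\theta_1\theta_2)^{-\alpha}$, the tail by $\max(\theta_1,\theta_2)$, and the boundary case $\eta_1\eta_2=\theta_1\theta_2$ is dismissed by log-majorization via Lemma~\ref{lem1}. The problem is the sufficiency direction. Your reduction of the single crossing of $F_\eta-F_\theta$ to the sign-change bound $S^-(f_\eta-f_\theta)\le 2$ is valid (given the endpoint signs and $\int_0^\infty(f_\eta-f_\theta)=0$), but that bound \emph{is} the entire content of sufficiency, and you do not prove it: you explicitly label it ``the main obstacle'' and offer two unexecuted strategies. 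The first --- showing that $\beta_\theta m(\beta_\theta x)-\beta_\eta m(\beta_\eta x)$ crosses the level $1/\eta_2-1/\theta_2$ at most twice, where $m$ is an exponentially tilted beta mean --- is itself a two-crossing statement about a difference of special functions, must hold for all $\alpha>0$, and is not visibly easier than the claim it is meant to establish; no unimodality estimate is supplied. The second (``invoke the total positivity of the gamma convolution kernel \ldots directly'') is not an argument until you specify which kernel acts on which function with how many sign changes.

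For comparison, the paper closes exactly this gap by conditioning on $X_1+X_2\sim{\rm gamma}(2\alpha)$, which is independent of $W_1=X_1/(X_1+X_2)\sim{\rm beta}(\alpha,\alpha)$. This yields $F_\theta(x)-F_\eta(x)=\int_0^\infty (H_\theta(u)-H_\eta(u))\,x u^{-2}g_{2\alpha}(x/u)\,{\rm d}u$ with $H_\theta(u)=\Pr(\theta_1W_1+\theta_2W_2\le u)$; the difference $H_\theta-H_\eta$ is an explicit difference of two beta$(\alpha,\alpha)$ distribution functions evaluated at affine arguments, which under $\theta_1<\eta_1\le\eta_2<\theta_2$ changes sign exactly once, from $+$ to $-$, at an explicit point $u^*$, and the strict total positivity of the kernel $g_{2\alpha}(x/u)$ then limits $F_\theta-F_\eta$ to at most one sign change of the same orientation. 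If you wish to keep your density-level route you need an analogous concrete device; as written, the proposal stops precisely where the real work begins.
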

\begin{proof}
Let us assume $\theta_1 \leq \theta_2$ and $\eta_1 \leq \eta_2$ without loss of generality. 

{\bf Necessity.} Suppose $F_\eta$ crosses $F_\theta$ exactly once from below.  Then $F_\eta(x) -F_\theta(x)$ is negative for sufficiently small $x>0$ and is positive for sufficiently large $x$.  But 
$$\frac{1 - F_\theta(x)}{1 - F_\eta(x)}  \leq \frac{\Pr(\theta_2 (X_1+X_2) > x)}{\Pr(\eta_2 X_2 > x)}.$$
The latter ratio is asymptotic to $g_{2\alpha}(x/\theta_2)/ g_\alpha(x/\eta_2)$ as $x\to \infty$ where $g_\alpha(t) \equiv  t^{\alpha -1} e^{-t} /\Gamma(\alpha)$.  It is clear that if $\theta_2 < \eta_2$ then $(1 - F_\theta(x))/(1 - F_\eta(x))\to 0$ as $x\to\infty$ and hence we must have $\theta_2> \eta_2$.  (It is easy to dismiss the boundary case $\theta_2 = \eta_2$.)  On the other hand,  
$$\lim_{x\downarrow 0} \frac{F_\theta(x)}{F_\eta(x)} = \lim_{x\downarrow 0} \frac{ f_\theta(x)
}{f_\eta(x)} = \left(\frac{\theta_1\theta_2}{\eta_1\eta_2}\right)^{-\alpha},$$
see, for example, Yu (2009), Equation (13).  Hence we must have $\theta_1\theta_2 \leq \eta_1\eta_2$.  To rule out the boundary case, note that if $\theta_1\theta_2 = \eta_1\eta_2$, then $(\log\eta_1, \log\eta_2)\prec (\log \theta_1, \log \theta_2)$ and, by Lemma~\ref{lem1}, there is no crossing.

{\bf Sufficiency.} Assume $\theta_1\theta_2 < \eta_1 \eta_2$ and $\theta_2 > \eta_2$.  Retracing the proof of the necessity part, we can deduce that $F_\eta$ crosses $F_\theta$ at least once, from below.  To show that the crossing point is unique we slightly extend the arguments of Diaconis and Perlman (1990).  We have 
$$F_\theta(x) - F_\eta(x) = x u^{-2} \int_0^\infty (H_\theta(u) - H_\eta(u)) g_{2\alpha}(x/u) {\rm d} u$$
where $H_\theta(u) = \Pr(\theta_1 W_1 + \theta_2 W_2 \leq u)$ and $W_1$ is a beta($\alpha, \alpha$) random variable with $W_1 = 1-W_2 = X_1/(X_1 + X_2).$ The kernel $g_{2\alpha}(x/u)$ is strictly totally positive (STP) for $(x, u) \in (0, \infty)^2$.  The claim would follow from variation-diminishing properties of STP kernels if we can show that $H_\theta(u) -H_\eta(u)$ changes signs only once, from $+$ to $-$, as u increases on $(0, \infty)$.  Note that 
$$H_\theta(u) - H_\eta(u) =B\left(\frac{\eta_2 - u}{\eta_2 -\eta_1}\right) - B\left(\frac{\theta_2 -u}{\theta_2 -\theta_1}\right)$$
where $B(\cdot)$ denotes the beta($\alpha, \alpha$) distribution function.  Let 
$$u^* = \frac{\theta_2 \eta_1 - \eta_2 \theta_1}{\theta_2 -\theta_1 -\eta_2 +\eta_1}.$$
Under the assumptions we have $\theta_1< \eta_1 \leq \eta_2 <\theta_2$.  It follows that $\eta_1 \leq u^* \leq \eta_2 $ and 
$$
H_\theta(u) - H_\eta(u) \begin{cases} > 0 & {\rm if}\ \theta_1 < u < u^*,\\
  < 0 & {\rm if}\  u^* < u < \theta_2,\\
  =0 & {\rm otherwise}.
 \end{cases}
$$
That is, $H_\theta(u) - H_\eta(u)$ has only one sign change at $u^*$, as needed.
 \end{proof}
{\bf Remark 2.} This Proposition is closely related to Theorem~3.6 of Kochar and Xu (2011) and Proposition~3.1 of Kochar and Xu (2012) who compare $F_\theta$ and $F_\eta$ according to the {\it star order} (Shaked and Shanthikumar, 2007).  $F_\eta \leq_{*} F_\theta$ means $F_\eta(x)$ crosses $F_\theta(cx)$ at most once, and always from below, for all $c>0$.  Proposition~\ref{prop1} can be used to recover a special case of Proposition~3.1 of Kochar and Xu (2012). 
\begin{corollary}
If $\theta_2>\theta_1,\ \eta_2>\eta_1$ and $\theta_2/\theta_1 > \eta_2/\eta_1$ then $F_\eta \leq_* F_\theta$.
\end{corollary}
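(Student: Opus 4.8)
The plan is to unfold the definition of the star order and reduce everything to Proposition~\ref{prop1} and Lemma~\ref{lem1}. By definition $F_\eta \leq_* F_\theta$ means that for every $c>0$ the difference $F_\eta(x) - F_\theta(cx)$ changes sign at most once, and only from $-$ to $+$, as $x$ increases. Since $F_\theta(cx) = F_{\theta/c}(x)$ with $\theta/c \equiv (\theta_1/c, \theta_2/c)$ again an ordered positive weight vector, it suffices to compare $F_\eta$ and $F_{\theta/c}$ for each fixed $c$. Proposition~\ref{prop1} applies to the pair $(\eta, \theta/c)$: the two functions cross exactly once, from below, precisely when $\theta_1\theta_2/c^2 < \eta_1\eta_2$ and $\theta_2/c > \eta_2$, that is, when $c_1 < c < c_2$, where $c_1 \equiv \sqrt{\theta_1\theta_2/(\eta_1\eta_2)}$ and $c_2 \equiv \theta_2/\eta_2$.

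First I would record the single inequality that the hypothesis buys us: $c_1 < c_2$. Indeed $c_1$ is the geometric mean of $\theta_1/\eta_1$ and $\theta_2/\eta_2$, so $c_1 < c_2 = \theta_2/\eta_2$ is equivalent to $\theta_1/\eta_1 < \theta_2/\eta_2$, i.e. to the assumption $\theta_2/\theta_1 > \eta_2/\eta_1$. Thus the interval $(c_1,c_2)$ is nonempty, and on it Proposition~\ref{prop1} delivers exactly one crossing from below, which is the desired behavior.

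It then remains to show that for $c$ outside $(c_1,c_2)$ there is no crossing at all. For $c \geq c_2$ one has $\theta_2/c \leq \eta_2$ and, using $\theta_1/\eta_1 < \theta_2/\eta_2$, also $\theta_1/c < \eta_1$, so $\theta/c \leq \eta$ coordinatewise; hence $F_{\theta/c} \geq F_\eta$ pointwise and $F_\eta(x) - F_\theta(cx) \leq 0$ throughout. For $c \leq c_1$ I would argue through the threshold $c_1$ itself: there the products coincide, $\theta_1\theta_2/c_1^2 = \eta_1\eta_2$, and since $\theta/c_1$ is the more spread of the two vectors in the logarithmic scale we have $\log\eta \prec \log(\theta/c_1)$, whence Lemma~\ref{lem1} gives $F_{\theta/c_1} \leq F_\eta$. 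For any $c \leq c_1$ the weights only increase, $\theta/c \geq \theta/c_1$ coordinatewise, so $F_{\theta/c} \leq F_{\theta/c_1} \leq F_\eta$ and again $F_\eta(x) - F_\theta(cx) \geq 0$ throughout. Collecting the three regimes, for every $c>0$ the difference $F_\eta - F_{\theta/c}$ has at most one sign change and any such change is from $-$ to $+$, which is exactly $F_\eta \leq_* F_\theta$.

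The routine parts are the coordinatewise comparisons and the identification of $c_1$ and $c_2$. The one genuinely load-bearing step is the treatment of the range $c \leq c_1$: rather than invoking a variation-diminishing argument configuration by configuration, the key is to locate the single scale $c_1$ at which the log-majorization of Lemma~\ref{lem1} holds with equality of products, and then to propagate the resulting inequality to all smaller $c$ by monotonicity of $F_{\theta/c}$ in the weights. I expect this propagation, together with the verification that $c_1 < c_2$ matches the hypothesis exactly, to be where the argument must be made with care.
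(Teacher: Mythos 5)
Your proposal is correct and follows essentially the same route as the paper, whose proof is only a one-sentence sketch stating that for each $c>0$ either Proposition~\ref{prop1} applies (one crossing from below) or the two distributions are stochastically ordered (no crossing). You have simply filled in the details: the thresholds $c_1=\sqrt{\theta_1\theta_2/(\eta_1\eta_2)}$ and $c_2=\theta_2/\eta_2$, the equivalence of $c_1<c_2$ with the hypothesis, coordinatewise domination for $c\geq c_2$, and Lemma~\ref{lem1} at $c=c_1$ propagated to all $c\leq c_1$ — all of which check out.
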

 \begin{proof}
In the stated parameter configuration one can show that, for every $c>0$, either $\theta$ and $c\eta$ satisfy the necessary conditions of Proposition~\ref{prop1} and  $F_\theta$ and $F_{c\eta} $ cross exactly once, or they are ordered by the usual stochastic order, and there is no crossing.  In other words, $F_\eta \leq_* F_\theta$.
\end{proof}
\begin{proof}[Proof of Theorem~\ref{thm1}]
We use induction on $n$.  The case of $n=2$ is given by Proposition~\ref{prop1}.  Suppose $n\geq 3$.  The following argument works for $k<n$, and can be modified (with a different definition of $\tau$) to handle the $k=n$ case.  Let us consider $\tau \equiv (\theta_1, \ldots, \theta_{k-2}, \delta, \eta_{k}, \theta_{k+1}, \ldots, \theta_n),$ where  
$$\delta_*\equiv \frac{\theta_{k-1}\theta_{k}}{\eta_{k}}\leq \delta\leq \min\left(\theta_{k},\, \prod_{i\neq k} \eta_i/\prod_{i\neq k-1, k} \theta_i\right)\equiv \delta^*.$$
It is easy to see that $\delta_*<\delta^*$, and for $\delta\in (\delta_*, \delta^*)$ we have $\delta > \theta_{k-1}$ and $\prod_{i=1}^n \eta_i > \prod_{i=1}^n \tau_i > \prod_{i=1}^n \theta_i$. 
Also, $\tau_i <\eta_i$ for $i< k-1$ and $\tau_i >\eta_i$ for $i\geq k+1$ (including $i=n$ since $k<n$).  Hence the sequence $\tau_{(i)} - \eta_i,\ i=1,\ldots, n$, has exactly one sign change, whether $\delta > \eta_{k-1}$ or not.  (In the special case $k=2$ we have $\delta < \eta_1$.)  Here we use $\tau_{(i)}$ rather than $\tau_i$ to account for possible switching between $\eta_{k-1}$ and $\delta$ when we rearrange $\tau$.  As $\tau$ and $\theta$ differ in only two components, and $\tau$ and $\eta$ have at least one ($\eta_k$) in common, by the induction hypothesis, $F_\tau$ crosses $F_\theta$ at most once, from below (say at $x_*$) and $F_\eta$ crosses $F_\tau$ at most once, from below (say at $x^*$).  When $\alpha \geq 1$ the gamma density is log-concave, which ensures that adding identical components does not create multiple crossings.  It is possible that the original single crossing is annihilated.  If $\delta$ is large then $F_\tau$ could stay entirely below $F_\theta$ (effectively $x_*=\infty$).  It is not possible, however, for $F_\eta$ to stay entirely below $F_\tau$, because $\tau_n>\eta_n$. 

Note that $F_\tau$ stochastically increases in $\delta$, which implies that $x_*$ increases while $x^*$ decreases in $\delta$.  This monotonicity can then be used to show that the crossing points (as long as they are finite) are continuous functions of $\delta$.  Specifically, fix $\delta_0\in (\delta_*, \delta^*)$ such that $x_*(\delta_0)$ is finite.  Then, by the continuity of $F_\theta$ and $F_\tau$, and the monotonicity of $x_*$, we have 
$$F_\theta(x_*(\delta_0 +)) = \lim_{\delta \downarrow \delta_0}  F_\theta(x_*(\delta)) = \lim_{\delta \downarrow \delta_0} F_\tau (x_*(\delta)) = F_{\tau(\delta_0)}(x_*(\delta_0 + )).
$$
That is, $F_\theta$ and $F_{\tau(\delta_0)}$ crosses at $x_*(\delta_0 +)$.  By uniqueness of the crossing point we have $x_*(\delta_0) = x_*(\delta_0 +)$, and similarly $x_*(\delta_0) = x_*(\delta_0 -)$, showing that $x_*$ is continuous.  
 
 At $\delta = \delta_*$, we have $F_\tau\leq_{\rm st} F_\theta$ by Lemma~\ref{lem1}.  So there is no crossing between $F_\tau$ and $F_\theta$.  That is, $x_*\downarrow 0$ as $\delta\downarrow \delta_*$ and $\lim_{\delta\downarrow \delta_*} x^* >0$.  At $\delta = \theta_k$ we have $F_\tau\geq_{\rm st} F_\theta$.  Thus $x_*\uparrow \infty$ as $\delta\uparrow \theta_k$ if $\delta^*=\theta_k$.  The other possibility is 
 $\delta^* = \prod_{i\neq k} \eta_i/\prod_{i\neq k, k-1} \theta_i$.  In this case, at $\delta=\delta^*$ we have $F_\tau\geq_{\rm st} F_\eta$ again by Lemma~\ref{lem1}, because $\log\eta\prec \log\tau$.  Indeed, because $\log(\eta_i/\tau_i)$ changes signs only once (after $\tau$ is arranged in increasing order), $\sum_{i=1}^l \log(\eta_i/\tau_i)$ first increases, and then decreases.  At $l=1$ we have $\eta_1>\tau_1$.  At $l=n$ we have $\sum_{i=1}^n \log(\eta_i/ \tau_i) = 0$.  Thus $\sum_{i=1}^l \log(\eta_i/ \tau_i) \geq 0$ for all $l=1,\ldots, n$, that is, $\log\eta\prec\log\tau$. It follows that $x^*\downarrow 0$ as $\delta\uparrow \delta^*$.  
 
Regardless of whether $\delta^*=\theta_k$, we have $x_*< x^*$ as $\delta\to \delta_*$ and $x_* > x^*$ as $\delta\to \delta^*$.  By continuity, there exists some $\delta$ such that $x_*=x^*$ and 
 \begin{align*}
 F_\theta(x) &> F_\tau(x)> F_\eta(x),\quad 0<x<x^*;\\
 F_\theta(x) &< F_\tau(x)< F_\eta(x),\quad x^*<x<\infty.
 \end{align*}
It follows that $F_\eta$ crosses $F_\theta$ exactly once, from below. 
\end{proof}

Theorem~\ref{thm2} gives a negative answer to the UCC for $n=3$ and $\alpha < 1$.  Note that counterexamples for $n>3$ can be generated from a counterexample for $n=3$ by appending small enough components to the weight vectors.

\begin{theorem}
\label{thm2}
For every $0<\alpha < 1$ there exist positive vectors $\theta$ and $\eta$ with $n=3$ such that $\eta\prec \theta$ and $F_\eta(x) -F_\theta(x)$ changes signs at least three times as $x$ increases from $0$ to $\infty$. 
\end{theorem}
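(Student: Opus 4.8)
The plan is to pin the behaviour of $D(x)\equiv F_\eta(x)-F_\theta(x)$ at the two ends and then force an extra oscillation in between. Since $\eta\prec\theta$ forces $\sum\eta_i=\sum\theta_i$, the two weighted sums have equal means, so $\int_0^\infty D(x)\,\mathrm{d}x=0$; moreover $\theta\mapsto E\varphi(\sum\theta_iX_i)$ is Schur-convex for convex $\varphi$, so $\sum\eta_iX_i$ always lies below $\sum\theta_iX_i$ in the convex order. Hence no moment or stop-loss inequality can be violated by a putative counterexample, and the crossings must be produced directly. Now, because $\sum\log\eta_i\ge\sum\log\theta_i$ (Schur-concavity of $\sum\log$), we have $\prod\eta_i>\prod\theta_i$ whenever $\eta$ is not a permutation of $\theta$; the three-variable analogue of the near-zero expansion in the proof of Proposition~\ref{prop1}, giving $\lim_{x\downarrow0}F_\theta(x)/F_\eta(x)=(\prod\eta_i/\prod\theta_i)^{\alpha}>1$, then yields $D(x)<0$ for small $x$. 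If in addition $\theta$ has a strictly larger maximal weight than $\eta$, the tail comparison from the same proof gives $D(x)>0$ for large $x$. With the two ends pinned, it suffices to produce $0<x_1<x_2$ with $D(x_1)>0>D(x_2)$: the intervals $(0,x_1)$, $(x_1,x_2)$, $(x_2,\infty)$ then each contain a sign change, so $D$ changes sign at least three times.

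For the construction I would isolate the mechanism that separates $\alpha<1$ from $\alpha\ge1$. Start from a two-component pair with $(\eta_1,\eta_2)\prec(\theta_1,\theta_2)$, $\eta_1\eta_2>\theta_1\theta_2$ and $\max(\theta_1,\theta_2)>\max(\eta_1,\eta_2)$, so that by Proposition~\ref{prop1} the partial-sum difference $\phi(x)\equiv\Pr(\eta_1X_1+\eta_2X_2\le x)-\Pr(\theta_1X_1+\theta_2X_2\le x)$ changes sign exactly once, from $-$ to $+$, and satisfies $\int_0^\infty\phi=0$. Then append a common third coordinate $c=\theta_3=\eta_3$. Adding an identical component preserves majorization, so $\eta\prec\theta$ holds, and $D=\phi*k$ with $k(y)=c^{-1}g_\alpha(y/c)$ the density of $cX_3$. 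For $\alpha<1$ this $k$ is log-convex, hence \emph{not} a P\'olya frequency ($\mathrm{PF}_2$) kernel, so convolution with it need not be variation-diminishing; this is exactly the failure of the log-concavity that underlies Theorem~\ref{thm1} and Remark~1. The goal is to choose the geometry of the single crossing of $\phi$ (the location and steepness of its sign change, controlled by $\theta_1,\theta_2,\eta_1,\eta_2$) together with the scale $c$ so that convolving with the spiked, log-convex kernel $k$ drives $\phi*k$ positive at one intermediate scale $x_1$ and negative at a larger scale $x_2$.

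To make the middle excursion computable I would pass to a tractable limit rather than attack generic parameters. A natural choice is to let one weight degenerate (say $\theta_1\downarrow0$ with a compensating scaling, so $\prod\eta_i/\prod\theta_i\to\infty$ and the near-zero dip of $D$ becomes pronounced), or to rescale $\alpha$; in such a limit the two intermediate values $D(x_1)$ and $D(x_2)$ should reduce to explicit expressions in $g_\alpha$ whose signs can be read off. Continuity of $D$ in $(\theta,\eta,c)$ would then upgrade the limiting signs to an open set of genuine, strictly majorized counterexamples for each fixed $\alpha\in(0,1)$.

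The hard part will be establishing the middle sign reversal rigorously and \emph{uniformly in} $\alpha\in(0,1)$. Qualitative non-variation-diminishment of a log-convex kernel does not by itself locate, or even guarantee, the extra sign changes for a given $\phi$; one needs a quantitative handle---ideally a closed form, a monotonicity in $c$, or a clean asymptotic expansion with controlled remainder---showing that $D(x_1)$ and $D(x_2)$ have the required signs \emph{simultaneously}, not merely individually, across the whole range $0<\alpha<1$. Checking that the chosen limit is non-degenerate (strict signs surviving perturbation) and that all three sign conditions are met by a single admissible parameter choice is, I expect, the crux of the argument.
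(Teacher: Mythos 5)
Your endpoint analysis is correct and coincides with the paper's: strict Schur concavity of $\sum\log$ gives $\prod\eta_i>\prod\theta_i$, hence $F_\eta-F_\theta<0$ near $0$, and $\theta_{(3)}>\eta_{(3)}$ gives $F_\eta-F_\theta>0$ in the tail, so it suffices to exhibit one intermediate ``wrong-way'' excursion. The gap is that you never produce that excursion, and the construction you sketch points in a direction that provably cannot work in the regime where it would be computable. With a common third component $\theta_3=\eta_3=c$ and a single pairwise transfer between the first two coordinates, Lemma~\ref{lem2} gives $\frac{\partial}{\partial\delta}F_\theta(x)=\alpha(\theta_2-\theta_1)f'(x)$, where $f$ is the density of $\theta_1(X_1+Z_1)+\theta_2(X_2+Z_2)+cX_3$. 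This is a convolution of gamma densities (total shape $2+3\alpha>1$), hence self-decomposable and therefore unimodal for \emph{every} $\alpha>0$, so $f'$ has exactly one sign change; by the Hurwitz-type argument of Lemma~\ref{analytic} (the $K=1$ case), a sufficiently small transfer then yields exactly one crossing. So any ``tractable limit'' in which the transfer size shrinks cannot reveal extra crossings, and for a transfer of fixed size you offer no way to verify the signs of $D(x_1)$ and $D(x_2)$ --- you flag this yourself as the crux. Observing that $g_\alpha$ fails to be ${\rm PF}_2$ for $\alpha<1$ only removes a sufficient condition for unique crossing; it does not manufacture a counterexample.

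The idea you are missing is the one the paper isolates in Lemma~\ref{lem3}: for $\alpha<1$ the two-shape mixture $\lambda g_{1+\alpha}+g_\alpha$ can be bimodal, whereas no single gamma convolution can be. To access this, the paper perturbs all three components simultaneously: with $\theta_1\approx\theta_2=\epsilon\ll\theta_3\approx 1$, it performs two coupled transfers --- one between the two small coordinates, whose associated density (per Lemma~\ref{lem2}) tends to $g_\alpha$ as $\epsilon\downarrow 0$, and one between a small coordinate and the large one, whose density tends to $g_{1+\alpha}$ --- with relative rates $\delta$ versus $\lambda\delta^2$ tuned so that the normalized limit of $F_\theta-F_\eta$ is a positive multiple of $\lambda g_{1+\alpha}'+g_\alpha'$. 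The local minimum of the bimodal mixture supplies two points at which this derivative has the ``wrong'' signs, and these signs survive for small positive $\epsilon,\delta$ by uniform convergence, which pins $D(x_1)>0>D(x_2)$ exactly as your outline requires. The second transfer direction --- the one your common-third-component configuration removes --- is what injects the $g_{1+\alpha}$ piece and makes the middle excursion provable.
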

Theorem~\ref{thm2} is derived through a perturbation analysis rather than extensive numerical calculations.  Our counterexamples have the feature that $\theta_1 \approx \theta_2 \ll \theta_3$, and $\eta$ is a small perturbation of $\theta$ which changes all three components.  We show that for a suitable choice of such $\theta$ and $\eta$, there exists a point at which $F_\eta(x)$ crosses $F_\theta(x)$ from above.  Since $F_\eta(x)$ has to cross $F_\theta(x)$ from below for sufficiently small $x> 0$ and for sufficiently large $x$, it follows that there are at least three crossing points.  The rest of this section makes this precise.  We need the following result, which is slightly modified from Lemma~1 of Rinott et al. (2012); see also Sz\'{e}kely, G. J. and Bakirov (2003). 

\begin{lemma}
\label{lem2}
Suppose $X_i$ and $Z_i,\ i=1,2$ are independent random variables with $X_i\sim {\rm gamma}(\alpha, 1)$ and $Z_i\sim {\rm expo}(1),\ i =1, 2$, and, independently, $G$ is a weighted sum of iid gamma variates.  Fix $\theta_i^*>0,\ i=1,2$ and let $\theta_1 = \theta_1^* -\delta$ and $\theta_2 = \theta_2^* +\delta$.  Then 
$$\frac{\partial}{\partial \delta} \Pr(\theta_1 X_1 + \theta_2 X_2 + G\leq x) = \alpha (\theta_2 -\theta_1) \frac{\partial^2}{\partial x^2} \Pr(\theta_1(X_1 + Z_1) + \theta_2 (X_2 + Z_2) + G\leq x).
$$
\end{lemma}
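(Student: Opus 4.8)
The plan is to pass to Laplace transforms, where convolution becomes multiplication and the entire dependence on $\delta$ is carried by the two explicit factors $(1+t\theta_1)^{-\alpha}$ and $(1+t\theta_2)^{-\alpha}$. Writing $\mathcal L_W(t)=E[e^{-tW}]$ for $t>0$, and recalling that a gamma$(\alpha,1)$ variate scaled by $\theta$ has transform $(1+t\theta)^{-\alpha}$, independence of $G$ gives
\[
\mathcal L_S(t)=(1+t\theta_1)^{-\alpha}(1+t\theta_2)^{-\alpha}\,\phi_G(t),\qquad \phi_G(t)\equiv E[e^{-tG}],
\]
where $S=\theta_1 X_1+\theta_2 X_2+G$ and $\phi_G$ does not depend on $\delta$. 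Since $\partial\theta_1/\partial\delta=-1$ and $\partial\theta_2/\partial\delta=+1$, I would differentiate this product in $\delta$ and factor out the common term $(1+t\theta_1)^{-\alpha-1}(1+t\theta_2)^{-\alpha-1}$, which collapses the two pieces into one. The surviving bracket is $\alpha t\bigl[(1+t\theta_2)-(1+t\theta_1)\bigr]=\alpha(\theta_2-\theta_1)t^2$, so that
\[
\frac{\partial}{\partial\delta}\mathcal L_S(t)=\alpha(\theta_2-\theta_1)\,t^2\,(1+t\theta_1)^{-(\alpha+1)}(1+t\theta_2)^{-(\alpha+1)}\phi_G(t).
\]

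The product on the right is exactly $t^2$ times the transform of $T\equiv\theta_1 Y_1+\theta_2 Y_2+G$ with $Y_i\sim\mathrm{gamma}(\alpha+1,1)$; and because $Z_i\sim\mathrm{expo}(1)=\mathrm{gamma}(1,1)$ is independent of $X_i$, we have $Y_i\stackrel{d}{=}X_i+Z_i$, which is precisely the variate appearing on the right-hand side of the Lemma. Thus in the transform domain the identity reads $\partial_\delta\mathcal L_S(t)=\alpha(\theta_2-\theta_1)\,t^2\,\mathcal L_T(t)$, and the factor $\phi_G$ is carried along automatically, so no separate reduction of the $G$ term is needed.

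It remains to read this back in the $x$-domain. The left side is the transform of the density $\partial_\delta f_S$, so the transform of $\partial_\delta\Pr(S\le x)=\int_0^x\partial_\delta f_S$ is $t^{-1}\partial_\delta\mathcal L_S(t)=\alpha(\theta_2-\theta_1)\,t\,\mathcal L_T(t)$. On the other side, $\partial_x^2\Pr(T\le x)=f_T'(x)$ has transform $t\,\mathcal L_T(t)-f_T(0^+)$; since $T$ is a convolution of two gamma$(\alpha+1)$ components its density vanishes like $s^{2\alpha+1}$ at the origin, so $f_T(0^+)=0$ for every $\alpha>0$ and this transform is simply $t\,\mathcal L_T(t)$. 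The two transforms agree, and uniqueness of the Laplace transform yields $\partial_\delta\Pr(S\le x)=\alpha(\theta_2-\theta_1)\partial_x^2\Pr(T\le x)$, as claimed.

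The algebra here is elementary; the real obstacle is regularity, and it is entirely concentrated at the origin when $\alpha<1$. In that range $f_S$ is unbounded near $0$ (behaving like $s^{2\alpha-1}$), so one must check that the transform identity inverts to a genuine pointwise statement for $x>0$ and that the boundary term $f_T(0^+)$ truly vanishes. This is exactly where the extra exponentials help: convolving with $Z_1,Z_2$ raises the effective shape from $\alpha$ to $\alpha+1$, so $T$ has a continuous density vanishing at $0$, legitimizing both the discarding of the boundary term and the inversion. An alternative, transform-free route reaches the same conclusion from the pointwise identity $\partial_\theta p_\alpha(s;\theta)=-\alpha\,\partial_s p_{\alpha+1}(s;\theta)$ for the scaled gamma density $p_\alpha(s;\theta)=\theta^{-\alpha}s^{\alpha-1}e^{-s/\theta}/\Gamma(\alpha)$, by convolving the two factors and integrating in $s$; the bookkeeping is heavier but avoids any appeal to Laplace inversion.
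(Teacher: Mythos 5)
Your proposal is correct. Note first that the paper does not actually prove this lemma: it is stated as ``slightly modified from Lemma~1 of Rinott et al.\ (2012)'' and the proof is deferred to that reference, which works pointwise with densities via the identity $\partial_\theta p_\alpha(s;\theta)=-\alpha\,\partial_s p_{\alpha+1}(s;\theta)$ --- essentially the ``transform-free route'' you sketch in your last paragraph. Your main argument instead runs through Laplace transforms, and the algebra is right: differentiating $(1+t\theta_1)^{-\alpha}(1+t\theta_2)^{-\alpha}\phi_G(t)$ in $\delta$ with $\partial\theta_1/\partial\delta=-1$, $\partial\theta_2/\partial\delta=+1$ does collapse to $\alpha(\theta_2-\theta_1)t^2$ times the transform of $T=\theta_1(X_1+Z_1)+\theta_2(X_2+Z_2)+G$, since $X_i+Z_i\sim\mathrm{gamma}(\alpha+1,1)$; dividing by $t$ for the distribution function and matching against the transform of $f_T'$ (with the boundary term $f_T(0^+)=0$ because $T$ has total shape at least $2\alpha+2>1$) closes the identity. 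What the transform route buys is that the convolution with $G$ and the cross terms between the two perturbed components are handled for free by multiplicativity; what it costs is the two regularity steps you correctly identify --- interchanging $\partial_\delta$ with the Laplace integral, and inverting the transform identity to a pointwise statement --- both of which are routine here (dominated convergence for the first, continuity of both sides in $x>0$ plus uniqueness of the Laplace transform for the second). One trivial quibble: your remark that $f_S$ ``is unbounded near $0$, behaving like $s^{2\alpha-1}$'' is only accurate for $\alpha<1/2$ and ignores the shape contributed by $G$, but this is a side comment and does not affect the proof.
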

We also need a result concerning mixtures of gamma densities ($g_\alpha(t)\equiv t^{\alpha -1} e^{-t}/\Gamma(\alpha)$).
\begin{lemma}
\label{lem3}
If $\alpha \in (0, 1)$ then there exists $\lambda \in (0, \infty)$ such that the mixture density $(\lambda g_{1+\alpha}(x) + g_{\alpha}(x))/(1+\lambda)$ is bimodal.  There exists no such $\lambda$ if $\alpha \geq 1$.
\end{lemma}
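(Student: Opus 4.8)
The plan is to reduce the modality question to the sign analysis of a single quadratic. First I would clear the normalizing constants: since $\Gamma(1+\alpha)=\alpha\Gamma(\alpha)$, the mixture $(\lambda g_{1+\alpha}(x)+g_\alpha(x))/(1+\lambda)$ is a positive constant multiple of
$$m(x)=x^{\alpha-1}e^{-x}(1+cx),\qquad c=\lambda/\alpha,$$
and as $\lambda$ ranges over $(0,\infty)$ so does $c$. Thus it suffices to study the modality of $m$ on $(0,\infty)$ as $c$ varies over $(0,\infty)$.

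Next, since $m>0$ on $(0,\infty)$, the sign of $m'$ equals the sign of $(\log m)'=(\alpha-1)/x-1+c/(1+cx)$; multiplying by the positive factor $x(1+cx)$ shows that $m'$ has the same sign as the quadratic
$$q(x)=-cx^2+(c\alpha-1)x+(\alpha-1).$$
Hence $m$ has at most two interior critical points. I would then identify bimodality with the configuration in which $q$ has two distinct positive roots $x_1<x_2$, so that $m$ is decreasing on $(0,x_1)$, increasing on $(x_1,x_2)$ and decreasing on $(x_2,\infty)$. Together with the boundary behavior (here $m(x)\to\infty$ as $x\downarrow0$ when $\alpha<1$, while $m(x)\to0$ when $\alpha>1$ and $m(0)=1$ when $\alpha=1$), this decrease--increase--decrease shape is exactly a density with a local minimum at $x_1$ and a second, interior mode at $x_2$.

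The two halves of the lemma then fall out of the location of the roots of $q$. For $\alpha\ge1$ one has $q(0)=\alpha-1\ge0$ and leading coefficient $-c<0$, so the product of the roots $(\alpha-1)/(-c)$ is nonpositive and at most one root is positive; thus $q$ changes sign at most once on $(0,\infty)$, $m$ is unimodal, and no $\lambda$ produces a second mode. For $\alpha<1$ one has $q(0)=\alpha-1<0$, the product of the roots $(1-\alpha)/c$ is positive, and two positive roots occur precisely when the sum of the roots $\alpha-1/c$ is positive (i.e.\ $c>1/\alpha$) and the discriminant is positive. Writing the discriminant as $D(c)=\alpha^2c^2+(2\alpha-4)c+1$, I would note that $D(0)=1>0$ and that its minimum value equals $4(\alpha-1)/\alpha^2<0$, so $D$ has two positive roots $c_-<c_+$ with $D(c)>0$ for $c>c_+$. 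Choosing any $c>c_+$ then gives two distinct real roots, which are both positive once we confirm $c>1/\alpha$ as well.

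The one point that needs genuine care---and the main obstacle---is checking that the discriminant and sum-of-roots conditions are compatible, i.e.\ that $c_+>1/\alpha$, so that a single choice of $c$ delivers two positive roots simultaneously. This I would verify by the substitution $s=\sqrt{1-\alpha}$, under which $\alpha=(1-s)(1+s)$ and $c_+=(1-s)^{-2}$, whence $\alpha c_+=(1+s)/(1-s)>1$; equivalently any $\lambda>\alpha/(1-\sqrt{1-\alpha})^2$ makes the mixture bimodal. With $c_+>1/\alpha$ established, every $c>c_+$ yields the decrease--increase--decrease shape described above, completing both directions of the lemma.
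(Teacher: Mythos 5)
Your proof is correct, and it takes a recognizably different route from the paper's even though both ultimately come down to the sign of a quadratic. The paper parametrizes by the location $x_0\in(0,\alpha)$ of a prospective stationary point: it solves for the unique $\lambda(x_0)=-g'_\alpha(x_0)/g'_{1+\alpha}(x_0)$ making $x_0$ stationary, observes that $s''(x_0)=-\lambda'(x_0)g'_{1+\alpha}(x_0)$, and reads off the sign of $s''(x_0)$ from the quadratic $x_0^2+2(1-\alpha)x_0-\alpha(1-\alpha)$ \emph{in $x_0$}; a local minimum exists iff that quadratic can be made negative, i.e.\ iff $\alpha<1$. You instead fix the mixing weight, absorb the gamma functions via $\Gamma(1+\alpha)=\alpha\Gamma(\alpha)$ to get the clean form $x^{\alpha-1}e^{-x}(1+cx)$, and count positive roots of the numerator $q(x)=-cx^2+(c\alpha-1)x+(\alpha-1)$ of the logarithmic derivative using Vieta's relations plus the discriminant $D(c)=\alpha^2c^2+(2\alpha-4)c+1$ \emph{in $c$}; your substitution $s=\sqrt{1-\alpha}$ verifying $c_+=(1-s)^{-2}>1/\alpha$ is the one step that requires care, and it checks out ($\alpha c_+=(1+s)/(1-s)>1$). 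The two arguments are dual to one another: the paper's gives the correspondence between $\lambda$ and the location of the local minimum (useful later, since the proof of Theorem~\ref{thm2} reuses the bimodal $s$ with a specified local minimum $x_0$), while yours gives an explicit threshold, namely bimodality for every $\lambda>\alpha/(1-\sqrt{1-\alpha})^2$, together with a complete description of the decrease--increase--decrease shape and a slightly more transparent treatment of the $\alpha\geq 1$ direction (the product of the roots of $q$ is nonpositive, so at most one critical point lies in $(0,\infty)$).
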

\begin{proof}
For any $x_0\in (0, \alpha)$ one may set $\lambda \equiv \lambda(x_0)= - g'_{\alpha}(x_0) / g'_{1+\alpha}(x_0)$ and obtain a function $s(x)\equiv \lambda g_{1+\alpha}(x) + g_{\alpha}(x)$ such that $s'(x_0) = 0$.  Note that since $x_0$ is between the modes of $g_{1+\alpha}$ and $g_{\alpha}$ we necessarily have $\lambda>0$.  We show that $\alpha\in (0, 1)$ is equivalent to the existence of some $x_0$ such that $s''(x_0)>0$, which indicates a local minimum.  By direct calculation we get 
$$s''(x_0)  = \lambda g''_{1+\alpha}(x_0) + g''_{\alpha}(x_0) = -\lambda'(x_0) g'_{1+\alpha}(x_0)$$
with 
$$ \lambda'(x_0) =\frac{\alpha (x_0^2 + 2(1-\alpha )x_0 - \alpha(1-\alpha))}{(\alpha x_0- x_0^2)^2}. $$
Thus $s''(x_0)$ has the opposite sign of $x_0^2 + 2(1-\alpha )x_0 - \alpha(1-\alpha)$.  If $\alpha<1$ then this quadratic is negative for sufficiently small $x_0>0$; if $\alpha \geq 1$ then it is positive for all $x_0>0$.  Thus a local minimum exists if and only if $\alpha\in (0, 1)$. 
\end{proof}

\begin{proof}[Proof of Theorem~\ref{thm2}]
Since $\alpha\in (0, 1)$, by Lemma~\ref{lem3} one can choose $\lambda>0$ such that $s(x)\equiv \lambda g_{1+\alpha}(x) +  g_{\alpha}(x)$ is bimodal, with a local minimum at $x_0>0$.  Choose $w>0$ small enough so that $s'(x_0 -w)<0$ and $s'(x_0+w)>0$. 

Let $\theta_1 = \epsilon-\delta,\, \theta_2 = \epsilon + \delta -\lambda\delta^2,\, \theta_3 = 1+\epsilon + \lambda\delta^2$, where $\epsilon$ and $\delta$ are to be determined.  We require $\epsilon>\delta>0$ and $\epsilon < 1/\lambda$ so that the weight vector $\theta$ is positive.  Let $X_i,\, i=1,2,\ldots$ be independent gamma($\alpha, 1$) variates and $Z_i\sim {\rm expo}(1)$ independently.  Define $G_0\equiv \sum_{i=1}^3 \theta_iX_i$.  Denote the density of $G_1\equiv G_0 + \sum_{i=1}^2\theta_iZ_i$ by $f_1$ and that of $G_2\equiv G_0+\sum_{i=2}^3 \theta_i Z_i$ by $f_2$.  As $\epsilon\downarrow 0$ and $\epsilon> \delta$, we have $f_1(x) \to g_{\alpha}(x)$ and $f_2(x)\to g_{1+\alpha}(x)$ pointwise in $x\in (0, \infty)$.  To show this, let $\nu= 2+3 \alpha$ and $T=\sum_{i=1}^3 X_i + \sum_{i=1}^2 Z_i$. Then $T\sim {\rm gamma}(\nu, 1)$ and we may use the independence of $T$ and $S\equiv G_1/T$ to obtain 
\begin{align}
\label{complex1}
f_1(x)  & = \frac{x^{\nu -1}}{\Gamma(\nu)} E \left[e^{-x/S} S^{-\nu}\right];\\
f'_1(x) & = \frac{\nu-1}{x} f_1(x) - \frac{x^{\nu -1}}{\Gamma(\nu)} E \left[e^{-x/S} S^{-\nu -1} \right].
\end{align}
For fixed $x>0$, the function $e^{-x/s} s^{-\nu}$ vanishes as $s\downarrow 0$ and is bounded and continuous over $s\in (0, \infty)$, achieving its maximum at $s = x/\nu$.  As $\epsilon\downarrow 0$, we have $S\to {\rm beta}(\alpha, 2 + 2\alpha)$ in distribution.  Hence $E \left[e^{-x/S} S^{-\nu}\right]$ converges, and $f_1(x) $ converges to $g_{\alpha} (x)$.  Similarly, convergence holds for the derivatives of $f_i,\ i=1,2$.  In fact, from (\ref{complex1}) we can regard $f_1(x)$ as an analytic function on the open right half of the complex plane, and the convergence just mentioned is uniform in compact subsets. 

Let $s'_\delta(x) \equiv \lambda (1 -\delta +2\lambda\delta^2) f'_2(x) + (1-\lambda\delta/2) f'_1(x)$.  By Lemma~\ref{lem2} and the chain rule, with $F_\theta(x) =\Pr(G_0\leq x)$, we have 
$$\frac{\partial}{\partial \delta}F_\theta(x) = 2\alpha\delta s'_\delta(x).$$
Since $s'_\delta(x) \to s'(x)$ as $\epsilon\downarrow 0$, we may choose $\epsilon<1/\lambda$ small enough so that, as long as $\delta <\epsilon$, we have $s'_\delta(x_0-w)<0$ and $s'_\delta(x_0+w)>0$.  Let $\eta = (\epsilon, \epsilon, 1+\epsilon)$.  Then $\eta\prec \theta$ and by the mean value theorem 
$$F_\theta(x) - F_\eta(x) = 2\alpha 
\delta\delta^*s'_{\delta^*}(x),\quad \delta^*\in (0, \delta).$$
But the right hand side is strictly negative at $x = x_0-w$ and strictly positive at $x = x_0+w$, indicating at least one sign change in $x\in (x_0 - w, x_0 + w)$.  Since $F_\theta(x) - F_\eta(x)>0$ for sufficiently small $x>0$ and $F_\theta(x) - F_\eta(x)<0$ for sufficiently large $x$, we have at least two additional sign changes, both from $+$ to $-$, in $x\in (0, x_0 - w]$ and $x\in [x_0+w, \infty)$, respectively.  
\end{proof}

\section{Main result and proof}
\begin{theorem}
\label{main.thm}
Conjecture~\ref{conj1} is valid if $\alpha \geq 1$. 
\end{theorem}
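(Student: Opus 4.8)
The plan is to bootstrap Theorem~\ref{thm1} to the full statement by an induction that forces a general majorization relation into the two–coordinate situation already under control, gluing the pieces with the same crossing–point alignment used in the proof of Theorem~\ref{thm1}. Throughout, sort both vectors increasingly and assume $\theta$ is not a permutation of $\eta$. First I would dispose of hypothesis (b) of Theorem~\ref{thm1}, which is automatic here: since $\log$ is strictly concave, $\eta\prec\theta$ gives $\sum_i\log\eta_i\geq\sum_i\log\theta_i$ with strict inequality unless $\theta$ is a permutation of $\eta$, so $\prod_i\eta_i>\prod_i\theta_i$ always holds. The genuine gap is hypothesis (a): majorization does \emph{not} force the sorted differences $\theta_{(i)}-\eta_{(i)}$ to have a single sign change. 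For instance, reversing $(0,2,2,4)\succ(1,1,3,3)$ gives the sign pattern $+,-,+,-$, so Theorem~\ref{thm1} cannot be invoked directly and an interpolation is required.

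I would argue by induction on $n$ (equivalently, on the number of coordinates in which $\theta$ and $\eta$ differ after deleting common ones; this deletion is legitimate because for $\alpha\geq 1$ the gamma density is log-concave, so convolving both vectors by the shared part preserves the crossing structure). At the inductive step, mimic the construction in the proof of Theorem~\ref{thm1}: form an auxiliary vector $\tau=\tau(\delta)$ obtained from $\theta$ by a single Robin–Hood transfer toward $\eta$, carrying a free scalar $\delta$, chosen so that $\tau$ matches one coordinate of $\eta$ while $\eta\prec\tau\prec\theta$ is maintained. Then $\tau$ and $\theta$ differ in exactly two coordinates, so (the two–coordinate differences being trivially in single–sign–change position, and $\prod\tau_i>\prod\theta_i$ following from $\tau\prec\theta$) Theorem~\ref{thm1} gives that $F_\tau$ crosses $F_\theta$ exactly once, from below, at some $x_*$. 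On the other side, $\eta\prec\tau$ and the two share a coordinate, so after removing it the pair is $(n-1)$–dimensional and the induction hypothesis, together with the log-concavity convolution fact, gives that $F_\eta$ crosses $F_\tau$ exactly once, from below, at some $x^*$.

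The heart of the matter, and the step I expect to be the main obstacle, is the \emph{gluing}, because single crossing from below is not transitive: with $\eta\prec\tau\prec\theta$ and crossings at $x^*$ and $x_*$, the composite $F_\eta$ versus $F_\theta$ is pinned to a single crossing only when $x_*=x^*$, since off the diagonal an extra oscillation may appear between the two crossing points. This is precisely why the proof of Theorem~\ref{thm1} sweeps $\delta$: as $\delta$ increases $F_\tau$ increases stochastically, so $x_*$ increases while $x^*$ decreases, and the two are pinned at the endpoints of the admissible $\delta$–interval by Lemma~\ref{lem1} (stochastic dominance, because $\log$ of one vector majorizes the other at the extremes of the sweep), whence continuity produces a $\delta$ with $x_*=x^*=:c$. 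At that value $F_\theta>F_\tau>F_\eta$ for $x<c$ and the reverse for $x>c$, which is exactly the unique crossing of $F_\eta$ and $F_\theta$ from below demanded by Conjecture~\ref{conj1}. The work beyond Theorem~\ref{thm1} is to show that for an \emph{arbitrary} majorization pair such a one–parameter family $\tau(\delta)$ can be chosen so that both pairwise crossings persist, the chain $\eta\prec\tau(\delta)\prec\theta$ is preserved across the whole sweep, and each inductive step strictly reduces the number of differing coordinates; once these are verified, the alignment argument and the log-concavity reduction close the induction and yield the theorem.
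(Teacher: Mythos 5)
Your outer shell---induction on $n$, an interpolating family $\tau(\delta)$, and alignment of the two crossing points by a continuity sweep---is indeed the skeleton of the paper's argument (Theorem~\ref{s.main}), and your preliminary observations are correct: $\prod_i\eta_i>\prod_i\theta_i$ is automatic from $\eta\prec\theta$ by strict concavity of $\log$, and the failure of the single-sign-change condition (a) of Theorem~\ref{thm1} for general majorization pairs is exactly the obstacle. But the step you flag as ``the main obstacle'' is not a routine adaptation of the sweep in Theorem~\ref{thm1}, and as described it does not close. First, there is a tension in your construction: if $\tau$ is pinned to match a coordinate of $\eta$ (so that the induction hypothesis applies to the pair $(\eta,\tau)$ after stripping the shared coordinate), there is no free parameter left to sweep; if you free up $\delta$, then $\tau(\delta)$ generically shares no coordinate with $\eta$ and differs from $\eta$ in as many coordinates as $\theta$ does, so the induction hypothesis is unavailable on the $\eta$-versus-$\tau$ side. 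Second, the endpoint degeneracies that drive $x_*\downarrow 0$ or $x^*\downarrow 0$ in the proof of Theorem~\ref{thm1} come from Lemma~\ref{lem1} (log-majorization implies stochastic dominance), and the configurations where that lemma bites lie \emph{outside} the cone $\{\tau:\ \eta\prec\tau\prec\theta\}$; a sweep confined to that cone, as you propose, cannot pin the crossing points at its ends.

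Because the interpolants must leave the majorization cone, the induction hypothesis itself has to be strengthened---this is why the paper introduces the order $\prec_{\rm V}$, proves Proposition~\ref{prop.V}, and runs the induction on the generalized statement Theorem~\ref{s.main} rather than on Conjecture~\ref{conj1} directly. Moreover, in the middle of the paper's sweep the interpolants $\tau(t_1,t_2)$ of (\ref{tau.def}) differ from $\theta$ with a sign pattern having three changes, so Theorem~\ref{thm1} does not apply to them; one needs the genuinely new local result Lemma~\ref{lem.loc}, whose proof (the analytic-continuation and root-counting argument of Lemma~\ref{analytic}, the unimodality of mixtures in Lemma~\ref{uni.modal}, the $\leq_{\rm slr}$ order and Propositions~\ref{slr.simple}--\ref{comp.prop}, and the log-concavity Lemmas~\ref{lem.lc1}--\ref{lem.lc2}) is the technical core of the paper and has no counterpart in your proposal. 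Finally, even with all of this, the sweep can halt at an interior parameter value where the interpolant shares no coordinate with $\eta$; the paper must then iterate the construction and extract a maximal element of the set $\Omega_0$ to force the process to the boundary where the induction hypothesis applies. In short, your plan reproduces the easy part of the proof and leaves the hard part---the local lemma and the machinery that makes the sweep legitimate outside the majorization cone---unaddressed.
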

The rest of this paper is devoted to a proof of the above result.  We extend the techniques of the previous section.  By analyzing the distribution function crossing patterns of gamma convolutions we reduce the problem to a particular configuration of the weight vectors $\theta$ and $\eta$ that are sufficiently close.  For this local case, relationship between crossing points to modes of a mixture of gamma convolutions is explored.  We introduce a new stochastic order and derive monotonicity properties concerning densities of gamma convolutions.  These tools further reduce the problem, leading to Theorem~\ref{main.thm}.  

Lemma~\ref{lem.loc} shows that UCC holds locally for a particular configuration.  \begin{lemma}
\label{lem.loc}
Suppose $\alpha\geq 1$.  Given an index $1<k<n-1$ let $0<\theta_1\leq \cdots \leq \theta_{k-1}<\theta_k\leq \theta_{k+1}< \theta_{k+2}\leq\cdots \leq \theta_n$ and $\delta_i>0,\ i=1,\ldots, n$, and let $\eta$ be defined as follows.
$$
\eta_i = \begin{cases} &\theta_i + \delta_i,\quad i=1,\ldots, k-1;\\
 & \theta_i - \sum_{j=1}^{k-1} \delta_j, \quad i=k;\\
 & \theta_i + \sum_{j=k+2}^n \delta_j, \quad i=k+1;\\
 & \theta_i -\delta_i,\quad i=k+2,\ldots, n.
\end{cases}
$$
Then $F_\eta$ crosses $F_\theta$ exactly once from below if $\sum_{i\neq k, k+1} \delta_i $ is small enough. 
\end{lemma}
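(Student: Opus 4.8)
The plan is to travel from $\theta$ to $\eta$ along the straight path $\theta(t)=(1-t)\theta+t\eta$, $t\in[0,1]$, and to rewrite the \emph{entire} difference $F_\eta-F_\theta$ as the $x$-derivative of a single nonnegative mixture of gamma convolution densities, so that the whole statement collapses to a unimodality question. First I would observe that the perturbation splits into balanced pairwise transfers: for each $i<k$ a mass $\delta_i$ moves from $\theta_k$ to $\theta_i$, and for each $i>k+1$ a mass $\delta_i$ moves from $\theta_i$ to $\theta_{k+1}$. Because each of these moves mass from a larger coordinate to a smaller one, $\eta\prec\theta$; and when $\sum_{i\neq k,k+1}\delta_i$ is small the ordering of the coordinates is preserved for every $t\in[0,1]$, so along the path $\theta_k(t)-\theta_i(t)>0$ for $i<k$ and $\theta_i(t)-\theta_{k+1}(t)>0$ for $i>k+1$.

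Differentiating $F_{\theta(t)}(x)$ in $t$ and applying Lemma~\ref{lem2} to each transfer separately (with the remaining coordinates absorbed into $G$) gives, since $\sum_i \frac{d}{dt}\theta_i(t)=0$ lets the rates pair up,
$$\frac{d}{dt}F_{\theta(t)}(x)=-\alpha\, M_t'(x),\qquad M_t(x)=\sum_{i<k}\delta_i(\theta_k(t)-\theta_i(t))\,f_{i,k}(x;t)+\sum_{i>k+1}\delta_i(\theta_i(t)-\theta_{k+1}(t))\,f_{k+1,i}(x;t),$$
where $f_{i,j}(\cdot;t)$ is the density of $\sum_\ell \theta_\ell(t)X_\ell$ after the shapes of coordinates $i,j$ are raised from $\alpha$ to $1+\alpha$ (that is, $X_i,X_j$ are replaced by $X_i+Z_i,X_j+Z_j$), and by the step above every coefficient is positive. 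Integrating in $t$ and interchanging with $d/dx$ yields the \emph{exact} identity
$$F_\eta(x)-F_\theta(x)=-\alpha\,\mathcal{M}'(x),\qquad \mathcal{M}(x)\equiv\int_0^1 M_t(x)\,dt,$$
with $\mathcal M$ a nonnegative mixture of gamma convolution densities. Hence the sign changes of $F_\eta-F_\theta$ are precisely those of $\mathcal M'$, and it suffices to show that $\mathcal M$ is unimodal: then $\mathcal M'$ passes from $+$ to $-$ exactly once, and $F_\eta-F_\theta$ passes from $-$ to $+$ exactly once.

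The hard part is the unimodality of $\mathcal M$ for $\alpha\ge1$; this is the multivariate analogue of Lemma~\ref{lem3} and is exactly where $\alpha\ge1$ enters. Each $f_{i,j}$ is a convolution of gammas of shapes $\ge\alpha\ge1$, hence log-concave, and any two of these densities differ only in which two coordinates are boosted. I would factor out of each term the convolution over the never-boosted coordinates, a fixed log-concave density, and use the fact that convolution with a log-concave density preserves unimodality; this reduces the claim to unimodality of a mixture of ``two-boost'' densities built around the central coordinates $\theta_k,\theta_{k+1}$ with hyperexponential mixing over the boosted scales. The essential point, as in Lemma~\ref{lem3}, is that raising a shape from $\alpha$ to $1+\alpha$ displaces the location by an amount dominated by the spread precisely when $\alpha\ge1$ (the quadratic $x^2+2(1-\alpha)x-\alpha(1-\alpha)$ never changes sign), so no second mode can be created; for $\alpha<1$ the same mechanism produces bimodality, which is the source of the counterexample behind Theorem~\ref{thm2}. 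Making this rigorous for a mixture of genuine gamma convolutions, rather than a single pair of gamma densities, is the main obstacle, and I expect to control it with the density monotonicity properties and the stochastic order introduced for this purpose.

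Finally I would fix the direction and the count from the boundary behavior, which is unambiguous. Since the perturbation strictly increases the product, $\prod_i\eta_i>\prod_i\theta_i$, the ratio computation as in Proposition~\ref{prop1} gives $F_\eta(x)<F_\theta(x)$ for small $x>0$; and because $n>k+1$ we have $\max_i\eta_i=\eta_n<\theta_n=\max_i\theta_i$, so the $\theta$-tail is heavier and $F_\eta(x)>F_\theta(x)$ for large $x$. These agree with the single $-$-to-$+$ sign change of $-\alpha\,\mathcal M'$, and therefore $F_\eta$ crosses $F_\theta$ exactly once, from below.
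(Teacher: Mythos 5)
Your integral identity is correct and is in fact the paper's own starting point (equation (\ref{eqn.inte}) inside Lemma~\ref{analytic}): each pairwise transfer contributes, via Lemma~\ref{lem2}, a positively weighted $x$-derivative of a ``boosted'' gamma convolution density, and integrating along the path gives $F_\eta-F_\theta=-\alpha\,\mathcal{M}'$. The gap begins at the next step. Your $\mathcal{M}=\int_0^1 M_t\,{\rm d}t$ is a \emph{continuum} mixture of densities taken at the perturbed weights $\theta(t)$ for every $t\in[0,1]$, so ``it suffices to show $\mathcal{M}$ is unimodal'' demands unimodality of arbitrary mixtures over this entire $t$-indexed family; that is strictly stronger than anything the paper proves, and your argument never invokes the smallness of $\sum_{i\neq k,k+1}\delta_i$ at this stage even though the lemma is genuinely local. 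The paper instead assumes unimodality only for mixtures of the finitely many densities at $\delta=0$ (Lemma~\ref{uni.modal}) and transfers this to small perturbations by a root-counting argument in a thin complex strip (Lemma~\ref{analytic}): normalize $F_{\eta(l)}-F_\theta$ by $\alpha L(\delta(l))$, pass to a subsequence so the coefficients converge to some $\lambda$, use uniform convergence of the analytic densities on compact subsets, confine all real crossing points to a bounded interval via likelihood-ratio bounds, and conclude that the number of zeros equals the single simple zero of $\sum_k\lambda_k f_k'(\cdot|0)$. Some such limiting device is unavoidable: unimodality is not stable under small perturbations near a saddle, which is precisely why the hypothesis ``strictly negative second derivative at the mode and no saddle points'' appears. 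To salvage your formulation you would still need this step to pass from unimodality at $\delta=0$ to a single sign change of $\mathcal{M}'$ for small $\delta$.

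The second, larger gap is the unimodality itself. The quadratic $x^2+2(1-\alpha)x-\alpha(1-\alpha)$ governs mixtures of the two plain gamma densities $g_\alpha$ and $g_{1+\alpha}$ (Lemma~\ref{lem3}); it says nothing directly about mixtures of $n$-fold gamma convolutions in which different pairs of coordinates have been boosted, and ``no second mode can be created'' is an intuition, not an argument. The paper devotes the rest of Section~3 to exactly this point: the supplemented likelihood ratio order, its preservation under PF3 convolution (Proposition~\ref{slr.prop}), the sandwich criterion reducing a two-density mixture to a mixture of slr-extremal densities (Proposition~\ref{comp.prop}), the log-concavity Lemmas~\ref{lem.lc1} and~\ref{lem.lc2}, and the device in Lemma~\ref{uni.modal} of absorbing the mixture over the many indices $j$ into hyperexponential variables $Y_1,Y_2$ so that only a mixture of two densities remains. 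Your plan points in the right direction and correctly isolates the crux, but as written it defers both the perturbation step and the unimodality step, which together constitute essentially the whole proof.
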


To deduce Theorem~\ref{main.thm} from Lemma~\ref{lem.loc}, we build on our proof of Theorems~\ref{thm1} and \ref{thm2}.  Let us introduce a majorization-type ordering which may be of independent interest.  As usual we write $\theta_{(1)},\theta_{(2)},..., \theta_{(n)}$ as $\theta$ rearranged in increasing order. 
\begin{definition} 
We say a real vector $\theta$ {\it V-majorizes} $\eta$, written as $\eta\prec_{\rm V} \theta$, if there exists $\tilde{\theta}$ such that $\eta\prec \tilde{\theta}$ and  indices $1\leq k_1, k_2\leq n$ such that 
\begin{equation}
\label{major.V}
\theta_{(i)}\begin{cases} \leq  &\tilde{\theta}_{(i)}\leq \eta_{(i)},\quad 1\leq i\leq k_1;\\
 = & \tilde{\theta}_{(i)},\quad k_1<i<k_2;\\
\geq & \tilde{\theta}_{(i)}\geq \eta_{(i)},\quad k_2\leq i \leq n.
\end{cases}
\end{equation}
\end{definition}

Simply put, $\theta$ V-majorizes $\eta$ if $\theta$ is obtained from a vector $\tilde{\theta}$ that majorizes $\eta$ by increasing (and decreasing) the largest (smallest) few components of $\tilde{\theta}$ which are already larger (smaller) than those of $\eta$.  Let us record some useful properties of $\prec_{\rm V}$.
\begin{proposition}
\label{prop.V}
Let $\eta,\, \theta$ be positive vectors such that $\eta\prec_{\rm V} \theta$.  (a) If $\prod_{i=1}^n (\eta_i/\theta_i) \geq 1$ then $\log \eta \prec^w \log\theta$.  (b) If $\prod_{i=1}^n (\eta_i/\theta_i) \leq 1$ then $F_\eta \leq_{\rm st} F_\theta$. 
\end{proposition}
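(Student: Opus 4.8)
The plan is to reduce both parts to partial-sum inequalities for $\log\eta$ and $\log\theta$, the engine being a single one-sided family of inequalities that uses neither product hypothesis. Throughout I assume, as in the definition, that $\theta$, $\tilde\theta$, $\eta$ are written in increasing order, with $\tilde\theta$ the witness and $k_1,k_2$ the indices of \eqref{major.V}. The claim I would establish first is ($\star$): for every $k$ with $1\le k\le k_2-1$, $\sum_{i=1}^k \log\eta_i \ge \sum_{i=1}^k \log\theta_i$. For $k\le k_1$ this is immediate, since $\theta_i\le\tilde\theta_i\le\eta_i$ there and the inequality holds term by term. For $k_1<k\le k_2-1$ I would use that $\theta_i=\tilde\theta_i$ on the middle block and $\theta_i\le\tilde\theta_i$ on the lower block, so $\sum_{i=1}^k \log\theta_i\le \sum_{i=1}^k\log\tilde\theta_i$; it then remains to compare $\log\tilde\theta$ with $\log\eta$. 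Since $\eta\prec\tilde\theta$ and $\log$ is increasing and concave, the standard majorization inequality for concave functions (Marshall, Olkin and Arnold 2009) yields the weak supermajorization $\log\eta\prec^{\rm w}\log\tilde\theta$, that is $\sum_{i=1}^k\log\tilde\theta_i\le\sum_{i=1}^k\log\eta_i$ for all $k$; chaining the two bounds gives ($\star$). I expect this middle range to be the main obstacle: one must convert the \emph{additive} majorization $\eta\prec\tilde\theta$ into a \emph{multiplicative} partial-sum statement across the block where $\theta$ and $\tilde\theta$ agree, and concavity of $\log$ is exactly what makes it work in the one direction needed — the corresponding \emph{upper} partial-sum inequalities genuinely fail, so the estimate is intrinsically one-sided.

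Given ($\star$), part (a) is a short top-peeling argument. For $k\le k_2-1$ the required inequality $\sum_{i=1}^k\log\eta_i\ge\sum_{i=1}^k\log\theta_i$ is exactly ($\star$). For $k\ge k_2-1$ I would write $\sum_{i=1}^k\log(\eta_i/\theta_i)=\log\prod_{i=1}^n(\eta_i/\theta_i)-\sum_{i=k+1}^n\log(\eta_i/\theta_i)$; the first term is $\ge 0$ by the hypothesis $\prod(\eta_i/\theta_i)\ge 1$, while each summand of the second term is $\le 0$ because $\eta_i\le\tilde\theta_i\le\theta_i$ on the upper block $i\ge k_2$. Hence the whole expression is $\ge 0$. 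Since the two ranges cover all $k$, we obtain $\sum_{i=1}^k\log\eta_i\ge\sum_{i=1}^k\log\theta_i$ for every $k$, which is precisely $\log\eta\prec^{\rm w}\log\theta$.

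For part (b) I would first record a dominance lemma upgrading Lemma~\ref{lem1}: if $\log\eta\prec_{\rm w}\log\theta$ (weak submajorization) then $F_\eta\le_{\rm st}F_\theta$. Indeed, by the standard characterization of weak submajorization there is $u$ with $\log\eta\le u$ coordinatewise and $u\prec\log\theta$; setting $\theta'=e^{u}$ gives $\eta\le\theta'$ coordinatewise and $\log\theta'\prec\log\theta$, so $F_\eta\le_{\rm st}F_{\theta'}$ by monotonicity of the weighted sum in the weights (the $X_i$ are nonnegative), $F_{\theta'}\le_{\rm st}F_\theta$ by Lemma~\ref{lem1}, and the two combine by transitivity. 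It thus suffices to prove $\log\eta\prec_{\rm w}\log\theta$, i.e. $\sum_{i=j}^n\log\eta_i\le\sum_{i=j}^n\log\theta_i$ for all $j$. For $j\ge k_2$ this holds term by term since $\eta_i\le\theta_i$ on the upper block. For $j\le k_2-1$ I would bottom-peel: $\sum_{i=j}^n\log(\eta_i/\theta_i)=\log\prod_{i=1}^n(\eta_i/\theta_i)-\sum_{i=1}^{j-1}\log(\eta_i/\theta_i)$, where the first term is $\le 0$ by $\prod(\eta_i/\theta_i)\le 1$ and the second sum is $\ge 0$ by ($\star$), legitimate because $j-1\le k_2-1$. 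Hence the expression is $\le 0$, giving $\log\eta\prec_{\rm w}\log\theta$ and therefore $F_\eta\le_{\rm st}F_\theta$. The pleasant structural point is that the hard estimate ($\star$) is proved once, without either product hypothesis, and the two parts diverge only in whether one peels from the top (using $\prod\ge 1$) or from the bottom (using $\prod\le 1$); the degenerate cases $k_1=0$, $k_2=n+1$, or an empty middle block are routine, the associated sums being read as empty.
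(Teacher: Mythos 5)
Your argument is correct, and part (a) is essentially the paper's own proof: both establish the lower partial-product inequalities $\prod_{i=1}^l\eta_i\geq\prod_{i=1}^l\theta_i$ for $l<k_2$ by chaining $\theta_i\leq\tilde{\theta}_i$ with $\log\eta\prec^{w}\log\tilde{\theta}$ (which follows from $\eta\prec\tilde{\theta}$ and concavity of $\log$), and both handle $l\geq k_2-1$ by peeling off the nonpositive terms $\log(\eta_i/\theta_i)$, $i\geq k_2$, from the total product. For part (b), however, you take a genuinely different route. The paper constructs an explicit intermediate vector $\theta^*$ by geometric interpolation $\theta_i^{1-\lambda}\eta_i^{\lambda}$ on the top block $i\geq k_2$, with $\lambda$ chosen so that $\prod_{i=1}^n(\eta_i/\theta_i^*)=1$; the part (a) reasoning then gives exact log-majorization $\log\eta\prec\log\theta^*$, and Lemma~\ref{lem1} together with $\theta^*\leq\theta$ yields $F_\eta\leq_{\rm st}F_{\theta^*}\leq_{\rm st}F_\theta$. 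You instead prove the weak submajorization $\log\eta\prec_{\rm w}\log\theta$ directly from your $(\star)$ and the product hypothesis, and then upgrade Lemma~\ref{lem1} via the standard decomposition of $\prec_{\rm w}$ (existence of $u$ with $\log\eta\leq u$ coordinatewise and $u\prec\log\theta$), which places the intermediate vector above $\eta$ rather than below $\theta$. Both proofs rest on the same two ingredients, namely Lemma~\ref{lem1} and coordinatewise monotonicity of the weighted sum in the weights, but your organization isolates a reusable lemma ($\log\eta\prec_{\rm w}\log\theta$ implies $F_\eta\leq_{\rm st}F_\theta$) and makes the two parts pleasingly symmetric (top-peeling versus bottom-peeling from a single family of partial-sum inequalities), at the cost of invoking the abstract characterization of weak submajorization where the paper simply writes down the interpolant explicitly.
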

\begin{proof}
Assume the coordinates of $\theta$ (respectively, $\eta$) are arranged in increasing order, and assume $\theta\neq \eta$.  Let $\tilde{\theta}$ (also arranged in increasing order) be such that $\eta\prec \tilde{\theta}$ and (\ref{major.V}) is satisfied.  

Part (a). Note that $\tilde{\theta}_i \geq \theta_i$ for $1\leq i<k_2$.  From $\eta\prec\tilde{\theta}$ we get $\log\eta\prec^w \log\tilde{\theta}$ and 
$$\prod_{i=1}^l \eta_i \geq \prod_{i=1}^l \tilde{\theta}_i \geq \prod_{i=1}^l \theta_i,\quad 1\leq l<k_2.
$$
For $i\geq k_2$ we have $\eta_i \leq \theta_i$ which means $\prod_{i=1}^l (\eta_i /\theta_i)$ between $l=k_2-1$ and $l=n$ is minimized at $l=n$.  Thus, to ensure $\log \eta \prec^w \log \theta$ we only need $\prod_{i=1}^n \eta_i \geq \prod_{i=1}^n \theta_i$. 

Part (b). We may define 
$$\theta^*_i \equiv \begin{cases} 
\theta_i & i< k_2, \\
\theta_i^{1-\lambda} \eta_i^{\lambda} & i\geq k_2;
\end{cases}
\quad \quad 
\lambda = \frac{\sum_{i=1}^n \log (\theta_i/\eta_i) }{\sum_{i=k_2}^n \log (\theta_i / \eta_i)}.$$ 
Then $\lambda\in [0, 1],\, \theta^*\leq \theta$ (coordinate-wise), and $\prod_{i=1}^n (\eta_i/\theta^*_i) = 1$. The reasoning of part (a) yields $\log\eta \prec \log \theta^*$.  By Lemma~\ref{lem1}, $F_\eta\leq_{\rm st} F_{\theta^*}\leq_{\rm st} F_\theta$. 
\end{proof}

With the notion of $\prec_{\rm V}$ we can suitably generalize Conjecture~\ref{conj1} and prove it, building on a special case, Lemma~\ref{lem.loc}, which we will establish later. 
\begin{theorem}
\label{s.main}
Suppose $\alpha\geq 1$, and $\eta,\, \theta$ are positive weight vectors such that $\eta \prec_{\rm V} \theta$, $\theta_{(n)}> \eta_{(n)}$ and $\prod_{i=1}^n (\eta_i/\theta_i) >1$.  Then there exists $x_0\in (0, \infty)$ such that $F_\eta(x) < F_\theta(x)$ for $x\in (0, x_0)$ and 
$F_\eta(x) > F_\theta(x)$ for $x>x_0$. 
\end{theorem}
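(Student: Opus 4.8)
The plan is to reduce the global statement to the local one, Lemma~\ref{lem.loc}, by a continuous deformation of $\theta$ into $\eta$ while tracking crossing points, exactly as in the proof of Theorem~\ref{thm1}. The two strict hypotheses $\prod_i(\eta_i/\theta_i)>1$ and $\theta_{(n)}>\eta_{(n)}$ serve only to fix the behaviour at the two ends of the $x$-range, so the real content is uniqueness. First I would record the boundary behaviour. Near the origin the ratio $F_\theta(x)/F_\eta(x)$ tends to $\left(\prod_i\eta_i/\prod_i\theta_i\right)^{\alpha}>1$, the $n$-fold analogue of the limit computed in the proof of Proposition~\ref{prop1}, so $F_\eta<F_\theta$ on some interval $(0,x_0)$. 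As $x\to\infty$ the survival function of $\sum_i\theta_iX_i$ decays like $e^{-x/\theta_{(n)}}$ against $e^{-x/\eta_{(n)}}$ for $\sum_i\eta_iX_i$; since $\theta_{(n)}>\eta_{(n)}$ this forces $F_\eta>F_\theta$ for all large $x$. Hence $F_\eta-F_\theta$ changes sign from $-$ to $+$ at least once, and it remains to exclude any further sign change.

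For uniqueness I would argue by an induction that successively simplifies the configuration, terminating at Lemma~\ref{lem.loc} as the base case in which $\eta$ is a small perturbation of $\theta$ of the prescribed shape. For the inductive step, following Theorem~\ref{thm1}, I would introduce a one-parameter family $\tau=\tau(\delta)$ interpolating between $\theta$ and $\eta$: $\tau$ is chosen to agree with $\eta$ in at least one coordinate, so that $(\tau,\eta)$ involves strictly fewer moving coordinates and falls under the induction hypothesis or Lemma~\ref{lem.loc}, while $(\theta,\tau)$ differs in a controlled, simpler configuration to which the induction hypothesis also applies. The crucial requirement is that both reduced pairs again satisfy the V-majorization relation, the strict product inequality, and the strict largest-coordinate inequality; I would verify this by carrying the witness $\tilde{\theta}$ and the indices $k_1,k_2$ of the V-majorization through the deformation and invoking part (a) of Proposition~\ref{prop.V} to certify the resulting log-majorizations. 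By the induction hypothesis $F_\tau$ then crosses $F_\theta$ at most once from below, say at $x_*(\delta)$, and $F_\eta$ crosses $F_\tau$ at most once from below, say at $x^*(\delta)$.

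I would then stitch the two crossings together by the monotonicity-and-continuity device of Theorem~\ref{thm1}. The family $\tau(\delta)$ is arranged to be stochastically increasing in $\delta$, so $x_*(\delta)$ increases and $x^*(\delta)$ decreases; uniqueness of each crossing together with continuity of the distribution functions makes both $x_*$ and $x^*$ continuous in $\delta$. At one end of the $\delta$-range the product inequality degenerates and Proposition~\ref{prop.V}(b) (equivalently Lemma~\ref{lem1}) gives $F_\tau\leq_{\rm st}F_\theta$, whence $x_*\downarrow 0$ while $x^*$ stays bounded away from $0$; at the other end $\log\eta\prec\log\tau$ gives $F_\tau\geq_{\rm st}F_\eta$, whence $x^*\downarrow 0$ while $x_*$ stays bounded away from $0$. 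Thus $x_*-x^*$ changes sign, and by the intermediate value theorem there is a $\delta_0$ with $x_*(\delta_0)=x^*(\delta_0)=:x_0$. Because $\alpha\geq 1$ makes the gamma density log-concave, the two ``from below'' crossings can only coincide rather than interleave, yielding $F_\theta>F_\tau>F_\eta$ on $(0,x_0)$ and the reverse on $(x_0,\infty)$; this is precisely a single crossing of $F_\eta$ and $F_\theta$ from below, completing the step.

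I expect the main obstacle to be the construction of the interpolating vector $\tau(\delta)$ so that \emph{both} descendant pairs $(\theta,\tau)$ and $(\tau,\eta)$ simultaneously inherit all three hypotheses — the V-majorization relation with an explicit witness, the strict inequality $\prod_i(\eta_i/\theta_i)>1$, and the strict largest-coordinate inequality — while the two ends of the $\delta$-range still produce the exact stochastic dominances needed to initialize and terminate the stitching. Propagating the witness $(\tilde{\theta},k_1,k_2)$ and in particular confirming that the eventual base configuration is of the special form required by Lemma~\ref{lem.loc} is the delicate bookkeeping. The degenerate situations — coordinate ties, the boundary index $k=n$ as in Theorem~\ref{thm1}, and coordinates coalescing during the deformation — will also need to be dispatched separately, much as in Remark~1.
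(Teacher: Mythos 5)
Your boundary analysis (sign near $0$ from $\prod_i(\eta_i/\theta_i)>1$, sign near $\infty$ from $\theta_{(n)}>\eta_{(n)}$) and your overall intent — induct on $n$, deform $\theta$ toward $\eta$, use Lemma~\ref{lem.loc} for the genuinely new configuration — match the paper's. But the core of your plan, a \emph{single} one-parameter family $\tau(\delta)$ with the $x_*$/$x^*$ stitching copied from Theorem~\ref{thm1}, has a gap you flag but do not close, and it is fatal in the hard case. Write $j=\min\{i:\theta_{(i)}>\eta_{(i)}\}$ and $k=\max\{i:\theta_{(i)}<\eta_{(i)}\}$. When $k>j$ (multiple sign changes of $\eta_{(i)}-\theta_{(i)}$), any $\tau$ that agrees with $\eta$ in a coordinate useful for the induction must perturb $\theta$ at \emph{both} the low indices ($i<j$, compensating at $j$) and the high indices ($i>k$, compensating at $k$). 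For such $\tau$ the pair $(\theta,\tau)$ again has multiple sign changes, so it is not covered by Theorem~\ref{thm1}; it lives at the same $n$ with no shared coordinate, so the induction hypothesis does not apply; and it is not a small perturbation, so Lemma~\ref{lem.loc} does not apply. Likewise, for intermediate $\delta$ the pair $(\tau(\delta),\eta)$ is a generic V-majorization pair at dimension $n$ — exactly the statement being proved — so requiring ``$F_\eta$ crosses $F_{\tau(\delta)}$ at most once for every $\delta$'' is circular. Relatedly, Lemma~\ref{lem.loc} cannot serve as the ``base case'' of an induction on $n$: there is no well-founded reduction from an arbitrary pair to a small perturbation.

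The paper escapes this with three devices you would need. First, a \emph{two}-parameter family $\tau(t_1,t_2)$ (equation (\ref{tau.def})): on each axis ($t_2=0$ or $t_1=0$) the pair $(\theta,\tau)$ has a single sign change and Theorem~\ref{thm1} applies, and the two axes are bridged near the origin by Lemma~\ref{lem.loc}, giving a connected path $\rho(s)$, $s\in[-3,3]$, of weight vectors each crossing $F_\theta$ exactly once from below, running from a $\gamma$ with $F_\gamma\leq_{\rm st}F_\theta$ to a $\tilde\gamma$ with $F_{\tilde\gamma}\geq_{\rm st}F_\theta$. Second, instead of stitching $x_*=x^*$, one picks a crossing point $x_*$ of $F_\eta$ and $F_\theta$ and uses continuity along the path to find $s_*$ with $F_{\rho(s_*)}$ crossing $F_\theta$ at precisely $x_*$; if $\rho(s_*)$ shares a coordinate with $\eta$ (the outer ranges of $s$), the induction hypothesis applied to $(\rho(s_*),\eta)$ finishes. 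Third, when $s_*$ lands in the middle and $\rho(s_*)=\tau(t^*)$ shares no coordinate with $\eta$, the paper iterates the whole construction with $\tau(t^*)$ in place of $\theta$ and runs a maximality argument over the compact set $\Omega_0$, using Proposition~\ref{prop.V} to force the maximal element onto the boundary $t_i=c_i$, where a coordinate of $\eta$ is matched and induction finally applies. Without these, your deformation cannot be initialized, propagated, or terminated in the case $k>j$.
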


\begin{proof}[Proof of Theorem~\ref{s.main}]
We will use induction on $n$.  The case of $n=3$ is covered by Theorem~\ref{thm1}.  For $n\geq 4$ assume $\theta_1\leq \cdots\leq \theta_n,\ \eta_1\leq\cdots\leq \eta_n$, and $\theta_i\neq \eta_i$ for all $i$.  Let us define
$$j \equiv \min\{i:\ \theta_i > \eta_i,\, 1\leq i\leq n\};\quad k \equiv \max\{i:\ \theta_i < \eta_i,\, 1\leq i\leq n\}.$$
These indices must exist because $\theta_n>\eta_n$ and $\theta_1<\eta_1$.  Moreover, we have $j \geq 2,\ k\leq n-1,\ j\neq k$ and $k\geq j-1$.  Consider two cases:

(a) $k=j-1$.  Then $\log(\eta_i/\theta_i)$ has only one sign change and the claim follows from Theorem~\ref{thm1}. 

(b) $k > j$.  Since $\eta \prec_{\rm V} \theta$, there exists $\tilde{\theta}$ such that $\eta\prec \tilde{\theta}$ and (\ref{major.V}) holds.  In (\ref{major.V}) we necessarily have $k_1\leq j-1$ and $k_2\geq k+1$, and hence $\tilde{\theta}_i =\theta_i,\ i=j, \ldots, k.$  Let $\delta= \eta -\tilde{\theta}$.  Then $\delta_i\geq 0$ for $i<j$ and $\delta_i \leq 0$ for $i>k$.  Define a weight vector $\tau $ parameterized by $t_1, t_2$ as follows. 
\begin{equation}
\label{tau.def}
\tau_i \equiv \begin{cases} 
\theta_i + t_1 \delta_i & i< j,\\
\theta_i - t_1 \sum_{l=1}^{j-1} \delta_l & i=j,\\
\theta_i & j<i<k;\\
\theta_i - t_2 \sum_{l=k+1}^n \delta_l & i=k,\\
\theta_i + t_2 \delta_i & i> k.
\end{cases}
\end{equation}
We require $0\leq t_1\leq c_1$ and $0\leq t_2\leq c_2$ where 
$$c_1 = \frac{\theta_j - \eta_j}{\sum_{i=1}^{j-1} \delta_i};\quad c_2 = \frac{\theta_k - \eta_k}{\sum_{i=k+1}^{n} \delta_i}.$$
Because $\eta\prec\tilde{\theta}$, we have $\sum_{i=1}^{j-1} \delta_i \geq -\delta_j > 0,\ \sum_{i=k+1}^{n} \delta_i \leq -\delta_k <0$ and $c_1, c_2\in (0, 1)$.  Define $\tilde{\tau} = \tau + \tilde{\theta} - \theta$.  Using $t_i\leq c_i,\ i=1,2$, we can show that components of $\tilde{\tau}$ are in increasing order, and that $\eta\prec \tilde{\tau}$.  Moreover, $\tau_i\leq \tilde{\tau}_i \leq \eta_i$ for $i<j$ and $\tau_i\geq \tilde{\tau}_i \geq \eta_i$ for $i>k$.  It follows that $\eta\prec_{\rm V} \tau$.  Also, from (\ref{tau.def}) and (i) $\tau_i\leq \eta_i \leq \eta_j\leq \tau_j$ for $i<j$ and (ii) $\tau_i\geq \eta_i \geq \eta_k\geq \tau_k$ for $i>k$, we can deduce that $\tau\prec \theta$,  which yields $\prod_{i=1}^n (\tau_i/\theta_i)\geq 1$.  In fact, if we have $t,\, \tilde{t}$ such that $0\leq t_i\leq \tilde{t}_i\leq c_i,\ i=1,2$, then $\eta\prec_{\rm V} \tau(\tilde{t})\prec \tau(t)$. 

Let us denote $\tau(t=(c_1, 0))$ by $\nu$, which has the feature that $\nu_j = \eta_j$.  Define 
$$\gamma_i \equiv \begin{cases} 
\theta_i^{1-c_3} \nu_i^{c_3} & i<j,\\
\nu_i & i\geq j; 
\end{cases}
\quad \quad 
c_3= \frac{\log (\theta_j/ \nu_j) }{\sum_{i=1}^{j-1} \log (\nu_i /\theta_i)}.$$ 
Then $c_3\in (0, 1)$, and $\log\gamma\prec \log\theta$.  Moreover, since $\nu_i\geq \gamma_i$ for $i<j$ and $\eta\prec_{\rm V} \nu$, we have $\eta\prec_{\rm V} \gamma$. 

Also, denote $\tilde{\nu}\equiv \tau(t=(0, c_2))$, and define $\tilde{\gamma}_i = \theta_i,\ i\neq k$ and $\tilde{\gamma}_k = \eta_k$.  We have $\tilde{\nu}_k = \eta_k$, $\tilde{\gamma}\geq \tilde{\nu},\ \tilde{\gamma}\geq \theta,$ and $\eta\prec_{\rm V} \tilde{\gamma}$. 

In view of Lemma~\ref{lem.loc}, let $\epsilon>0$ be small enough so that as long as $0<t_i\leq \epsilon,\ i=1,2$, we have $F_{\tau(t_1, t_2)}$ crosses $F_\theta$ exactly once from below.  Construct a continuum of weight vectors $\rho(s)$ such that $\rho(-3) = \gamma,\ \rho(-2) = \nu,\ \rho(-1) = \tau(\epsilon, 0),\ \rho(1) = \tau(0, \epsilon),\ \rho(2) =\tilde{\nu},\ \rho(3)= \tilde{\gamma}$, and values of $\rho(s)$ in between are defined through linear interpolation.  In particular, for $s\in (-1, 1)$ we have $\rho(s) = \tau(t_1, t_2)$ with $t_1=(1-s)\epsilon/2$ and $t_2 = (1+s)\epsilon/2$.  By the choice of $\epsilon$, we know $F_{\rho(s)}$ crosses $F_\theta$ exactly once, from below, for $s\in (-1, 1)$.  The same holds for $s\in (-3, -1]\cup [1,3)$ by Theorem~\ref{thm1}, with the possible exception of some $s$ in the upper portion of the interval $(2, 3)$.  When $s\in (2, 3)$ it is possible that $F_{\rho(s)}$ stays entirely below $F_\theta$.  This is not possible for $s\in [-2, 2]$ because $\rho(s)\prec \theta$ and the means of the two distributions are equal.  It is not possible for $s\in (-3, -2)$ because the mean of $F_{\rho(s)}$ is even smaller than that of $F_\theta$.  To verify the conditions of Theorem~\ref{thm1}, we examine the subintervals of $(-3, -1]\cup [1,3)$ and note that $\rho_{(i)}(s) - \theta_i$ has only one sign change as a function of $i=1,\ldots, n$. 

Let $x_*$ denote a crossing point between $F_\eta$ and $F_\theta$ (the conditions ensure at least one crossing).  Note that $F_{\rho(s)}$  stochastically decreases as $s\downarrow -3$, with $F_{\rho(-3)}\leq_{\rm st} F_\theta$, and $F_{\rho(s)}$ stochastically increases as $s\uparrow 3$, with $F_{\rho(3)} \geq_{\rm st} F_\theta$.  Although monotonicity need not hold when $s$ moves away from the boundary, by continuity, $F_{\rho(s)}$ must cross $F_\theta$ at precisely $x_*$ for some $s_*\in (-3, 3)$. 

Suppose $s_*\in [2, 3)$.  Then it is easy to verify $\eta\prec_{\rm V} \rho(s_*)$.  Note that $\rho(s_*)\neq \eta$ but $F_\eta(x_*) = F_{\rho(s_*)}(x_*)$.  By  Proposition~\ref{prop.V}, we have $\prod_{i=1}^n (\eta_i/\rho_i(s_*))> 1$.  Since $\eta$ and $\rho(s_*)$ have $\eta_k$ in common, by the induction hypothesis, $F_\eta$ crosses $F_{\rho(s_*)}$ exactly once, from below, at the same crossing point $x_*$ between $F_{\rho(s_*)}$ and $F_\theta$.  As in the proof of Theorem~\ref{thm1}, we conclude that $x_*$ is the only crossing point between $F_\eta$ and $F_\theta$. 

The case of $s_*\in (-3, -2]$ is similar. 

Suppose $s_*\in (-2, 2)$.  Regardless of which subinterval $s_*$ falls into, we have some $t^*\equiv(t_1^*, t_2^*)$ not identically zero such that 
\begin{equation}
\label{t.star}
F_{\tau(t^*)}(x) < F_\theta(x),\quad x\in (0, x_*);\quad F_{\tau(t^*)}(x) > F_\theta(x), \quad x> x_*.
\end{equation}
 By Proposition~\ref{prop.V}, we must have $\prod_{i=1}^n \tau_i(t^*) \leq \prod_{i=1}^n \eta_i$, that is, $t^*\in \Omega$, with
$\Omega\equiv \{t=(t_1, t_2):\ t_i \in [0, c_i],\ i=1,2;\ \prod_{i=1}^n \eta_i \geq \prod_{i=1}^n \tau_i(t)\}.$  If $t^*$ lies in the interior of $\Omega$, then repeating the entire argument with $\tau(t^*)$ in place of $\theta$ (which corresponds to $t=(0, 0)$) we conclude that, either the claim does hold, or there exists $t_i^{**}\geq t_i^*$, with strict inequality for at least one $i=1,2$, such that $F_{\tau(t^{**})}$ crosses $F_{\tau(t^{*})}$ (and hence $F_\theta$) exactly once, from below, at $x_*$.  And $t^{**}\in \Omega$.  Let $\Omega_0$ be the set of $t\in \Omega$ such that (i) $t\geq t^*$, and (ii) $F_{\tau(t)}(x)\leq F_{\tau(t^*)}(x)$ for $x\in (0, x_*)$ and the inequality is reversed for $x> x_*$.  By continuity, $\Omega_0$ is a closed set.  Let $\omega^*$ be an element of $\Omega_0$ with maximal value of $\omega_1 + \omega_2$.  The above discussion shows that, either the claim holds, or $\omega^*$ does not belong to the interior of $\Omega$, that is, $ \omega^*_i=c_i$ for at least one $i=1,2$.  We can rule out the other boundary situation $\prod_{i=1}^n \eta_i =\prod_{i=1}^n \tau_i(\omega^*)$ in view of Proposition~\ref{prop.V}, unless $\tau(\omega^*)$ is a permutation of $\eta$, in which case the claim follows from the definition of $\Omega_0$ and the strict inequalities (\ref{t.star}).  In other cases, by the induction hypothesis, $F_\eta$ crosses $F_{\tau(\omega^*)}$ exactly once from below, at $x_*$; the claim follows from this, the definition of $\Omega_0$, and (\ref{t.star}). 
\end{proof}

To treat the local case of Lemma~\ref{lem.loc}, a key tool is the following Lemma~\ref{analytic}, which connects whether there are multiple crossing points when the weight vector is perturbed locally to whether mixtures of several gamma convolutions are always unimodal. 

\begin{lemma}
\label{analytic}
For a fixed positive weight vector $\theta$, let $\eta$ be defined by $\eta = \theta +\sum_{k=1}^K \tau^{(k)}$ where, associated with each $k$, we have a pair of indices $i_k\neq j_k$ and a real number $\delta_k>0$ such that $\theta_{i_k}< \theta_{j_k}$ and 
$$\tau^{(k)}_i = \begin{cases} \delta_k & i=i_k,\\
  -\delta_k & i=j_k,\\ 
 0 & {\rm otherwise}.
  \end{cases}
$$ 
Let $f_k(x|\delta),\ \delta\equiv (\delta_1, \ldots, \delta_K),$ denote the density of $\sum_{i=1}^n \eta_i X_i + \eta_{i_k} Z_{i_k} + \eta_{j_k} Z_{j_k}$, where $X_i\sim {\rm gamma}(\alpha, 1),\, Z_i\sim {\rm expo}(1)$ are mutually independent.  Suppose, for arbitrary constants $\lambda_k\geq 0$ such that $\sum_k \lambda_k =1$, we have $\sum_k \lambda_k f_k(x|0)$ is unimodal, with a strictly negative second derivative at the mode, and no saddle points.  Then for small enough $\sum_{k=1}^K \delta_k$, $F_\eta$ crosses $F_\theta$ exactly once, from below. 
\end{lemma}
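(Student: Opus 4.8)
The plan is to differentiate $F_\eta$ along the straight-line path from $\theta$ to $\eta$, apply Lemma~\ref{lem2} one transfer at a time, and thereby reduce everything to sign control of the slope and curvature of a mixture density. Concretely, I would set $\eta(t)=\theta+t\sum_{k=1}^{K}\tau^{(k)}$ for $t\in[0,1]$, so $\eta(0)=\theta$ and $\eta(1)=\eta$. Differentiating with respect to a single $\delta_k$ (holding the others fixed) changes only the two components $i_k,j_k$, so Lemma~\ref{lem2} applies with $G=\sum_{i\neq i_k,j_k}\eta_iX_i$ and gives $\partial_{\delta_k}F_\eta(x)=-\alpha(\eta_{j_k}-\eta_{i_k})\,\partial_x f_k(x\,|\,\delta)$, since the distribution function appearing in Lemma~\ref{lem2} is exactly the one whose density is $f_k(\cdot\,|\,\delta)$. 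The chain rule then yields
\begin{equation}
\frac{d}{dt}F_{\eta(t)}(x)=-\alpha\,\Lambda(t)\,\partial_x M(x,t),\qquad M(x,t)\equiv\sum_{k=1}^{K}\lambda_k(t)\,f_k(x\,|\,\delta(t)),
\end{equation}
where $\Lambda(t)=\sum_k\delta_k(\eta_{j_k}(t)-\eta_{i_k}(t))>0$ and $\lambda_k(t)=\delta_k(\eta_{j_k}(t)-\eta_{i_k}(t))/\Lambda(t)$ are nonnegative weights summing to one (each gap is positive because $\theta_{i_k}<\theta_{j_k}$ and the perturbation is small). Hence $M(\cdot,t)$ is a genuine mixture density. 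Integrating in $t$, and differentiating once more in $x$, produces the two identities
\begin{align}
F_\eta(x)-F_\theta(x)&=-\alpha\int_0^1\Lambda(t)\,\partial_x M(x,t)\,dt,\\
f_\eta(x)-f_\theta(x)&=-\alpha\int_0^1\Lambda(t)\,\partial_x^2 M(x,t)\,dt,
\end{align}
which reduce the claim to controlling the signs of $\partial_x M$ and $\partial_x^2 M$.

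Next I would identify the limiting mixture. As the total perturbation $\sum_k\delta_k\to 0$, uniformly in $t\in[0,1]$ one has $\lambda_k(t)\to\lambda_k^0\equiv\delta_k(\theta_{j_k}-\theta_{i_k})/\sum_l\delta_l(\theta_{j_l}-\theta_{i_l})$ and $f_k(\cdot\,|\,\delta(t))\to f_k(\cdot\,|\,0)$, so $M(\cdot,t)\to M_0\equiv\sum_k\lambda_k^0 f_k(\cdot\,|\,0)$, a single fixed convex combination of the $f_k(\cdot\,|\,0)$. By hypothesis $M_0$ is unimodal with unique mode $x^*$, $M_0''(x^*)<0$, and no saddle points; thus $M_0'>0$ on $(0,x^*)$, $M_0'<0$ on $(x^*,\infty)$, and $M_0'$ has a single transversal zero at $x^*$. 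I would transfer these properties to $M(\cdot,t)$ for small perturbations using the analyticity and uniform-on-compacts argument already employed in the proof of Theorem~\ref{thm2} (representing each density as $x^{\nu-1}E[e^{-x/S}S^{-\nu}]/\Gamma(\nu)$, whose $x$-derivatives converge uniformly on compact subsets of $(0,\infty)$). This gives, for all sufficiently small $\sum_k\delta_k$ and uniformly in $t$: (i) $\partial_x^2 M(x,t)<0$ on a core interval $I=[x^*-r,x^*+r]$; and (ii) $\partial_x M(x,t)>0$ on compact subsets of $(0,x^*-r]$ and $\partial_x M(x,t)<0$ on compact subsets of $[x^*+r,\infty)$.

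The two endpoint regions are handled by fixed asymptotics. Near $0$ every $f_k(\cdot\,|\,\delta)$ is the density of a sum with total shape $n\alpha+2>1$, so $f_k(x\,|\,\delta)\sim c\,x^{n\alpha+1}$ and $\partial_x f_k>0$, whence $\partial_x M(x,t)>0$ for $x$ near $0$; in the upper tail each $f_k(\cdot\,|\,\delta)$ is eventually decreasing at the rate set by the (nearly constant) largest weight, so $\partial_x M(x,t)<0$ for large $x$; both uniformly in $t$. Combining with (ii), for small enough $\sum_k\delta_k$ we get $\partial_x M(x,t)>0$ for all $x<x^*-r$ and $\partial_x M(x,t)<0$ for all $x>x^*+r$, uniformly in $t$. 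Feeding these signs into the displayed identities gives $F_\eta(x)-F_\theta(x)<0$ for $x<x^*-r$ and $>0$ for $x>x^*+r$, while (i) forces the integrand $-\alpha\Lambda(t)\partial_x^2 M(x,t)$ to be strictly positive on $I$, so $f_\eta-f_\theta>0$ throughout $I$. Thus $F_\eta-F_\theta$ is strictly negative to the left of $I$, strictly increasing across $I$, and strictly positive to the right of $I$; it therefore changes sign exactly once, from $-$ to $+$, i.e.\ $F_\eta$ crosses $F_\theta$ exactly once from below.

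The hard part will be steps (i)--(ii): making rigorous the transfer of the single transversal sign change of $M_0'$ to the perturbed, $t$-dependent family $M(\cdot,t)$. The hypotheses $M_0''(x^*)<0$ and ``no saddle points'' are used precisely here, since together they force $M_0'$ to vanish transversally at exactly one point, making the single sign change structurally stable rather than liable to split into several crossings under perturbation. The near-$0$ and tail regions require the separate asymptotic arguments above, because uniform-on-compacts convergence does not reach the boundaries of $(0,\infty)$; quantifying the uniformity in $t$ of these endpoint estimates is the chief technical burden.
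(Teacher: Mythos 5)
Your reduction to the path integral
\[
F_\eta(x)-F_\theta(x)=\int_0^1\sum_{k=1}^K\alpha(\theta_{i_k}-\theta_{j_k}+2t\delta_k)\,\delta_k\, f_k'(x\,|\,t\delta)\,{\rm d}t
\]
is exactly the paper's starting point (its equation (\ref{eqn.inte})), and your endpoint control matches the paper's likelihood-ratio sandwich ${\rm gamma}(n\alpha+2,m)\leq_{\rm lr}f_k(\cdot|t\delta)\leq_{\rm lr}{\rm gamma}(n\alpha+2,M)$, which confines all real crossings to a fixed compact interval. Where you genuinely diverge is the last step. The paper argues by contradiction: it takes a sequence $\delta(l)\to0$ along which there are at least two crossings, extracts a subsequence so that the normalized weights converge to some $\lambda$, shows $(F_{\eta(l)}-F_\theta)/(\alpha L(\delta(l)))\to\sum_k\lambda_kf_k'(\cdot|0)$ uniformly on compact subsets of the right half plane (using analyticity of the densities), and then counts zeros in a thin horizontal strip by Hurwitz's theorem: the limit has exactly one zero there (the mode, simple because the second derivative is strictly negative), so for large $l$ the difference $F_{\eta(l)}-F_\theta$ has exactly one root counting multiplicity, a contradiction. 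You instead stay on the real line: strict negativity of $\partial_x^2M$ on a core interval around the mode makes $F_\eta-F_\theta$ strictly increasing there, while the sign of $\partial_xM$ off that interval pins down the sign of $F_\eta-F_\theta$ outside it. This is more elementary (no argument principle) and the logic is sound; the price is that you need uniform two-sided bounds on first and second derivatives rather than merely locally uniform convergence of analytic functions.

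One point you must still address before this closes: the limiting weights $\lambda_k^0=\delta_k(\theta_{j_k}-\theta_{i_k})/\sum_l\delta_l(\theta_{j_l}-\theta_{i_l})$ are not a fixed target --- they depend on the direction of $\delta$ and range over the whole simplex as $\sum_k\delta_k\to0$. Consequently the mode $x^*$, the radius $r$ of your core interval, and your lower bounds on $|\partial_xM|$ and $|\partial_x^2M|$ all depend on $\lambda^0$, so the threshold on $\sum_k\delta_k$ you produce is a priori direction-dependent, whereas the lemma requires a single threshold valid for all $\delta$ with small sum. This is fixable by a compactness argument over the simplex (the hypothesis holds for every $\lambda$, $x^*(\lambda)$ varies continuously by the implicit function theorem since the second derivative at the mode is strictly negative, and the relevant quantities are jointly continuous), but it is not automatic and you neither supply it nor flag it --- you flag only uniformity in $t$. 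The paper's subsequence extraction is designed precisely to sidestep this uniformity question. Relatedly, your tail statement that each $f_k$ is ``eventually decreasing'' must be made quantitative and uniform in $\delta$ and $t$, which is what the paper's $\leq_{\rm lr}$ sandwich accomplishes.
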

\begin{proof}
Note that $\eta\prec\theta$ if $\delta$ is small enough, and hence $F_\eta$ crosses $F_\theta$ at least once, from below.  By Lemma~\ref{lem2} we have 
$$\frac{\partial{F_{\eta}(x)}}{\partial \delta_k}  = \alpha (\eta_{i_k} - \eta_{j_k})  f'_k(x| \delta),\quad k=1,\ldots, K.$$
Then, 
\begin{equation}
\label{eqn.inte}
F_{\eta}(x) - F_\theta(x) = \int_0^1 \sum_{k=1}^K \alpha (\theta_{i_k} - \theta_{j_k}+ 2t\delta_k) \delta_k  f'_k(x|t\delta)\, {\rm d} t.
\end{equation}
Suppose the claim does not hold, and there exists a sequence $\eta(l)$ corresponding to $\delta(l)\equiv (\delta_{1l}, \ldots, \delta_{Kl})$ such that $\delta(l) \to 0$ and $F_{\eta(l)}(x)-F_\theta(x)=0$ has at least two roots in $x\in (0,\infty)$, for each $l=1,2,\ldots$.  Denote $L(\delta) = \sum_{k=1}^K \delta_k (\theta_{i_k} - \theta_{j_k})$. 
 By taking subsequences if necessary as $l\to\infty$, we may assume $\delta_{kl}(\theta_{i_k} -\theta_{j_k})/L(\delta(l)) \to \lambda_k$ for some nonnegative $\lambda_k$ such that $\sum_{k=1}^K \lambda_k = 1$.  We benefit from the fact that $f_k$ can be regarded as analytic functions on the open right half of the complex plane, and as $\delta$ tends to zero they converge uniformly on compact subsets.  It follows that 
\begin{equation}
\label{eqn.cross}
\lim_{l\to\infty} \frac{F_{\eta(l)}(x) - F_\theta(x)}{ \alpha L(\delta(l))} = \sum_{k=1}^K \lambda_k f_k'(x|0),
\end{equation}
and the convergence is uniform on compact subsets.  Let $D$ denote a finite horizontal strip within the open right half plane such that $D\cap \mathbb{R}$ encloses all possible roots $x\in (0, \infty)$ of $F_{\eta(l)}(x) - F_\theta(x)=0$ for sufficiently large $l$.  This is possible from bounds on location of the crossing points (Bock et al. 1987; Roosta-Khorasani and Sz\'{e}kely, 2015).  In fact, when $\delta$ is sufficiently small and $t\in (0,1)$, letting $M=\max(\theta)$ and $m=\min(\theta)$ we obtain ${\rm gamma}(n\alpha+2, m)\leq_{\rm lr} f_k(\cdot|t\delta)\leq_{\rm lr} {\rm gamma}(n\alpha +2, M)$, which implies $f_k'(x|t\delta) <0$ for $x> M/(n\alpha +1)$ and $f_k'(x|t\delta)>0$ for $x<m/(n\alpha+1)$.  It follows from (\ref{eqn.inte}) 
 that all positive real roots of $F_{\eta}(x) - F_\theta(x)=0$ must be between $m/(n\alpha+1)$ and $M/(n\alpha+1)$.  We can make $D$ thin enough so that there are no other roots of $\sum_{k=1}^K \lambda_k f_k'(x|0)$ within $D$ except for the unique mode of the real function $\sum_{k=1}^K \lambda_k f_k(x|0)$ which, by assumption, must be a simple root.  By (\ref{eqn.cross}), for large enough $l$, the number of roots of $F_{\eta(l)} - F_\theta$ within $D$, counting multiplicity, must be equal to one, which contradicts the assumption of multiple real roots.
\end{proof}

Lemma~\ref{lem.loc} is a consequence of Lemma~\ref{uni.modal} and Lemma~\ref{analytic}. 
\begin{lemma}
\label{uni.modal}
In the setting of Lemma~\ref{lem.loc}, let $X_i\sim {\rm gamma}(\alpha)$ and $Z_i\sim {\rm expo}(1)$ be mutually independent.  Let $f_j$ denote the density of $\sum_{i=1}^n \theta_i X_i + \theta_j Z_j +\theta_k Z_k$ for $j=1,\ldots, k-1$ and that of
$\sum_{i=1}^n \theta_i X_i + \theta_j Z_j +\theta_{k+1} Z_{k+1}$ for $j=k+2, \ldots, n$. 
Then, for any $\lambda =(\lambda_1, \ldots, \lambda_{k-1}, \lambda_{k+2},\ldots, \lambda_n)$ such that $\lambda_i\geq 0$ and $\sum \lambda_i =1$, the mixture density $\sum_{i\neq k, k+1} \lambda_i f_i $ is unimodal with a strictly negative second derivative at the mode, and no saddle points. 
\end{lemma}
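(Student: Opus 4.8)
The plan is to reduce the statement to a single sharp inequality about a conditional variance and then to exploit log-concavity ($\alpha\geq 1$). First I would put all the densities $f_j$ on a common footing. For each $j$ the variable $G_j$ whose density is $f_j$ is a weighted sum of $n+2$ independent standard gamma/exponential variates whose shapes add up to $\nu\equiv n\alpha+2$, the same value for every $j$ (always $n$ shapes $\alpha$ and two shapes $1$). Writing $T$ for the unweighted total of these variates, so that $T\sim\mathrm{gamma}(\nu)$, and $S_j=G_j/T$ for the normalized weight, the classical independence of a gamma sum from its Dirichlet ratios gives $T\perp S_j$ and, exactly as in (\ref{complex1}),
$$
f_j(x)=\frac{x^{\nu-1}}{\Gamma(\nu)}\,E\!\left[e^{-x/S_j}S_j^{-\nu}\right].
$$
Mixing over $j$ with weights $\lambda_j$ produces a single scale $S$ (equal to $S_j$ with probability $\lambda_j$) and the representation $f(x)=\frac{x^{\nu-1}}{\Gamma(\nu)}E[e^{-x/S}S^{-\nu}]$ for $f\equiv\sum_{i\neq k,k+1}\lambda_i f_i$. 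Since $G=ST$ forces $T=G/S$, the conditional law of $S$ given $G=x$ is the tilted law proportional to $s^{-\nu}e^{-x/s}$, and $E[T\mid G=x]=x\,E[1/S\mid G=x]$.

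Next I would convert the three assertions (unimodal, strictly negative second derivative at the mode, no saddle points) into the single criterion that $f''(x)<0$ at every critical point; because $f(0^+)=f(\infty)=0$ and $f>0$, this forces a unique critical point, which is then the mode, with $f''<0$ there and hence no saddle points. From the representation, $f'(x)/f(x)=[(\nu-1)-E[T\mid G=x]]/x$, so the critical points are precisely the solutions of $E[T\mid G=x]=\nu-1$, and differentiating once more shows that at such a point $f''(x)<0$ is equivalent to
$$
\mathrm{Var}(T\mid G=x)<\nu-1=E[T\mid G=x].
$$
Thus the whole lemma reduces to this conditional-variance bound, equivalently to the strict monotonicity of $x\mapsto E[T\mid G=x]$, which would make it cross the level $\nu-1$ exactly once and from below.

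To attack the bound I would use the additive structure $(T,G)=(T_C,G_C)+(\Delta T,\Delta G)$, where $(T_C,G_C)=\bigl(\sum_i X_i,\sum_i\theta_i X_i\bigr)$ is a common gamma-convolution pair and $(\Delta T,\Delta G)$ is the mixed contribution of the two extra exponentials, independent of $(T_C,G_C)$. For $\alpha\geq 1$ each $X_i$ is log-concave, so the joint density of $(T_C,G_C)$ is strictly TP2, making $E[T_C\mid G_C=y]$ strictly increasing with a controlled growth rate; this is the quantitative engine behind the bound. The main obstacle is that one cannot simply invoke a preservation theorem across the mixture: the perturbation $(\Delta T,\Delta G)$ need not be positively regression dependent, because across mixture components $\Delta G$ attaches the same pair of exponentials to systematically different weights (small/medium weights $\theta_j,\theta_k$ for $j<k$, large/medium weights $\theta_j,\theta_{k+1}$ for $j>k+1$), so conditioning on a large $\Delta G$ can make $\Delta T$ smaller. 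The heart of the proof is therefore to show that the TP2 dependence of the log-concave pair $(T_C,G_C)$ dominates this adverse contribution and yields $\mathrm{Var}(T\mid G=x)<E[T\mid G=x]$; I expect this to be exactly where $\alpha\geq 1$ is indispensable, mirroring Lemma~\ref{lem3} and the failure for $\alpha<1$ recorded in Theorem~\ref{thm2}.
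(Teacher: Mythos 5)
Your reduction is correct and cleanly executed as far as it goes: the representation $f(x)=\frac{x^{\nu-1}}{\Gamma(\nu)}E[e^{-x/S}S^{-\nu}]$ with $T\perp S$ and $T\sim{\rm gamma}(\nu)$ does survive the mixing over $j$ (since each $T_j$ has the same gamma$(\nu)$ law and is independent of its $S_j$), the identity $f'(x)/f(x)=[(\nu-1)-E[T\mid G=x]]/x$ is right, and the equivalence of the three conclusions of the lemma with ``$f''<0$ at every critical point,'' hence with $\mathrm{Var}(T\mid G=x)<E[T\mid G=x]$ at every $x$ where $E[T\mid G=x]=\nu-1$, is a legitimate and rather elegant reformulation. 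The problem is that this reformulation is where your proof stops. The conditional-variance bound (equivalently, that $x\mapsto xE[1/S\mid G=x]$ increases through the level $\nu-1$) is not a routine consequence of anything you cite: you yourself observe that the mixture over $j$ destroys positive dependence between $\Delta T$ and $\Delta G$, and you then state that ``the heart of the proof is therefore to show'' that the TP2 dependence of $(T_C,G_C)$ dominates this adverse term --- without showing it. Two further difficulties are buried here: (i) even for the unperturbed log-concave pair $(T_C,G_C)$ alone, the claimed strict monotonicity of $E[T_C\mid G_C=y]$ ``with a controlled growth rate'' is asserted, not proved, and is not an off-the-shelf fact; (ii) conditional variances given a \emph{sum} $G=G_C+\Delta G$ do not decompose additively over the independent summands, so there is no obvious mechanism by which a bound for $(T_C,G_C)$ plus a bound for $(\Delta T,\Delta G)$ would combine into the bound for $(T,G)$. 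The entire analytic content of Lemma~\ref{uni.modal} lives in this unproved inequality.

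For comparison, the paper does not go through conditional moments at all. It isolates the mixed part as a two-point mixture of $W_1=\tilde{X}+\theta_kZ_k+Y_1$ and $W_2=\tilde{X}+\theta_{k+1}Z_{k+1}+Y_2$ (the common factor $\sum_{i\neq k+1,n}\theta_iX_i$ being log-concave, so that convolving it in preserves unimodality of mixtures), and then uses the supplemented likelihood ratio order: Lemma~\ref{lem.initial} and Proposition~\ref{slr.prop} sandwich $W_1$ and $W_2$ between explicitly tractable densities, Proposition~\ref{comp.prop} transfers unimodality of mixtures from the outer pair to the inner pair, and the outer pair is handled by the explicit log-concavity computations of Lemmas~\ref{lem.lc1} and~\ref{lem.lc2}. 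That machinery is precisely the substitute for the domination argument you left open; until you supply a proof of $\mathrm{Var}(T\mid G=x)<E[T\mid G=x]$ at the critical points (and explain where $\alpha\geq 1$ enters it), your proposal is a promising reduction rather than a proof.
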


Lemma~\ref{uni.modal} requires detailed analysis.  As a starting point, we prove some monotonicity properties concerning the densities of gamma convolutions in a simple case.  

\begin{lemma}
\label{lem.initial}
For $\theta\in (0, 1)$, let $X\sim {\rm gamma}(\alpha, 1)$ and $Z\sim {\rm expo}(1)$ independently.  Denote the density of $X + \theta Z$ by $h(x)$.  Then (a) if $\alpha \geq 1$ then $h'(x)/g_\alpha'(x)$ strictly increases in $x\in (\alpha -1,\, \infty)$; (b) if $\alpha \geq 2$ then $h'(x)/g'_\alpha(x)$ also strictly increases in $x\in (0, \alpha -1)$;  
 (c) if $\alpha \geq 1$ then $h'(x)/g_{\alpha +1}'(x)$ strictly decreases in each of $x\in (0, \alpha)$ and $x\in (\alpha, \infty)$; (d) parts (a)--(c) still hold when the distribution of $Z$ is replaced by a mixture of exponentials with rates $> 1$.
\end{lemma}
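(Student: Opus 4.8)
The plan is to reduce each monotonicity claim to the sign of a Wronskian and then to a single scalar inequality. Write $\beta=1/\theta>1$ and $c=\beta-1>0$, so that $\theta Z$ has density $p(w)=\beta e^{-\beta w}$ and $h=g_\alpha * p$. Differentiating the convolution gives the first-order relation $h'(x)=\beta\bigl(g_\alpha(x)-h(x)\bigr)$, hence $h''(x)=\beta\bigl(g_\alpha'(x)-h'(x)\bigr)$, and an integration by parts (legitimate for $\alpha>1$, where $g_\alpha(0)=0$) yields the closed form
$$h'(x)=\beta e^{-\beta x}\int_0^x g_\alpha'(u)e^{\beta u}\,du;$$
the case $\alpha=1$ is elementary, since then $h$ is a sum of two exponentials and $h'(x)/g_1'(x)=\tfrac{\beta}{\beta-1}\bigl(1-\beta e^{-(\beta-1)x}\bigr)$ is visibly increasing. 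For parts (a)--(b) set $\psi=h'/g_\alpha'$. Wherever $g_\alpha'\neq0$ we have $\psi'=(h''g_\alpha'-h'g_\alpha'')/(g_\alpha')^2$, so $\psi$ increases exactly when the Wronskian $W=h''g_\alpha'-h'g_\alpha''$ is positive. Substituting the two displayed relations and writing $P(u)\equiv g_\alpha'(u)e^{\beta u}=\Gamma(\alpha)^{-1}u^{\alpha-2}(\alpha-1-u)e^{cu}$, I would show $\operatorname{sign}W=\operatorname{sign}\widetilde J$, where $\widetilde J(x)=P(x)^2-P'(x)\int_0^x P(u)\,du$.

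Next, for $\alpha>2$ (so that $P(0)=0$) I would integrate by parts once more to get $\widetilde J(x)=\int_0^x\!\bigl(P(x)P'(u)-P'(x)P(u)\bigr)\,du$, and compute
$$P(x)P'(u)-P'(x)P(u)=\frac{x^{\alpha-3}u^{\alpha-3}e^{c(x+u)}}{\Gamma(\alpha)^2}\,(x-u)\,\Psi(u,x),\qquad \Psi(u,x)=(\alpha-2)(\alpha-1-x)(\alpha-1-u)+ux.$$
On $(0,\alpha-1)$ every factor of $\Psi$ is positive, so $\widetilde J>0$ and part (b) follows at once. On $(\alpha-1,\infty)$ the factor $\Psi$ is linear and increasing in $u$, negative near $u=0$ and positive near $u=x$, with the single root $u_0=(\alpha-2)\frac{x-\alpha+1}{x-\alpha+2}$; writing $\Psi=(\text{positive})\cdot(u-u_0)$ reduces $\widetilde J>0$ to the weighted-mean inequality $\bar u_\rho>u_0$ for the density $\rho(u)\propto u^{\alpha-3}(x-u)e^{cu}$ on $(0,x)$. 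Since the exponential tilt $e^{cu}$ only raises the mean, $\bar u_\rho$ exceeds the $\mathrm{Beta}(\alpha-2,2)$ mean $x(\alpha-2)/\alpha$, and the clean inequality $x(\alpha-2)/\alpha\ge u_0$ is equivalent to $x^2-2(\alpha-1)x+\alpha^2-\alpha\ge0$, a quadratic whose discriminant $4(1-\alpha)$ is negative for $\alpha>1$. Thus $\widetilde J>0$ throughout, proving (a) for $\alpha>2$.

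Part (c) I would treat by the same scheme with the denominator $g_{\alpha+1}'$: setting $Q(u)=g_{\alpha+1}'(u)e^{\beta u}=\Gamma(\alpha+1)^{-1}u^{\alpha-1}(\alpha-u)e^{cu}$, the sign of the relevant Wronskian equals that of $P(x)Q(x)-Q'(x)\int_0^x P$, which for $\alpha>2$ becomes $\int_0^x\!\bigl(Q(x)P'(u)-Q'(x)P(u)\bigr)\,du$; the integrand again factors as a positive prefactor times $(x-u)$ times a function of $u$ with a single sign change, and an analogous tilted-mean comparison (with the mode of $g_{\alpha+1}$ now at $\alpha$) delivers the required negativity on each of $(0,\alpha)$ and $(\alpha,\infty)$. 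Finally, part (d) is immediate: if $Z$ has a density that is a nonnegative mixture $\int p_\beta\,d\mu(\beta)$ of exponentials with rates $\beta>1$, then $h=\int h_\beta\,d\mu(\beta)$ and hence $h'/g_\alpha'=\int(h_\beta'/g_\alpha')\,d\mu(\beta)$ is a nonnegative average of functions each monotone by (a)--(c), so the monotonicity is inherited.

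The main obstacle is the range $1\le\alpha<2$ in parts (a) and (c). There $P(0^+)=+\infty$, so the second integration by parts and the $\mathrm{Beta}(\alpha-2,2)$ interpretation both break down: the factorized integrand $\propto u^{\alpha-3}\Psi$ is no longer integrable at the origin, and the boundary term $P(x)P(\epsilon)$ produced by integrating over $(\epsilon,x)$ diverges as $\epsilon\downarrow0$. My plan here is to keep the exact identity $\widetilde J(x)=P(x)^2-P'(x)\int_0^x P$ (which is finite for every $\alpha>1$) and sign it directly, either by a regularized integration by parts in which the two divergences are shown to cancel and leave a positive remainder, or by anchoring at the explicit $\alpha=1$ computation and propagating the (finite, nonvanishing) sign of $W$ across $1<\alpha<2$ by continuity in $\alpha$. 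Making this cancellation/continuation argument rigorous and uniform is the crux of the proof.
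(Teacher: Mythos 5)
Your reduction to the sign of the Wronskian $W=h''g_\alpha'-h'g_\alpha''$ via the identity $\theta h'+h=g_\alpha$ is the same starting point as the paper, and your factorization $\phi(x)\phi'(u)-\phi'(x)\phi(u)=x^{\alpha-3}u^{\alpha-3}(x-u)\Psi(u,x)$ (with $\phi(u)=u^{\alpha-2}(\alpha-1-u)$), followed by the tilted-Beta mean comparison and the negative-discriminant quadratic, is correct and gives a genuinely different, rather elegant proof of (a) and (b) --- but only where it applies, namely $\alpha>2$ (at $\alpha=2$ the boundary term $P(x)P(0)$ survives the integration by parts, though it is harmless on $(0,\alpha-1)$). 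The decisive problem is the one you flag yourself: for $1<\alpha<2$ it is not just the integration by parts that fails but the whole signing mechanism, since $u^{\alpha-3}(x-u)$ is not integrable at the origin and there is no probability measure against which to run the mean comparison; and the fallback of ``propagating the sign by continuity in $\alpha$'' is circular, because continuity transports a sign across $1<\alpha<2$ only if you already know $W$ never vanishes there, which is the statement to be proved. Since Lemma~\ref{uni.modal} invokes Lemma~\ref{lem.initial} for all $\alpha\ge1$, this is a genuine gap rather than a deferred technicality. The paper closes it by a different device that works uniformly for all $\alpha>1$: it sets $u(x)=e^{x/\theta}\bigl[g'^2(x)/(\theta g''(x)+g'(x))-h'(x)\bigr]$, gets $u(\alpha-1)<0$ from $g'(\alpha-1)=0$ and $h\geq_{\rm lr}g$, and shows $u'(x)<0$ on $(\alpha-1,\infty)$ using $g''^2-g'g'''>0$ (log-concavity of $-g'$ there), handling $\alpha=1$ by direct computation.

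A second, unacknowledged error is in your sketch of (c): the mixed expression $Q(x)P'(u)-Q'(x)P(u)$ reduces (after the $c$-terms cancel) to $\chi(x)\phi'(u)-\chi'(x)\phi(u)$ with $\chi(u)=u^{\alpha-1}(\alpha-u)\neq\phi(u)$, which is \emph{not} antisymmetric in $(x,u)$, does not vanish at $u=x$ (at $\alpha=3$, $u=x=1$ the bracket equals $-3$ times a positive prefactor), and therefore does not factor as $(x-u)$ times a single-sign-change function of $u$. The bracket is a genuine quadratic in $u$ and the ``analogous tilted-mean comparison'' has no foothold, so (c) is unproven even for large $\alpha$; the paper instead repeats its auxiliary-function argument with $\tilde{u}(x)= e^{x/\theta}\bigl[g'(x)g'_{\alpha+1}(x)/(\theta g''_{\alpha+1}(x)+g'_{\alpha+1}(x))-h'(x)\bigr]$ and the root $x^*$ of $\theta g''_{\alpha+1}+g'_{\alpha+1}=0$. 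Your part (d), averaging $h_\beta'/g_\alpha'$ over the mixing measure, is fine and matches the paper's ``obvious.''
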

\begin{proof}
In the $\alpha=1$ case the densities are amenable to direct calculations.  Let us assume $\alpha>1$.  Denote $g\equiv g_\alpha$.  We have 
\begin{equation}
\label{eqn.lap}
\theta h'(x) + h(x) = g(x),\quad x>0,
\end{equation}
which can be verified by comparing the Laplace transform of both sides.  To prove (a), we will show $h''(x) g'(x) > h'(x) g''(x) $ for $x> \alpha -1$.  Differentiating (\ref{eqn.lap}) to eliminate $h''(x)$, and noting that $\theta g''(x) +g'(x)<0$ for $x>\alpha -1$, we equivalently need to show 
\begin{equation}
\label{eqn.u1}
u(x)\equiv e^{x/\theta}  \left[\frac{g'^2(x)}{\theta g''(x) +g'(x)}-h'(x)\right] <0,\quad x>\alpha-1.
\end{equation}
This holds for $x=\alpha -1$ because $g'(\alpha -1)=0$ and $h$ dominates $g$ in the likelihood ratio order.  By direct calculation, we have 
\begin{equation}
\label{eqn.u2}
u'(x) = \frac{\theta e^{x/\theta} g'(x)(g''^2(x) - g'(x) g'''(x))}{(\theta g''(x) +g'(x))^2} <0,\quad x>\alpha -1.
\end{equation}
Thus $u(x)<0$ for all $x>\alpha -1$, as required.

To prove (b) we only need to show that $u(x)>0$ for $x\in (0, x_*)$ and $u(x)<0$ for $x\in (x_*, \alpha -1)$ where $x_*<\alpha -1 $ is the unique positive root of $\theta g''(x) +g'(x)=0$.  When $\alpha \geq 2$ we have $u(x)\to 0$ as $x\downarrow 0$ and $u'(x)>0$ for $x\in (0, x_*)\cup (x_*, \alpha -1)$. We obtain the desired sign pattern of $u$ as a consequence.

To prove (c) we similarly will show $h''(x)g'_{\alpha+1}(x) < h'(x) g''_{\alpha+1}(x)$ for $x>0$.  This is equivalent to $\tilde{u}(x)<0$ for $x\in (0, x^*)$ and $\tilde{u}(x)>0$ for $x\in (x^*, \infty)$, where $x^*\in (\alpha -1, \alpha)$ is the unique positive root of $\theta g''_{\alpha+1}(x) +g'_{\alpha+1}(x)=0$ and 
$$\tilde{u}(x)\equiv e^{x/\theta}  \left[\frac{g'(x)g'_{\alpha+1}(x)}{\theta g''_{\alpha+1}(x) +g'_{\alpha+1}(x)}-h'(x)\right].$$
This sign pattern can be proved by arguments parallel to the previous parts.

Part (d) is obvious.
 \end{proof}

The usefulness of these monotonicity properties is more apparent after we define the following stochastic order.

\begin{definition}
Suppose $f$ and $g$ are twice continuously differentiable densities supported on an interval $I\subset (0, \infty)$.  We say $f$ is dominated by $g$ in the {\it supplemented likelihood ratio} ordering, written as $f\leq_{\rm slr} g$, if (a) $f'(x)g(x)\leq f(x)g'(x)$ for all $x\in I$ and (b) $f'(x)/ g'(x)$ decreases in each of the sets $\{x:\ f'(x) >0\}$ and $\{x:\ g'(x) <0\}$.  
\end{definition}

Some properties of $\leq_{\rm slr}$ are summarized as follows.
\begin{proposition}
\label{slr.simple}
(a) If $f\leq_{\rm slr}g $ and $g\leq_{\rm slr} h$ then $f\leq_{\rm slr} h$.  (b) If $\alpha \geq 2$ 
and $\theta\in (0, 1)$, then $g_\alpha \leq_{\rm slr} g_\alpha * {\rm expo}(\theta)$ where $*$ denotes convolution. (c) Suppose $\alpha\geq 1$, and $\theta\in (0, 1)$, then $g_\alpha * {\rm expo}(\theta)\leq_{\rm slr} g_{\alpha+1}$. (d) Parts (b) and (c)  still hold if ${\rm expo}(\theta)$ is replaced by a mixture of exponentials with rates $\geq 1$. 
\end{proposition}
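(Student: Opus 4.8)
The plan is to treat parts (b), (c), (d) as translations of Lemma~\ref{lem.initial} into the language of $\leq_{\rm slr}$, reserving the genuinely new work for the transitivity claim (a). Throughout I identify the density $h$ of $X+\theta Z$ in Lemma~\ref{lem.initial} with $g_\alpha * {\rm expo}(\theta)$ (so ${\rm expo}(\theta)$ denotes mean $\theta$, i.e.\ rate $1/\theta>1$), and I record the modes: since $g_\alpha$ has mode $\alpha-1$ and $g_{\alpha+1}$ has mode $\alpha$, and $h$ is log-concave (a convolution of the log-concave densities $g_\alpha$ and an exponential, valid for $\alpha\geq1$), $h$ is unimodal with a mode $m_h$ that I will locate in $(\alpha-1,\alpha)$.

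For the likelihood-ratio condition (a) of $\leq_{\rm slr}$ in each of (b) and (c): log-concavity of $h$ gives, via $\theta h'(x)+h(x)=g_\alpha(x)$ from (\ref{eqn.lap}), that $g_\alpha/h=\theta(\log h)'+1$ is decreasing, so $g_\alpha\leq_{\rm lr} h$; and since $g_\alpha$ is PF$_2$ and ${\rm expo}(\theta)\leq_{\rm lr}{\rm expo}(1)$ for $\theta<1$, convolution preserves the order and $h=g_\alpha*{\rm expo}(\theta)\leq_{\rm lr} g_\alpha*{\rm expo}(1)=g_{\alpha+1}$. These place $m_h\in(\alpha-1,\alpha)$. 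The second condition of $\leq_{\rm slr}$ is then read off from Lemma~\ref{lem.initial}. For (b), with $f=g_\alpha,\ g=h$, the set $\{f'>0\}=(0,\alpha-1)$ and the set $\{h'<0\}=(m_h,\infty)\subseteq(\alpha-1,\infty)$; on these sets Lemma~\ref{lem.initial}(b) (needing $\alpha\geq2$) and (a) (needing $\alpha\geq1$) say $h'/g_\alpha'$ increases, and since $h'/g_\alpha'$ is positive there (both derivatives share a sign), the reciprocal $g_\alpha'/h'$ decreases, as required. For (c), with $f=h,\ g=g_{\alpha+1}$, the sets $\{h'>0\}=(0,m_h)\subseteq(0,\alpha)$ and $\{g_{\alpha+1}'<0\}=(\alpha,\infty)$ are exactly the intervals of Lemma~\ref{lem.initial}(c), which gives $h'/g_{\alpha+1}'$ decreasing directly. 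Part (d) follows the same template from Lemma~\ref{lem.initial}(d): for a mixture $M$ of exponentials with rates $\geq1$ one checks $M\leq_{\rm lr}{\rm expo}(1)$ from the explicit ratio $M(x)/e^{-x}=\sum_j p_j r_j e^{-(r_j-1)x}$, which is decreasing, yielding the (c) direction; and $g_\alpha\leq_{\rm lr} g_\alpha*M$ because $(g_\alpha*M)/g_\alpha$ is a convex combination of the increasing ratios $(g_\alpha*{\rm expo}(r_j))/g_\alpha$, yielding the (b) direction. The boundary rate $1$ is absorbed by continuity, the $\leq_{\rm slr}$ conditions being stated with weak monotonicity.

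The substantive step is the transitivity (a), which I would prove for unimodal densities (the only case arising here, all being log-concave gamma convolutions). Condition (a) is immediate, as $\leq_{\rm lr}$ is transitive. For condition (b), note first that $f\leq_{\rm lr}g\leq_{\rm lr}h$ forces the modes to satisfy $m_f\leq m_g\leq m_h$ (at a mode the derivative of the larger density is nonnegative). Hence $\{f'>0\}=(0,m_f)\subseteq(0,m_g)$ and $\{h'<0\}=(m_h,\infty)\subseteq(m_g,\infty)$. On $(0,m_f)$ all of $f',g',h'$ are positive, so $f'/h'=(f'/g')(g'/h')$ is a product of two positive functions, each decreasing by hypothesis ($f'/g'$ on $\{f'>0\}$, and $g'/h'$ on $(0,m_g)=\{g'>0\}$); a product of positive decreasing functions is decreasing. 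On $(m_h,\infty)$ all three derivatives are negative, so again $f'/h'=(f'/g')(g'/h')$ is a product of two positive decreasing functions ($f'/g'$ on $\{g'<0\}$, and $g'/h'$ on $\{h'<0\}$), hence decreasing. This establishes $f\leq_{\rm slr}h$.

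I expect the main obstacle to be not any single computation but the careful matching of sign-domains: the reciprocal-flips in (b)/(c) and the factorization in (a) are valid only because each ratio keeps a constant sign on the relevant interval, and this constancy is exactly what the mode ordering supplies. Thus the crucial structural input is unimodality together with the lr-induced ordering of modes; once those are in hand, Lemma~\ref{lem.initial} and the product rule do the rest. A secondary point to handle with care is the verification of the likelihood-ratio halves of (b)--(d) and the inclusion of rate $1$ at the boundary in (d).
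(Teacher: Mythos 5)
Your proof is correct and follows the same route as the paper, whose entire proof reads ``Part (a) is obvious.\ Parts (b)--(d) are restating Lemma~\ref{lem.initial}.'' You have simply supplied the details the paper leaves implicit: the mode-ordering/sign-matching needed to convert the monotonicity statements of Lemma~\ref{lem.initial} into the two conditions defining $\leq_{\rm slr}$, and the factorization $f'/h'=(f'/g')(g'/h')$ on the relevant sign-constant intervals for transitivity.
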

\begin{proof}
Part (a) is obvious.  Parts (b)--(d) are restating Lemma~\ref{lem.initial}.  
\end{proof}
\begin{proposition}
\label{slr.prop}
Suppose $f$ and $g$ are unimodal (see Remark 3), $f\leq_{\rm slr} g$ and $h$ is Polya frequency order 3.  Assume we can take the derivatives inside the absolutely convergent integrals and obtain $(f* h)' = f' * h$ and $(g* h)' = g' * h$. Then $f*h \leq_{\rm slr} g*h$.
\end{proposition}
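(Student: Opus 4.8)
The plan is to verify the two defining conditions of $\leq_{\rm slr}$ for the pair $(f*h,\,g*h)$ separately, translating each required monotonicity into a statement about the number and the arrangement of sign changes, so that the variation-diminishing property of the PF$_3$ kernel $h$ can be brought to bear. Condition (a), namely $f*h\leq_{\rm lr} g*h$, is the classical fact that the likelihood ratio order is preserved under convolution with a PF$_2$ (log-concave) density: writing the $2\times\infty$ kernel with rows $f,g$, the hypothesis $f\leq_{\rm lr} g$ says this kernel is TP$_2$ in $(\mathrm{row},x)$, and since $h(x-u)$ is TP$_2$ in $(u,x)$, the basic composition formula makes the kernel with rows $f*h,\,g*h$ TP$_2$ as well. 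Because $f\leq_{\rm lr}g$ forces the modes $m_f\leq m_g$, the same holds after convolution, so the modes $m'$ of $f*h$ and $m''$ of $g*h$ satisfy $m'\leq m''$; this ordering is used repeatedly below. The substance of the proof is condition (b), which the remainder of the plan addresses.

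First I would record the structural facts. Since $f,g$ are unimodal and $h$ is in particular log-concave, $f*h$ and $g*h$ are unimodal, so $\phi:=(f*h)'=f'*h$ and $\psi:=(g*h)'=g'*h$ each have exactly one sign change, from $+$ to $-$, at $m'$ and $m''$ respectively (these first-derivative identities are exactly what the hypothesis grants, and no second derivatives will be needed). I would then recast condition (b) for the convolutions in terms of $\rho_c := \phi - c\psi = (f'-cg')*h$, $c>0$: that $\phi/\psi$ decreases on $\{\phi>0\}=(\ell,m')$, where $\psi>0$ as well since $m'\leq m''$, is equivalent to $\rho_c$ having no $-\to+$ crossing on $(\ell,m')$ for every $c>0$; and that $\phi/\psi$ decreases on $\{\psi<0\}=(m'',r)$, where $\phi<0$, is equivalent to $\rho_c$ having no $+\to-$ crossing on $(m'',r)$ for every $c>0$. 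Crucially, on the middle interval $(m',m'')$ one has $\phi<0<\psi$, so $\rho_c<0$ there for free.

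The heart of the matter is a sign-count on the \emph{pre-convolution} function $\sigma_c := f'-cg'$. Using the slr hypotheses on $f,g$ together with unimodality and $m_f\leq m_g$, I would show that for every $c>0$ the function $\sigma_c$ has at most two sign changes and that the \emph{only} two-change pattern possible is $(+,-,+)$. This is a region analysis: on $(\ell,m_f)$ both $f',g'>0$ and $f'/g'$ decreases to $0$ (condition (b), left), so $\sigma_c$ is $+$ then $-$, or identically $-$; on $(m_f,m_g)$ one has $f'<0<g'$, so $\sigma_c<0$; and on $(m_g,r)$ both derivatives are negative and $f'/g'$ decreases (condition (b), right), so $\sigma_c$ is $-$ then $+$, or identically $-$. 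Concatenating, the admissible patterns are $(-),(+,-),(-,+),(+,-,+)$, which gives the claim. I then invoke the variation-diminishing theorem for PF$_3$ kernels (Karlin 1968): since $S^-(\sigma_c)\leq 2$, the convolution $\rho_c=h*\sigma_c$ satisfies $S^-(\rho_c)\leq S^-(\sigma_c)\leq 2$, and if $S^-(\rho_c)=2$ then $\rho_c$ has the \emph{same ordered sign sequence} as $\sigma_c$, namely $(+,-,+)$.

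Finally I would close condition (b) by contradiction. Suppose $\rho_c$ had a $-\to+$ crossing at some $p\in(\ell,m')$. Since $\rho_c(m')=-c\psi(m')<0$, a subsequent $+\to-$ crossing must occur before $m'$, and then $\rho_c<0$ throughout $(m',m'')$; inspecting the sign near the left endpoint, either a third sign change is forced (contradicting $S^-(\rho_c)\leq 2$) or $\rho_c$ realizes the two-change pattern $(-,+,-)$, which contradicts the sign-preservation clause, the only admissible two-change pattern being $(+,-,+)$. The argument against a $+\to-$ crossing on $(m'',r)$ is symmetric, using $\rho_c<0$ just to the right of $m''$ and on $(m',m'')$. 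Hence no forbidden crossing occurs, establishing condition (b) and thus $f*h\leq_{\rm slr} g*h$. I expect the main obstacle to be precisely this last step: the bare count $S^-(\rho_c)\leq 2$ does not suffice, and one must combine the sign-\emph{arrangement} part of the variation-diminishing theorem with the forced negativity of $\rho_c$ on $(m',m'')$ to exclude the spurious pattern $(-,+,-)$; it is here that the full strength of the PF$_3$ hypothesis (rather than mere log-concavity) is consumed.
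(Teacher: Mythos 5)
Your proposal is correct and follows essentially the same route as the paper: reduce condition (b) to a sign-change count for $f'-\lambda g'$, show via the slr and lr hypotheses that the only admissible two-change arrangement is $(+,-,+)$, and push this through the PF$_3$ kernel using variation diminution together with preservation of the sign arrangement. The only difference is in the final exclusion step, where you use $\rho_\lambda(m')=-\lambda (g*h)'(m')<0$ to force a forbidden arrangement, while the paper splits on the size of $\lambda$ relative to $(f*h)(x_0)/(g*h)(x_0)$ and integrates $(f*h)'$ over $(x_0,\infty)$; both close the argument, and yours is arguably slightly cleaner.
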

\begin{proof}
Let us denote $\tilde{f} = f*h$ and $\tilde{g} = g*h$.  Since $h$ is PF3, the likelihood ratio ordering is preserved, that is, $\tilde{f}\leq_{\rm lr} \tilde{g}$.  Moreover, $\tilde{f}$ and $\tilde{g}$ are unimodal.  Let $\lambda >0$ and consider the function $f'(x)-\lambda g'(x)$.  By assumption $f'(x)/g'(x)$ decreases on each of $I_+ \equiv \{x:\ f'(x)>0\}$ and $I_-\equiv \{x:\ g'(x)<0\}$.  Assume these are non-empty, otherwise the argument can be suitably modified.  Note that by $f\leq_{\rm lr} g$ we have $g'(x)>0$ for $x\in I_+$ and $f'(x)<0$ for $x\in I_-$.  On the set $I_0\equiv \{x:\ g'(x)\geq 0,\ f'(x)\leq 0\}$ we have $f'(x) -\lambda g'(x)\leq 0$.  Overall $f'-\lambda g'$ changes signs at most twice, and the sign sequence is $+, -, +$ in the case of two changes.  By the variation-diminishing properties of totally positive kernels (Karlin 1968), the same is true for $(f' -\lambda g')* h = \tilde{f}' -\lambda \tilde{g}'$.  We need to show $\tilde{f}'/\tilde{g}'$ decreases on each of $\tilde{I}_+\equiv \{x:\ \tilde{f}'(x)>0\}$ and $\tilde{I}_-\equiv \{x:\ \tilde{g}'(x)<0\}$.  Denote the upper end point of $\tilde{I}_+$ by $x_0$.  For $0<\lambda < \tilde{f}(x_0)/\tilde{g}(x_0)$, if $\tilde{f}'(x)-\lambda \tilde{g}'(x)$ ever crosses zero from below in $x\in I_+$, then it must be nonnegative for $x\geq x_0$, in order not to violate the sign pattern of $+, -, +$.
Thus 
$$\tilde{f}(x_0) = -\int_{x_0}^\infty \tilde{f}'(x)\, {\rm d}x \leq -\lambda \int_{x_0}^\infty \tilde{g}'(x)\, {\rm d}x = \lambda \tilde{g}(x_0)$$
which contradicts $\lambda < \tilde{f}(x_0)/\tilde{g}(x_0)$.  With a small perturbation this still applies when $\lambda = \tilde{f}(x_0)/\tilde{g}(x_0)$.  For $\lambda > \tilde{f}(x_0)/\tilde{g}(x_0)$, we have $\tilde{f}'(x)/ \tilde{g}'(x) < \lambda $ for $x\in I_+$ sufficiently close to $x_0$, because of the likelihood ratio ordering.  In order not to violate the sign pattern, $\tilde{f}'(x)-\lambda \tilde{g}'(x)$ cannot cross zero from below in $x\in I_+$ in this case either.  Because $\lambda$ is arbitrary, $\tilde{f}'(x)/\tilde{g}'(x)$ must decrease for $x\in I_+$.  The case of $x\in \tilde{I}_-$ is similar. 
\end{proof}

{\bf Remark 3.} We impose a restricted form of unimodality, which is satisfied by the gamma convolutions.  For the above proof to be valid, we need the set $I_0$ to be situated between $I_+$ and $I_-$.  This will be satisfied if we assume the closures of $I_+$ and $I_-$ are intervals.  So, an isolated saddle point is allowed, but not a flat ridge.  We will note down such restrictions when needed.

Proposition~\ref{comp.prop} reveals the intimate relation between $\leq_{\rm slr}$ and the unimodality of the mixture of two densities with arbitrary mixing proportions.  It allows us to reduce the problem of unimodality needed in Lemma~\ref{uni.modal} to manageable special cases.

\begin{proposition}
\label{comp.prop}
Let $f_i$ and $h_i,\ i=1,2,$ be twice continuously differentiable and unimodal densities supported on $(0, \infty)$ such that 
$$h_1\leq_{\rm slr} f_1\leq_{\rm lr} f_2\leq_{\rm slr} h_2.$$  
Suppose the mixture density $p h_1 + (1- p) h_2$ is unimodal for all $p \in [0,1]$.  Then so is $p f_1 + (1-p) f_2$, assuming $f'_1$ and $f_2'$ do not vanish simultaneously in between the modes of $f_1$ and $f_2$. 
\end{proposition}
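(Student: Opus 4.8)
The plan is to reduce the unimodality of the mixture $m_p := pf_1 + (1-p)f_2$ to the monotonicity of a single ratio on the interval between the two modes, and then to obtain that monotonicity from a telescoping factorization that inserts $h_1$ and $h_2$ exactly where the hypotheses apply. Write $a$ for the mode of $f_1$ and $b$ for the mode of $f_2$. Since $f_1 \leq_{\rm lr} f_2$ we have $a \leq b$, and the full chain $h_1 \leq_{\rm slr} f_1 \leq_{\rm lr} f_2 \leq_{\rm slr} h_2$ gives $\mathrm{mode}(h_1) \leq a \leq b \leq \mathrm{mode}(h_2)$. On $(0,a)$ both $f_1'$ and $f_2'$ are positive and on $(b,\infty)$ both are negative, so $m_p' = pf_1' + (1-p)f_2'$ has constant sign ($+$, respectively $-$) outside $(a,b)$; hence every sign change of $m_p'$ occurs in $(a,b)$, where $f_1' < 0 < f_2'$. (Isolated saddle points of $f_1$ beyond $b$, or of $f_2$ below $a$, are harmless, since there the other derivative pins down the sign of $m_p'$.) On $(a,b)$ we have $m_p'(x) > 0 \iff R(x) < (1-p)/p$ where $R(x) := -f_1'(x)/f_2'(x) > 0$, so it suffices to prove $R$ is strictly increasing from $R(a^+)=0$ to $R(b^-)=+\infty$: then $R$ meets each positive level once and from below, which is exactly unimodality of $m_p$ for every $p$.

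The heart of the argument is the identity valid on $(a,b)$,
$$-\frac{f_1'(x)}{f_2'(x)} = \left(-\frac{h_1'(x)}{h_2'(x)}\right)\cdot\frac{f_1'(x)}{h_1'(x)}\cdot\frac{h_2'(x)}{f_2'(x)},$$
together with the observation that on $(a,b)$ one also has $h_1' < 0 < h_2'$, so all three factors are positive, and each is nondecreasing. The first factor is nondecreasing because the assumed unimodality of $qh_1 + (1-q)h_2$ for every $q$ is equivalent to $-h_1'/h_2'$ being nondecreasing on the interval between the modes of $h_1$ and $h_2$, which contains $(a,b)$. The second factor is nondecreasing because $h_1 \leq_{\rm slr} f_1$ makes $h_1'/f_1'$ nonincreasing on $\{f_1'<0\}\supseteq(a,b)$, so its reciprocal $f_1'/h_1'$ is nondecreasing; the third factor is handled symmetrically from $f_2 \leq_{\rm slr} h_2$. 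A product of nonnegative nondecreasing functions is nondecreasing, and the endpoint values $R(a^+)=0$ (as $f_1'(a)=0$) and $R(b^-)=+\infty$ (as $f_2'(b)=0$) then force the single upward crossing.

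The step I expect to require the most care is upgrading ``nondecreasing'' to ``strictly increasing with no flat stretch,'' which is what separates genuine unimodality (the restricted form of Remark 3) from a flat ridge, and disposing of degenerate critical points. Here the slr relations do more than the monotonicity above: the very same reciprocal-ratio monotonicity forbids $f_1'$ (resp. $f_2'$) from vanishing anywhere in the open interval $(a,b)$, since a positive nondecreasing factor $f_1'/h_1'$ cannot dip to $0$ at an interior point; this rules out interior saddles of $f_1$ and $f_2$ and keeps the three factors strictly positive throughout. The hypothesis that $f_1'$ and $f_2'$ do not vanish simultaneously between the modes then guarantees that the level equation $-f_1'/f_2' = (1-p)/p$ is never of the degenerate $0/0$ type. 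For strictness, note that no flat ridge in the $h$-mixtures forces the first factor $-h_1'/h_2'$ to be strictly increasing (and Lemma~\ref{lem.initial} supplies the strict monotonicity underlying the slr relations where needed); the product of a strictly increasing positive function with positive nondecreasing ones is strictly increasing, so $R$ is strictly increasing. Consequently $m_p'$ crosses zero transversally at a unique point, giving unimodality of $m_p$ with a strictly negative second derivative at the mode and no saddle point, which is the strong form needed in Lemma~\ref{uni.modal}.
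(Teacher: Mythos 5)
Your proof is correct and follows essentially the same route as the paper: both reduce unimodality of $p f_1 + (1-p) f_2$ to the monotonicity of $-f_1'/f_2'$ on the interval between the modes of $f_1$ and $f_2$, and both extract that monotonicity from the two $\leq_{\rm slr}$ hypotheses together with the assumed unimodality of the $h$-mixtures. Your three-factor product $\left(-h_1'/h_2'\right)\cdot\left(f_1'/h_1'\right)\cdot\left(h_2'/f_2'\right)$ is just the multiplicative form of the paper's chain of inequalities $f_2''/f_2' \leq h_2''/h_2' \leq h_1''/h_1' \leq f_1''/f_1'$.
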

\begin{proof}
Let $x_*$ and $x^*$ denote the modes of $f_1$ and $f_2$ respectively.  In the case of a possible plateau, $x_*$ (respectively, $x^*$) denotes the leftmost (respectively, rightmost) mode of $f_1$ (respectively, $f_2$).  Obviously all modes of the mixture $p f_1 + (1-p) f_2 $ are in the interval $[x_*, x^*]$.  Moreover, for each $x_0\in (x_*, x^*)$ such that $f_i'(x_0)\neq 0,\ i=1,2$, we may set $\lambda = - f_1'(x_0)/ f_2'(x_0)$ to obtain a stationary point of this mixture density.  By the likelihood ratio ordering, we necessarily have $f_1'(x_0)<0<f_2'(x_0)$ and $\lambda >0$.  To show that the mixture is unimodal, suppose $f_2'$ does not vanish on $(x_*, x^*)$.  Then we can show that $-f_1'(x)/f_2'(x)$ increases on $(x_*, x^*)$, which is equivalent to 
\begin{equation}
\label{mix.cond}
f_1''(x_0) f_2'(x_0) \leq f_2''(x_0) f_1'(x_0),\quad x_0\in (x_*, x^*).
\end{equation}
Condition (\ref{mix.cond}) is necessary because, if the mixture is unimodal, then a stationary point can never be a local minimum, and hence $f_1''(x_0) +\lambda f_2''(x_0) \leq 0$.  On the other hand, if $\lambda = - f_1'(x_0)/f_2'(x_0)$ is an increasing function of $x_0\in (x_*, x^*)$, then stationary points of the mixture corresponding to the same $\lambda$ form a connected interval, showing that the mixture is unimodal.  A close inspection shows that (\ref{mix.cond}) is sufficient as long as the saddle points of $f_1$ and $f_2$ on $(x_*, x^*)$ do not coincide. 

Applying this criterion to the mixture $ph_1 +(1-p) h_2 $ we have
$$h_1''(x_0) h_2'(x_0) \leq h_2''(x_0) h_1'(x_0),\quad x_0\in (x_*, x^*).$$
which yields, as long as $f_1'(x_0)\neq 0\neq f_2'(x_0)$, 
$$ \frac{f_2''(x_0)}{f_2'(x_0)}\leq \frac{h_2''(x_0)}{h_2'(x_0)}\leq \frac{h_1''(x_0)}{h_1'(x_0)} \leq \frac{f_1''(x_0)}{f_1'(x_0)},\quad x_0\in (x_*, x^*),$$
in view of  $h_1\leq_{\rm slr} f_1$ and $f_2\leq_{\rm slr} h_2$, and (\ref{mix.cond}) is established. 
\end{proof}
Next, we present two log-concavity results needed in the proof of Lemma~\ref{uni.modal}.
\begin{lemma}
\label{lem.lc1}
Suppose $X_i\sim {\rm gamma}(\alpha)$ and $Z_i\sim {\rm expo}(1),\ i=1,2$, are mutually independent where $\alpha\geq 1$.  Let $\delta_1, \delta_2 >0$.  Then arbitrary mixtures of $\delta_1 X_1 + \delta_2 X_2$ and $\delta_1 (X_1 + Z_1) + \delta_2(X_2 + Z_2)$ are unimodal. 
\end{lemma}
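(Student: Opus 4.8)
The plan is to collapse the statement to a single monotonicity condition and feed it from the one-weight case. The starting point is the observation that, since $X_i\sim g_\alpha$ and $Z_i\sim\mathrm{expo}(1)$ both have unit rate, $X_i+Z_i\sim g_{\alpha+1}$; hence $\delta_1(X_1+Z_1)+\delta_2(X_2+Z_2)$ has the same law as $\delta_1 V_1+\delta_2 V_2$ with $V_i$ i.i.d.\ $\mathrm{gamma}(\alpha+1)$. Writing $f_A$ for the density of $\delta_1 X_1+\delta_2 X_2$ and $f_B$ for that of $\delta_1 V_1+\delta_2 V_2$, the lemma becomes the two-weight analogue of Lemma~\ref{lem3}: every mixture of two equally weighted gamma pairs whose shapes differ by one is unimodal when $\alpha\geq1$. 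As in the analysis behind Proposition~\ref{comp.prop}, unimodality of $pf_A+(1-p)f_B$ for all $p$ is equivalent to $-f_A'/f_B'$ being monotone between the modes, i.e.\ $f_A''f_B'\le f_B''f_A'$ on the interval $(x_*,x^*)$ bounded by the modes of $f_A$ and $f_B$; outside that interval $f_A'$ and $f_B'$ share a common sign, so no stationary point of the mixture can occur there. First I would record $f_A\le_{\mathrm{lr}}f_B$, which holds because $f_B=f_A*k$ with $k$ the log-concave density of $\delta_1 Z_1+\delta_2 Z_2$, thereby ordering the modes as $x_*\le x^*$.

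To attack the interior monotonicity I would try to import it from the one-weight statement. The clean gain is that the \emph{adjacent} mixtures factor through Lemma~\ref{lem3}: introducing the intermediate density $f_M$ of $\delta_1 V_1+\delta_2 X_2$ (shapes $(\alpha+1,\alpha)$), one has
$$pf_A+(1-p)f_M=\bigl[\delta_1\bigl(p\,g_\alpha+(1-p)g_{\alpha+1}\bigr)\bigr]*(\delta_2 g_\alpha),$$
which is a unimodal density (Lemma~\ref{lem3}, $\alpha\geq1$) convolved with the log-concave kernel $\delta_2 g_\alpha$, hence unimodal by Ibragimov's strong-unimodality theorem; the same holds for $pf_M+(1-p)f_B$. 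Together with the orderings $f_A\le_{\mathrm{slr}}f_M\le_{\mathrm{slr}}f_B$, obtained from $g_\alpha\le_{\mathrm{slr}}g_{\alpha+1}$ (valid for $\alpha\geq1$; cf.\ Proposition~\ref{slr.simple}) via the convolution-invariance of Proposition~\ref{slr.prop}, this controls the flanks: on $\{f_A'>0\}$ and on $\{f_B'<0\}$ the ratios $f_A''/f_A'$, $f_M''/f_M'$, $f_B''/f_B'$ are correctly nested.

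The main obstacle is the between-modes region, which $\le_{\mathrm{slr}}$ does not govern. One cannot circumvent it by squeezing $f_A$ and $f_B$ between single gamma densities and invoking Proposition~\ref{comp.prop}: any such pair of references lives at the two distinct scales $\delta_1\le\delta_2$, and when $\delta_1/\delta_2$ is small their mixture splits into two bumps and is \emph{not} unimodal, so the hypothesis of Proposition~\ref{comp.prop} fails. Moreover, chaining the two adjacent conditions through $f_M$ does not suffice either, since on the left flank the $\le_{\mathrm{slr}}$ comparison between $f_M$ and $f_B$ runs in the wrong direction to be combined with the condition for $(f_A,f_M)$. The two-weight structure is thus genuinely irreducible, and the interior estimate $f_A''f_B'\le f_B''f_A'$ on $(x_*,x^*)$ must be established directly.

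For that direct step I would use the size-bias identity $g_{\alpha+1}(x)=(x/\alpha)g_\alpha(x)$ together with the second-order ODE satisfied by a two-gamma convolution, read off from $\widehat f_A(t)=(1+\delta_1 t)^{-\alpha}(1+\delta_2 t)^{-\alpha}$ and its $\alpha+1$ counterpart, to rewrite the Wronskian-type inequality $f_A'f_B''-f_B'f_A''\ge0$ as a polynomial sign condition in $x$; the intermediate $f_M$ and the already-established adjacent unimodalities would then be used to fix the signs at the endpoints $x_*,x^*$. The hypothesis $\alpha\geq1$ is decisive throughout: it is exactly the threshold at which Lemma~\ref{lem3} yields a unimodal one-weight mixture and at which $g_\alpha$ is log-concave, and I expect the polynomial sign condition to reverse precisely at $\alpha=1$, mirroring the $\alpha<1$ failure exhibited earlier.
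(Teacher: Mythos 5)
There is a genuine gap, and it sits exactly where you say it does: the interior estimate $f_A''f_B'\le f_B''f_A'$ between the modes of $f_A$ and $f_B$ is the whole content of the lemma for the non-adjacent pair, and your proposal does not prove it --- it only announces a plan (``rewrite the Wronskian-type inequality as a polynomial sign condition and fix the signs at the endpoints''). That plan is not obviously executable: $f_A$ and $f_B$ are confluent-hypergeometric--type functions satisfying \emph{different} second-order ODEs, so the cross-Wronskian $f_A'f_B''-f_B'f_A''$ does not satisfy a clean first-order equation, and knowing its sign at $x_*$ and $x^*$ does not control its sign in between without further structure. The parts you do establish (the reduction of all-$p$ unimodality to monotonicity of $-f_A'/f_B'$, the likelihood-ratio ordering, and the unimodality of the adjacent mixtures $pf_A+(1-p)f_M$ and $pf_M+(1-p)f_B$ via Lemma~\ref{lem3} plus strong unimodality) are correct but, as you concede yourself, do not reach the diagonal pair $(f_A,f_B)$. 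So the proof is incomplete at its crux.

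The device you are missing is a reduction in $\alpha$ rather than in the number of weights. Write $X_i=X_i^*+Y_i$ with $X_i^*\sim{\rm expo}(1)$ and $Y_i\sim{\rm gamma}(\alpha-1)$; then any mixture in the lemma is the convolution of the corresponding $\alpha=1$ mixture with the fixed unimodal density of $\delta_1Y_1+\delta_2Y_2$. For $\alpha=1$ the densities are elementary, $h_1(x)=(e^{-x}-e^{-x/\delta})/(1-\delta)$ and an explicit $h_2$, and a direct computation shows that every mixture $ph_1+(1-p)h_2$ is \emph{log-concave} (one checks $q'^2-q''q>0$ for $q(x)=x(e^{\epsilon x}+1)+\lambda(e^{\epsilon x}-1)$). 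Strong unimodality (log-concave convolved with unimodal is unimodal) then finishes the general case. Note that the stronger conclusion of log-concavity in the exponential case is what makes the convolution step work; aiming only for unimodality of the $\alpha=1$ mixture would not suffice, since unimodality is not preserved under convolution with a merely unimodal factor.
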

\begin{proof}
We show that when $\alpha=1$, such mixtures are log-concave.  If $\alpha >1$ then we can write $\sum_{i=1}^2 \delta_i X_i = \sum_{i=1}^2 \delta_i (X_i^*  + Y_i) $ where $X_i^*\sim {\rm expo}(1)$ and $Y_i \sim {\rm gamma}(\alpha -1)$ independently.  We can similarly ``split off'' $\delta_1 Y_1 + \delta_2 Y_2$ from $\sum_{i=1}^2 \delta_i (X_i + Z_i)$.  Because $\delta_1 Y_1 + \delta_2 Y_2$ is unimodal, the result follows from the log-concave result in the $\alpha=1$ case. 

Let us assume $\delta_2=1$ and $\delta\equiv \delta_1\in (0,1)$.  When $\alpha=1$, the densities of $\delta X_1 + X_2$ and $\delta (X_1 +Z_1) + X_2 +Z_2$ are, respectively, 
$$h_1(x) = \frac{e^{-x} - e^{-x/\delta }}{1-\delta}; \quad h_2(x) = \frac{x(e^{-x}+ e^{-x/\delta})-2\delta h_1(x)}{(1-\delta)^2}.$$
For $\lambda > -2\delta$ and $\epsilon \equiv \delta^{-1} -1$ let 
$$q(x)\equiv x(e^{\epsilon x} + 1) +\lambda (e^{\epsilon x}  - 1).$$  
We only need to show that $q(x)$ is log-concave.  A quick calculation yields 
$$e^{-\epsilon x}\left[q'^2(x)  - q''(x) q(x)\right] = e^{\epsilon x} + e^{-\epsilon x} -2 - (\epsilon x)^2 + (\lambda \epsilon + 2)^2 $$
which is positive for all $x>0$.
\end{proof}
\begin{lemma}
\label{lem.lc2}
Let $Y$ be an arbitrary mixture of $k\geq 1$ exponentials with means $\delta_i,\ i=1,\ldots, k$ such that $\max(\delta_i)\leq \delta$.  Let $Z\sim {\rm expo}(\delta)$ independently of $Y$.  Then $Y+Z$ is strictly log-concave. 
\end{lemma}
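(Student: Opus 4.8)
The plan is to write the density $f$ of $Y+Z$ as a convolution, extract a log-linear factor, and thereby reduce strict log-concavity of $f$ to the ordinary concavity of a simple auxiliary function. Writing $f_Y(y)=\sum_{i=1}^k p_i\,\delta_i^{-1}e^{-y/\delta_i}$ for the mixture density of $Y$ (with $p_i\ge 0$, $\sum_i p_i=1$) and convolving with the $\mathrm{expo}(\delta)$ density $\delta^{-1}e^{-z/\delta}$, I would obtain
\[
f(x)=\frac{e^{-x/\delta}}{\delta}\,\phi(x),\qquad \phi(x)\equiv\int_0^x f_Y(y)\,e^{y/\delta}\,\mathrm{d}y,\quad x>0.
\]
Because $\log f(x)=\mathrm{const}-x/\delta+\log\phi(x)$ and the term $-x/\delta$ is linear (hence irrelevant to concavity), it suffices to prove that $\phi$ is positive and strictly log-concave on $(0,\infty)$.

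Next I would establish that $\phi$ is positive, increasing, and concave. By the fundamental theorem of calculus $\phi'(x)=f_Y(x)\,e^{x/\delta}>0$, so $\phi$ increases from $\phi(0)=0$ and is strictly positive on $(0,\infty)$. Differentiating once more gives
\[
\phi''(x)=e^{x/\delta}\Bigl(f_Y'(x)+\tfrac1\delta f_Y(x)\Bigr)=-\,e^{x/\delta}\sum_{i=1}^k\frac{p_i}{\delta_i}\Bigl(\frac1{\delta_i}-\frac1\delta\Bigr)e^{-x/\delta_i}.
\]
This is the step where the hypothesis $\max_i\delta_i\le\delta$ enters decisively: each factor $1/\delta_i-1/\delta$ is nonnegative, so $\phi''\le 0$ and $\phi$ is concave.

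Finally I would combine these observations through $(\log\phi)''=(\phi''\phi-(\phi')^2)/\phi^2$: since $\phi>0$, $\phi''\le 0$, and $\phi'(x)=f_Y(x)e^{x/\delta}>0$ strictly, the numerator is strictly negative, so $(\log\phi)''<0$ on $(0,\infty)$ and $Y+Z$ is strictly log-concave. The only delicate point — and the main obstacle to watch — is the strictness. When all $\delta_i=\delta$ one has $\phi''\equiv 0$, so concavity of $\phi$ is not strict and does not by itself yield strict log-concavity; what saves the argument is that $(\phi')^2>0$ holds for every $x>0$ regardless of the $\delta_i$, which forces $(\log\phi)''<0$ even in this degenerate case. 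Consequently no separate treatment of equal means is required, and strict log-concavity holds throughout $(0,\infty)$.
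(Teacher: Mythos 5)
Your proof is correct and follows essentially the same route as the paper: both factor out $e^{-x/\delta}$ and observe that the residual function (your $\phi$, the paper's $e^{u/\delta}h(u)$, identical up to a constant) is concave precisely because $\max_i\delta_i\le\delta$. Your derivation of strictness from $(\phi')^2>0$ rather than from $\phi''<0$ is a small but genuine tidying-up, since it absorbs the boundary case $\max_i\delta_i=\delta$ that the paper dismisses with ``a slight modification.''
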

\begin{proof}
Suppose $\max(\delta_i)<\delta$.  The density of $U\equiv Y+Z$ can be written as 
$h(u) = \sum_{i=1}^k \lambda_i (e^{-u/\delta} - e^{-u/\delta_i}) $ for some constants $\lambda_i>0$.  We know $h(u)$ is strictly log-concave on $(0, \infty)$ because $e^{u/\delta} h(u)$ is strictly concave.  A slight modification works when $\max(\delta_i) =\delta$. 
\end{proof}

We are now ready to present the proof of Lemma~\ref{uni.modal}, which concludes the derivation of our main result. 
\begin{proof}[Proof of Lemma~\ref{uni.modal}]
We shall use the notation $\leq_{\rm slr}$ with the random variables as well as the densities.  Let $Y_1$ be an arbitrary mixture of $\theta_j Z_j$ for $j=1,\ldots, k-1$; let $Y_2$ be an arbitrary mixture of $\theta_j Z_j$ for $j=k+2, \ldots, n$.  Then $Y_1\leq_{\rm lr} \theta_k Z_k\leq_{\rm lr} Y_2$, and these have strictly decreasing densities.  Define 
$$\tilde{X}\equiv \theta_n X_n + \theta_{k+1} X_{k+1},\quad W_1 \equiv \tilde{X} + \theta_k Z_k + Y_1,\quad W_2 \equiv \tilde{X} +\theta_{k+1} Z_{k+1} + Y_2.$$  
We have $W_1\leq_{\rm lr} W_2$, and $W_1, W_2$ are unimodal (since $\alpha\geq 1$); $W_1$ is in fact log-concave by Lemma~\ref{lem.lc2}.  If we can show that arbitrary mixtures of $W_1$ and $W_2$ are unimodal, then so are those of $f_j,\ j=1,\ldots, k-1, k+2,\ldots, n,$ by adding $\sum_{i\neq k+1, n} \theta_i X_i$, which is log-concave.  

Lemma~\ref{lem.initial} yields $\theta_n X_n + Y_2 \leq_{\rm slr} \theta_n (X_n + Z_n)$. Convolving both sides with $\theta_{k+1} (X_{k+1} + Z_{k+1})$, which is PF3 (Karlin 1968), we obtain 
$$W_2\leq_{\rm slr} \theta_{k+1}(X_{k+1}+Z_{k+1})+\theta_n (X_n +Z_n).$$
Lemma~\ref{lem.initial} also yields
\begin{equation}
\label{slr.ineqs}
\theta_n(X_n+Z_n) \leq_{\rm slr} \theta_n(X_n+Z_n) + \theta_k Z_k  \leq_{\rm slr} 
 \theta_n(X_n+Z_n) + \theta_k Z_k + Y_1,
 \end{equation}
where the second $\leq_{\rm slr}$ is obtained by convolving $\theta_n(X_n+Z_n) \leq_{\rm slr} \theta_n(X_n+Z_n) + Y_1$ with $\theta_k Z_k$.  Convolving (\ref{slr.ineqs}) with $\theta_{k+1}(X_{k+1}+Z_{k+1})$ yields
$$W_2\leq_{\rm slr}  W_3\equiv \theta_{k+1}(X_{k+1}+Z_{k+1})+\theta_n (X_n +Z_n) + \theta_k Z_k + Y_1.$$
By Proposition ~\ref{comp.prop}, we only need to show that arbitrary mixtures of $W_1$ and $W_3$ are unimodal.  But this is a consequence of Lemma~\ref{lem.lc1}, which shows that arbitrary mixtures of $\tilde{X}$ and $ \theta_{k+1} (X_{k+1}+Z_{k+1}) + \theta_n (X_n + Z_n)$ are unimodal, and Lemma~\ref{lem.lc2}, which shows that $\theta_k Z_k + Y_1$ is log-concave. 


Strict unimodality, in the sense of a strictly negative second derivative at the mode, and no saddle points, can be established by a careful examination of the above steps.  For example, in Proposition~\ref{comp.prop}, the claim still holds if we use strict unimodality in both the condition on $h_i$ and the conclusion on $f_i$.  Also, in addition to being unimodal, the density of a mixture of $\tilde{X}$ and $ \theta_{k+1} (X_{k+1}+Z_{k+1}) + \theta_n (X_n + Z_n)$ is analytic on $(0, \infty)$, vanishes at $0+$, and has a bounded first derivative.  One can then verify that the step of adding $\theta_k Z_k +Y_1$, which is strictly log-concave, yields a strictly unimodal density. 
\end{proof}

\end{document}